  \documentclass[sn-mathphys,Numbered]{sn-jnl}% Math and Physical Sciences Reference Style
% \documentclass[default]{sn-jnl} 
% \documentclass[sn-aps]{sn-jnl}% American Physical Society (APS) Reference Style
% \documentclass[sn-vancouver,Numbered]{sn-jnl}% Vancouver Reference Style
% \documentclass[sn-apa]{sn-jnl}% APA Reference Style 
% \documentclass[sn-chicago]{sn-jnl}% Chicago-based Humanities Reference Style
% \documentclass[default]{sn-jnl}% Default
%%\documentclass[default,iicol]{sn-jnl}% Default with double column layout

%%%% Standard Packages
%%<additional latex packages if required can be included here>
\usepackage{graphicx}%
\usepackage{multirow}%
\usepackage{amsmath,amssymb,amsfonts}%
\usepackage{amsthm}%
\usepackage{mathrsfs}%
\usepackage[title]{appendix}%
\usepackage{xcolor}%
\usepackage{textcomp}%
\usepackage{manyfoot}%
\usepackage{booktabs}%
\usepackage{algorithm}%
\usepackage{algorithmicx}%
\usepackage{algpseudocode}%
\usepackage{listings}%
\usepackage{orcidlink}
% \usepackage[numbers]{natbib}

%%%%

%%%%%=============================================================================%%%%
%%%%  Remarks: This template is provided to aid authors with the preparation
%%%%  of original research articles intended for submission to journals published 
%%%%  by Springer Nature. The guidance has been prepared in partnership with 
%%%%  production teams to conform to Springer Nature technical requirements. 
%%%%  Editorial and presentation requirements differ among journal portfolios and 
%%%%  research disciplines. You may find sections in this template are irrelevant 
%%%%  to your work and are empowered to omit any such section if allowed by the 
%%%%  journal you intend to submit to. The submission guidelines and policies 
%%%%  of the journal take precedence. A detailed User Manual is available in the 
%%%%  template package for technical guidance.
%%%%%=============================================================================%%%%

%\jyear{2021}%

%% as per the requirement new theorem styles can be included as shown below
\theoremstyle{thmstyleone}%
\newtheorem{theorem}{Theorem}%  meant for continuous numbers
%%\newtheorem{theorem}{Theorem}[section]% meant for sectionwise numbers
%% optional argument [theorem] produces theorem numbering sequence instead of independent numbers for Proposition
\newtheorem{proposition}[theorem]{Proposition}% 
\newtheorem{lemma}{Lemma}%  
\newtheorem{corollary}{Corollary}% 

\theoremstyle{thmstyletwo}%
\newtheorem{remark}{Remark}%

\theoremstyle{thmstylethree}%
\newtheorem{definition}{Definition}%

\raggedbottom
%%\unnumbered% uncomment this for unnumbered level heads

%%%new commands
\newcommand{\<}{\langle}
\renewcommand{\>}{\rangle}

\begin{document}

\title[Article Title]{One-sided measure theoretic elliptic operators and applications to SDEs driven by Gaussian white noise with atomic intensity.}

%%=============================================================%%
%% Prefix	-> \pfx{Dr}
%% GivenName	-> \fnm{Joergen W.}
%% Particle	-> \spfx{van der} -> surname prefix
%% FamilyName	-> \sur{Ploeg}
%% Suffix	-> \sfx{IV}
%% NatureName	-> \tanm{Poet Laureate} -> Title after name
%% Degrees	-> \dgr{MSc, PhD}
%% \author*[1,2]{\pfx{Dr} \fnm{Joergen W.} \spfx{van der} \sur{Ploeg} \sfx{IV} \tanm{Poet Laureate} 
%%                 \dgr{MSc, PhD}}\email{iauthor@gmail.com}
%%=============================================================%%

\author[1]{\fnm{Alexandre} \sur{B. Simas} \orcidlink{0000-0003-2562-2829}}\email{alexandre.simas@kaust.edu.sa}
\equalcont{The authors contributed equally and were listed in alphabetical order.}
\author*[1,2]{\fnm{Kelvin} \sur{J. R. Sousa} \orcidlink{0000-0002-9644-9819}}\email{kelvinjhonson.silva@kaust.edu.sa}

\affil[1]{\orgdiv{Statistics Program, Computer, Electrical and Mathematical Science and Engineering Division}, \orgname{King Abdullah University of Science and Technology (KAUST)}, \orgaddress{\city{Thuwal}, \postcode{23955-6900}, \country{Kingdom of Saudi Arabia}}}

\affil[2]{\orgdiv{Department of Mathematics}, \orgname{Universidade Federal da Paraíba (UFPB)}, \orgaddress{\city{João Pessoa}, \postcode{58059-900}, \state{Paraíba}, \country{Brazil}}}

%%==================================%%
%% sample for unstructured abstract %%
%%==================================%%

\abstract{We define the operator \(D^+_VD^-_W:=\Delta_{W,V}\) on the one-dimensional torus \(\mathbb{T}\). Here, $W$ and \(V\) are functions inducing (possibly atomic) positive Borel measures on \(\mathbb{T}\), and the derivatives are generalized lateral derivatives. For the first time in this work, the space of test functions \(C^{\infty}_{W,V}(\mathbb{T})\) emerges as the natural regularity space for solutions of the eigenproblem associated with \(\Delta_{W,V}\). Moreover, these spaces are essential for characterizing the energetic space \(H_{W,V}(\mathbb{T})\) as a Sobolev-type space. By observing that the Sobolev-type spaces \(H_{W,V}(\mathbb{T})\) with additional Dirichlet conditions are reproducing kernel Hilbert spaces, we introduce the so-called \(W\)-Brownian bridges as mean-zero Gaussian processes with associated Cameron-Martin spaces derived from these spaces. This framework allows us to introduce \(W\)-Brownian motion as a Feller process with a two-parameter semigroup and c\`adl\`ag sample paths, whose jumps are subordinated to the jumps of \(W\). We establish a deep connection between \(W\)-Brownian motion and these Sobolev-type spaces through their associated Cameron-Martin spaces. Finally, as applications of the developed theory, we demonstrate the existence and uniqueness of related deterministic and stochastic differential equations.}

\keywords{Generalized Laplacian, Regularity, Generalized Sobolev Spaces, Generalized Brownian Motion, two-parameter semigroup.}

%%\pacs[JEL Classification]{D8, H51}

%%\pacs[MSC Classification]{35A01, 65L10, 65L12, 65L20, 65L70}

\maketitle

\section{Introduction}\label{sec1}

Generalized differential operators have emerged as powerful mathematical tools with significant theoretical importance across various domains of mathematics. Our focus centers on the differential operator $D_{W}:=\frac{d}{dW}$, defined with respect to a strictly increasing function $W:\mathbb{R}\to\mathbb{R}$ that is right-continuous (or left-continuous) satisfying a suitable periodic conditions. This operator's significance has been demonstrated in numerous works \cite{frag, pouso, wsimas, trig, uta} and \cite{franco}, particularly in the context of impulsive differential equations, dynamic equations on time scales, and diffusion processes in inhomogeneous media.

% Before introducing the one-sided second order elliptic differential equation let us first discuss some of the existing models. As discussed in \cite{pouso} if $V$ is a c\adl\ag nondecreascing function, the solutions (in the sense of Carathéodory) of 
% 	\begin{equation}\label{apl:1}
% 		x'_{V}(t)=f(t,x(t))
% 	\end{equation}
% 	can be viewed as solutions of differential equations with impulses or a dynamic equation on time scaling depending on the function $V$. The author developed the necessary machinery to study (\ref{apl:1}) in the context of Carathéodory, by introducing the concept of $V$-absolutely continuous functions.
% On the other side, in \cite{franco} the authors studied the solutions of 
% 	\begin{equation}\label{apl:2}
% 		\partial_{t}\rho=D_{x}D_{W}\Phi(\rho),
% 	\end{equation}
% 	which was naturally obtained as a hydrodynamic limit of interacting particle systems in inhomogeneous
% 	media, which was interpreted as existence of membranes". More precisely, 
% 	they can interpret the model as hydrodynamic limit of diffusions with permeable membranes, in which the discontinuity points of $W$ tend to reflect particles, creating space discontinuities in the solutions. 

The usage of generalized differential operator traces back to the seminal paper \cite{feller}, where the author motivates such a generalized derivatives an alternative to symmetrize one dimensional problems involving the convection-diffusion operator. More precisely the author notes that it is possible to reduce the non self-adjoint operator $\mathfrak{U}=aD^{2}_{s}+bD_{s}+c$, where $a>0, b,c\in\mathbb{R}$, $D_{s}=\frac{d}{ds}$ on $s\in (s_{1}, s_{2})$ to the self-adjoint operator $\mathfrak{U}=-D_{x}D_{W} + c$, where $x\in (x_{1},x_{2})$ and $W$ is a continuous strictly increasing function of $x$. In the same work, the author highlights that $D_{W}$ still makes sense if we just require $W$ to be strictly increasing. Due to the possible discontinuities of $W$ (always of first kind), in general, solutions of equations involving $D_{W}$ possibly have discontinuities of first kind or, informally, have ``jumps'', which might be dense in the interval we are considering $D_{W}$. For that, it is enough, for instance, to require the discontinuity points of $W$ to be dense on its interval of definition. This last observation ensures that regular functions with respect to such derivatives will, most likely, have discontinuity points, and might be very irregular from the standard regularity viewpoint. We also would like to mention that the reduction from the non self-adjoint operator to a self-adjoint operator is very important since it is much more practical to do spectral theory on self-adjoint operators.

From past decades until the present moment, generalized differential equations has been naturally derived to address several interesting problems. For instance, the equation
\begin{equation}\label{apl:2}
\rho_{t}-D_{x}D_{W}\phi(\rho)=0,
\end{equation}
where $D_x$ is the usual differential operator and $D_W$ is a differential operator with respect to a right-continuous and strictly increasing function $W$, arises in \cite{franco}, in the context of hydrodinamic limit of interating particle systems, to model diffusions on permeable membranes.

Another case where such generalized derivative operator plays an important role, arises when considering generalized ordinary differential equations of type 
\begin{equation}\label{apl:1}
x'_{V}(t)=f(t,x(t)),
\end{equation}
which was studied in \cite{pouso}, where $V$ is a càdlàg nondecreasing function. These solutions can be interpreted as solutions of differential equations with impulses or dynamic equations on time scales, depending on the function $V$. This interpretation led to the development of $V$-absolutely continuous functions as a theoretical framework to study (\ref{apl:1}) in the sense of Carathéodory.

Note that the second-order differential operator $\frac{d}{dx}\left(\frac{d}{dW}\right)$ introduced in \cite{frag} has one of its derivatives with respect to $W$, which in this context is a function that may have jumps. Since we are dealing with functions that are differentiable with respect to $W$, typically, we will look for functions that are right-continuous and have left-limits. On the other hand, it is noteworthy that the differential operator $\frac{d}{dW}\left(\frac{d}{dx}\right)$ introduced by Feller in his seminal paper \cite{feller} requires an initial space contained on the space of continuous functions. This is due to the requirement of the existence of the first derivative in the strong sense (and the derivatives being standard bilateral derivatives, not only lateral derivatives). One should note that all the aforementioned operators generalize the usual one-dimensional Laplacian operator $\frac{d}{dx}\left(\frac{d}{dx}\right)$, which is the case when $W(x)=x$. 

In \cite{wsimas} and \cite{simasvalentim2} the authors introduced the $W$-Sobolev spaces by showing that the generalized $W$-derivative can be seen in some sense as weak derivative and obtained some elliptic regularity results related to solutions of generalized linear elliptic equations. However, as will also be discussed in Section \ref{sect4}, the usage of strong derivatives, meaning that the standard limit is taken in the definition of the derivative, instead of just lateral limits, had a serious impact on the regularity study of such operators. Indeed, the elements in the space of regular functions on the $W$-Sobolev spaces, that arise from eigenvectors of the operators, which should be considered smooth, cannot be evaluated pointwisely. This shows the necessity for an adaptation of this theory in such a way that we create a new type of $W$-Sobolev space  (which here will be more general, and we will call $W$-$V$-Sobolev spaces), with good regularity properties so that, one not only has pointwise evaluations, but can also apply the lateral derivatives in the classical sense. Moreover, it also agrees with the $W$-Sobolev spaces in \cite{wsimas} and also with the standard Sobolev spaces, see Remark \ref{sobagree}. 

The first goal of this paper is to consider a general one-sided second order differential operator in such a way that it generalizes at the same time both differential operators we mentioned above, namely $\frac{d}{dW}\left(\frac{d}{dx}\right)$ and $\frac{d}{dx}\left(\frac{d}{dW}\right)$. Indeed, we explain on Remark \ref{difop} how we can re-obtain these differential operators by just imposing regularity conditions on their ranges. Secondly, given the generalized Laplacian $\Delta_{W,V}$, introduced in Definition \ref{laplacianWV}, we define the space $C^{\infty}_{W,V}(\mathbb{T})$ and show that this space contains all solutions of the problems (whenever solutions exist) 
	\begin{equation}\label{eq:1}
		\begin{cases}
			-\Delta_{W,V}\phi = \lambda\phi,\,\lambda\in\mathbb{R}\setminus\{0\};\\
			\phi\in\mathcal{D}_{W,V}(\mathbb{T}),
		\end{cases}
	\end{equation}
where $\mathcal{D}_{W,V}(\mathbb{T})$ is the domain of the Friedrics Extension of the operator $I-D^{+}_{V}D^{-}_{W}$ (cf. Definition \ref{defi5friedri}). To the best of our knowledge, such an explicit characterization can not be found elsewhere in the literature. Moreover, following the ideas of \cite{wsimas} and with the help of the adjoint space $C^{\infty}_{V, W}(\mathbb{T})$ (we would like to ask the reader to pay special attention to the subscripts since they determine in which order the derivatives are taken), we introduce the new $W$-$V$-Sobolev space $H_{W,V}(\mathbb{T})$ and provide some different characterizations for it. The importance of why we should consider the space $C^{\infty}_{W,V}(\mathbb{T})$ comes from the regularity problem associated to the eigenvectors determined by the operator $\mathcal{L}_{W}$ studied in \cite{franco}. Indeed, one expects that each of them (almost surely) inherits, at least, the regularity of the initial space $\mathfrak{D}_{W}$. However, this is not true in general. We refer to the introduction of Section \ref{sect4} to a detailed explanation about the regularity problem.

We, then, apply the theory developed to provide some existence and uniqueness results related to the one-sided second order elliptic operator $L_{W,V}:= -D^{+}_{V}AD_{W}^{-}+\kappa^2$
where $A$ and $\kappa$ satisfies suitable conditions. Further details on the motivation and importance to study $L_{W,V}$ can be found in Section \ref{sect6}.

The remainder of the paper unfolds as follows. In Section \ref{sec:onesideddiff} the one-sided differential operators are introduced and their main properties are derived. Their associated Sobolev-type spaces are introduced in Section \ref{sec:wvsob}. The space of smooth functions, along with their associated aforementioned results are provided in Section \ref{sect4}. Section \ref{sec:wvweak} connects the Sobolev-type spaces to the standard Sobolev spaces by means of weak derivatives, along with results related to approximation by smooth functions. Second-order elliptic equations are studied on Section \ref{sect6}.
The Section \ref{sect7} is dedicated to introduce and study the so called $W$-Brownian bridges, which are mean-zero Gaussian processes with associated Cameron-Martin spaces given by 
$$H_{W,V,\mathcal{D}}(\mathbb{T}) := \{f\in H_{W,V}(\mathbb{T}): f(0) = 0\},$$
where $0\in \mathbb{T}$ any point of $\mathbb{T}$ that is kept fixed (we call this a ``tagged'' point in $\mathbb{T}$). The identification of the Cameron-Martin process of the $W$-Brownian bridge allows us to prove that they are Gaussian Markov random fields.
This further allows us to construct the $W$-Brownian motion on any interval $[0,T]$, $T>0$, and show that it is a Feller process induced by a two-parameter semigroup, which in turn allows us to derive several important probabilistic properties of the $W$-Brownian motion. Finally, we obtain the Cameron-Martin spaces associated to the $W$-Brownian motions in terms of these Sobolev-type spaces, showing a deep connection between the $W$-Brownian motions and the theory developed here. Further, we also show that this Cameron-Martin spaces can be embedded $L_V^2(\mathbb{T})$ for any $V$ satisfying appropriate assumptions. Finally, in Section \ref{sect8}, we establish an existence and uniqueness result on solutions of a Matérn-like elliptic SPDE, namely we consider the equation $L_{W,V}u=B_{W}$, which provides a nice interrelationship between the deterministic and the the non-deterministic tools presented in this work.

\section{The one-sided differential operator}\label{sec:onesideddiff}

In this section we begin by introducing the one-sided differential operators in the classical sense, acting locally on functions. Then, we use it to define a strong second-order differential operator that extends the classical second derivative for a certain class of functions. After that, 
	we show that we can extend this second-order differential operator to a self-adjoint operator which extends the one-dimensional Laplacian. This operator turns out to have a compact resolvent. 
	
Let us introduce some notation. First, we say that a function is c\`adl\`ag (from french, ``continue à droite, limite à gauche'') if the function is right-continuous and has limits from the left. Similarly, we say that a function is c\`agl\`ad, if the function is left-continuous and has limits from the right. 

Let $W, V : \mathbb{R}\to\mathbb{R}$ be strictly increasing functions that are, respectively, c\`adl\`ag and c\`agl\`ad. Further, we assume that they are periodic in the sense that 
	\begin{equation}\label{periodic}
		\forall x \in \mathbb{R},\quad 
		\begin{cases}
			W(x+1)-W(x)=W(1)-W(0);\\
			V(x+1)-V(x)=V(1)-V(0). 
		\end{cases}
	\end{equation}
	
	Without loss of generality we will assume that $W(0-)=W(0)=0$ and $V(0-)= V(0)=0$. %So, worth to notice that we assume the existence at least one point of discontinuity for both $W$ and $V$.
	Indeed, since they are increasing, they can only have countably many discontinuities points, so if they are discontinuous at zero, we can simply choose another point in which both of them are continuous and perform a translation on the functions to make them continuous at zero. Note that $W$ and $V$ induce finite measures on $\mathbb{T}=\mathbb{R}/\mathbb{Z}$. Additionally, note that we did not impose any restrictions regarding the continuity $W$ or $V$, so throughout the whole work the induced measures are allowed to have atoms. This is a weaker assumption than some of the common assumptions found in literature, e.g., \cite{trig}, \cite{uta},  among others. 
	
	We will denote the $L^2$ space with respect to the measure induced by $V$ by $L^2_V(\mathbb{T})$ and its norm (resp. inner product) by $\|\cdot\|_V$ (resp. $\langle \cdot,\cdot\rangle_V$). Similarly, we will denote the $L^2$ space with the measure induced by $W$ by $L^2_W(\mathbb{T})$ and its norm (resp. inner product) by $\|\cdot\|_W$ (resp. $\langle \cdot,\cdot\rangle_W$).
    
    In what follows, we will define an important class of one-sided differential operators. We begin by providing their definitions in the classical sense, meaning that we see them as pointwise lateral limits, thus they act locally on functions. 
	
	\begin{definition}\label{defgendif}
		We say that a function $f : \mathbb{T}\to\mathbb{R}$ is $W$-left differentiable if  
		$$D^{-}_{W}f(x):=\lim_{h\to 0^{-}}\frac{f(x+h)-f(x)}{W(x+h)-W(x)}$$
		exists for all $x\in \mathbb{T}$. Similarly, we say that a function $g : \mathbb{T}\to\mathbb{R}$ is $V$-right differentiable if the limit 
		$$D^{+}_{V}g(x):=\lim_{h\to 0^{+}}\frac{g(x+h)-g(x)}{V(x+h)-V(x)}$$
		exists for all $x\in \mathbb{T}$.
	\end{definition}

The above definition of generalized lateral derivatives, besides having its natural appeal as a straightforward generalization of Newton's quotient, is also %naturally obtained
similar to the derivative operator when differentiating point processes with respect to a Borel measure (see, for instance, \cite{point}). It is also noteworthy that this definition is very general and will allow a proper regularity study. For instance, \cite{wsimas,simasvalentim2, simasvalentimjde, franco} considered a similar operator when $V(x) = x$, and when the derivative is not a lateral one. The drawback in their approach is that one cannot obtain a pointwise regularity theory. For instance, the eigenvectors of the differential operators considered in \cite{wsimas,simasvalentim2, simasvalentimjde, franco} can only be viewed in the $L^2$-sense, which was the reason the authors in \cite{simasvalentimjde} needed to create an auxiliary space for test functions and also obtain several results exclusively to deal with the lack of regularity. We will discuss this further in Section \ref{sect4}.

A natural question related to Definition \ref{defgendif} is whether there exists a suitable class of functions that are differentiable in this sense. More specifically, we are interested in the functions that are twice differentiable, meaning that they are differentiable with respect to $D_W^-$, and that the resulting function is further differentiable with respect to $D_V^+$. Let us study such a class.
	
	\begin{definition}\label{strongdomain}
		Let $\mathfrak{D}_{W,V}(\mathbb{T})$ be the set of c\`adl\`ag functions $f:\mathbb{T}\to\mathbb{R}$ such that:
		\begin{enumerate}[i)]
			\item $f$ is $W$-left differentiable.
			\item $D^{-}_{W}f$ is a c\`agl\`ad function that is $V$-right differentiable.
			\item $D^{+}_{V}(D^{-}_{W}f)$ is a c\`adl\`ag function.
		\end{enumerate}  
	\end{definition}
	
	Let us prove an auxiliary lemma that will help us in the characterization of the functions in $\mathfrak{D}_{W,V}(\mathbb{T})$. %This lemma basically says that if a function has $V$-generalized
	
	\begin{lemma}\label{lm:1}
		If $f : \mathbb{T}\to\mathbb{R}$ is c\`agl\`ad and $D^{+}_{V}f\equiv 0$, then $f$ is a constant function. Similarly, if $g : \mathbb{T}\to\mathbb{R}$ is c\`adl\`ag and $D^{-}_{W}g\equiv 0$, then $g$ is a constant function.
	\end{lemma}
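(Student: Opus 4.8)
The plan is to adapt the classical argument that a function with vanishing derivative is constant, being careful to match the side on which the lateral derivative is taken with the side on which $f$ (resp.\ $g$) is one-sidedly continuous; it suffices to prove $f(a)=f(b)$ for all reals $a<b$, identifying $f$ with its $1$-periodic extension. So I would fix $a<b$ and $\varepsilon>0$ and introduce the level set
\[
S_\varepsilon = \bigl\{\, t\in[a,b] : |f(t)-f(a)| \le \varepsilon\,(V(t)-V(a)) \,\bigr\},
\]
which contains $a$. Setting $c:=\sup S_\varepsilon$, the first step is to check $c\in S_\varepsilon$: take a sequence in $S_\varepsilon$ increasing to $c$ and pass to the limit, using that both $f$ and $V$ are c\`agl\`ad (left-continuous).

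The key step is then to rule out $c<b$. Assuming $c<b$, the hypothesis $D^{+}_{V}f(c)=0$ furnishes $\delta\in(0,b-c)$ with $|f(c+h)-f(c)|\le\varepsilon\,(V(c+h)-V(c))$ for all $h\in(0,\delta)$ — here I use that $V$ is strictly increasing, so $V(c+h)-V(c)>0$ and there is no division by zero even when $V$ jumps at $c$ — and then the triangle inequality together with the telescoping of the $V$-increments gives
\[
|f(c+h)-f(a)| \le |f(c+h)-f(c)| + |f(c)-f(a)| \le \varepsilon\,(V(c+h)-V(c)) + \varepsilon\,(V(c)-V(a)) = \varepsilon\,(V(c+h)-V(a)),
\]
so $c+h\in S_\varepsilon$, contradicting $c=\sup S_\varepsilon$. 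Hence $c=b\in S_\varepsilon$, i.e.\ $|f(b)-f(a)|\le\varepsilon\,(V(b)-V(a))$ for every $\varepsilon>0$, whence $f(a)=f(b)$. For the statement about $g$ I would run the mirror-image argument: with $a<b$ and $\varepsilon>0$ fixed, set $S_\varepsilon=\{\,t\in[a,b]:|g(b)-g(t)|\le\varepsilon\,(W(b)-W(t))\,\}$, which contains $b$, and let $c:=\inf S_\varepsilon$. Since $g$ and $W$ are c\`adl\`ag (right-continuous), passing to the limit along a sequence in $S_\varepsilon$ decreasing to $c$ shows $c\in S_\varepsilon$; and if $c>a$, then $D^{-}_{W}g(c)=0$ yields, for $h<0$ with $|h|$ small, $|g(c+h)-g(c)|\le\varepsilon\,(W(c)-W(c+h))$, and the same telescoping puts $c+h\in S_\varepsilon$ with $c+h<c$, a contradiction; so $c=a$ and $|g(b)-g(a)|\le\varepsilon\,(W(b)-W(a))$ for all $\varepsilon>0$.

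I expect the only real subtlety to be the bookkeeping around the (possibly dense) discontinuities: one must take a supremum and approximate from the left for $D^{+}_{V}$ paired with c\`agl\`ad $V$, but an infimum and approximate from the right for $D^{-}_{W}$ paired with c\`adl\`ag $W$, so that the extremal point of the level set actually belongs to it; strict monotonicity of $V$ and $W$ is what keeps every difference quotient well-defined. The remaining estimates are routine $\varepsilon$-arguments.
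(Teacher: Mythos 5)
Your proof is correct and uses essentially the same mechanism as the paper's: a connectedness/first-point argument in which one-sided continuity of $f$ and $V$ (resp.\ $g$ and $W$) guarantees the extremal point of the level set belongs to it, and the vanishing lateral derivative propagates the bound past that point. The only cosmetic difference is that you argue directly with the supremum of the set where $|f(t)-f(a)|\le\varepsilon(V(t)-V(a))$ holds for arbitrary $\varepsilon>0$, whereas the paper argues by contradiction with the infimum of the complementary set for one specific $\epsilon$; you also write out the mirror case for $g$ explicitly, which the paper leaves to the reader.
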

	\begin{proof}
		We will only prove the statement for $f$, since the other is entirely analogous.
	   To this end, let us assume, by contradiction, that $f(a)\neq f(b)$, for some $a,b$, with $a<b$. Let $$\epsilon := \frac{|f(b)-f(a)|}{2\left[V(b)-V(a)\right]},$$
		and $\displaystyle c=\inf \left\{x\in (a,b]; |f(x)-f(a)| > \epsilon \left[V(x)-V(a)\right]\right\}$. Note that 
		$$b\in \left\{x\in (a,b]; |f(x)-f(a)|>\epsilon[V(x)-V(a)]\right\},$$ hence, it is non-empty. Since $f$ if left continuous, we actually have that $c<b$ and $|f(c)-f(a)|\le\epsilon(\left[V(c)-V(a)\right].$ $D^{+}_{V}f\equiv 0$ implies that $(D^{+}_{V}f)(c)=0$. Therefore, there exists $\delta>0$ such that  $$|f(c+h)-f(c)|\le\epsilon\left[V(c+h)-V(c)\right]$$
		whenever $h\in(0,\delta].$
		This, in turn, yields that 
		$$|f(c+h)-f(a)|\le |f(c+h)-f(c)| + |f(c)-f(a)| \le \epsilon \left[V(c+h)-V(a)\right]$$
		whenever $h\in(0,\delta]$. This contradicts the choice of $c$. 
	\end{proof}
	
We are now in a position to characterize the functions in the space $\mathfrak{D}_{W,V}(\mathbb{T})$.
	
	\begin{lemma}\label{lm:2}
		A function $f:\mathbb{T}\to \mathbb{R}$ belongs to $\mathfrak{D}_{W,V}(\mathbb{T})$ if, and only if, there exists a c\`adl\`ag function $g : \mathbb{T}\to\mathbb{R}$ such that
		\begin{equation}\label{c1}
			f(x)=f(0)+W(x)D^{-}_{W}f(0)+\int_{(0,x]}\int_{[0,y)} g(s) dV(s)dW(y),
		\end{equation}
		and
		$$W(1)D^{-}_{W}f(0)+\int_{(0,1]}\int_{[0,y)} g(s) dV(s)dW(y)=0,\;\;\; \int_{[0,1)} g(s) dV(s)=0.$$
	\end{lemma}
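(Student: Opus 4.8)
The plan is to prove the characterization in two directions, with the forward direction (membership $\Rightarrow$ integral representation) being the substantive part and the reverse direction essentially a differentiation-under-the-integral computation.

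\medskip

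\noindent\emph{Forward direction.} Suppose $f\in\mathfrak{D}_{W,V}(\mathbb{T})$. Set $g:=D^+_V(D^-_Wf)$, which is c\`adl\`ag by condition iii). The strategy is to reconstruct $D^-_Wf$ from $g$ first, and then $f$ from $D^-_Wf$, using Lemma~\ref{lm:1} as the uniqueness engine at each stage. Concretely, consider the function $x\mapsto \int_{[0,x)} g(s)\,dV(s)$; I would check by a direct estimate (splitting the increment of the integral and using right-continuity of $g$ together with the c\`agl\`ad/c\`adl\`ad structure of $V$) that its $V$-right derivative equals $g$ everywhere, and that it is c\`agl\`ad. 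Hence $D^-_Wf(x) - D^-_Wf(0) - \int_{[0,x)}g(s)\,dV(s)$ is c\`agl\`ad with vanishing $D^+_V$-derivative, so by Lemma~\ref{lm:1} it is constant; evaluating at $x=0$ (where the integral is $0$) shows the constant is $0$, giving
$$D^-_Wf(x) = D^-_Wf(0) + \int_{[0,x)} g(s)\,dV(s).$$
Repeating the same argument one level up: the function $x\mapsto W(x)D^-_Wf(0) + \int_{(0,x]}\int_{[0,y)}g(s)\,dV(s)\,dW(y)$ should be shown to be c\`adl\`ag with $D^-_W$-derivative equal to $D^-_Wf(0) + \int_{[0,x)}g\,dV = D^-_Wf(x)$; then $f$ minus $f(0)$ minus this function is c\`adl\`ag with zero $D^-_W$-derivative, hence constant by Lemma~\ref{lm:1}, and the constant is $0$ by evaluating at $x=0$. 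This yields \eqref{c1}. The two scalar identities at the end are then forced by periodicity: since $f$, $D^-_Wf$, and $g$ all descend to $\mathbb{T}=\mathbb{R}/\mathbb{Z}$, we need $f(1)=f(0)$ and $D^-_Wf(1)=D^-_Wf(0)$ and $\int_{[0,1)}g\,dV = \int_{[1,2)}g\,dV$ to match up; writing out \eqref{c1} at $x=1$ and the derivative formula at $x=1$ produces exactly the two stated conditions (using $W(0)=V(0)=0$ and the periodicity normalization \eqref{periodic}).

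\medskip

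\noindent\emph{Reverse direction.} Conversely, given a c\`adl\`ag $g$ satisfying the two integral constraints, define $f$ by the right-hand side of \eqref{c1}. One must verify the three defining properties of $\mathfrak{D}_{W,V}(\mathbb{T})$: that $f$ is c\`adl\`ag and $W$-left differentiable with $D^-_Wf(x) = D^-_Wf(0) + \int_{[0,x)}g\,dV$ (this is again the ``differentiate the $dW$-integral'' lemma, applied to the integrand $y\mapsto\int_{[0,y)}g\,dV$, which is c\`agl\`ad); that this $D^-_Wf$ is c\`agl\`ad and $V$-right differentiable with derivative $g$ (the other half of the same computation); and that $g$ is c\`adl\`ag, which holds by hypothesis. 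Finally the two integral constraints are precisely what is needed to make $f$ and $D^-_Wf$ well-defined on the torus, i.e. $1$-periodic — $\int_{[0,1)}g\,dV=0$ gives $\int_{[0,x+1)}g\,dV = \int_{[0,x)}g\,dV$ by \eqref{periodic}, hence $D^-_Wf$ is periodic, and then the first constraint gives $f(x+1)=f(x)$.

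\medskip

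\noindent\emph{Main obstacle.} The routine-looking but genuinely delicate point is the elementary lemma that $x\mapsto\int_{[0,x)}\varphi(s)\,dV(s)$ is c\`agl\`ad with $D^+_V$-derivative $\varphi$ when $\varphi$ is c\`adl\`ag, and symmetrically for $dW$-integrals of c\`agl\`ad integrands being c\`adl\`ag with $D^-_W$-derivative equal to the integrand. Because $V$ and $W$ may carry atoms, the difference quotient $\frac{1}{V(x+h)-V(x)}\int_{[x,x+h)}\varphi\,dV$ has to be controlled carefully: when $V$ is continuous at $x$ the standard argument works, but when $x$ is an atom of $V$ one has $V(x+h)-V(x)\to V(\{x\})\cdot(\text{something})$ and one must check the one-sided limit picks out $\varphi(x)$ (for the right derivative of a $[0,x)$-type integral the atom at $x$ is \emph{excluded}, which is exactly why the integration domains in \eqref{c1} are half-open in that particular way, and getting the endpoint conventions consistent is the thing to be careful about). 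Once this atom-aware fundamental-theorem-of-calculus lemma is in hand, everything else is bookkeeping with \eqref{periodic} and Lemma~\ref{lm:1}.
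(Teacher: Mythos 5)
Your proposal is correct and follows exactly the route the paper intends: the paper's own proof consists of the single remark that the lemma ``directly follows from Lemma~\ref{lm:1}, details left to the reader,'' and your argument supplies those details by using Lemma~\ref{lm:1} as the uniqueness engine at each of the two integration stages, together with the atom-aware fundamental-theorem-of-calculus computation for the half-open Stieltjes integrals. The one point you rightly flag as delicate --- that the increment $\int_{[x,x+h)}\varphi\,dV$ divided by $V(x+h)-V(x)$ tends to $\varphi(x)$ even at atoms of $V$, because the half-open conventions place the atom inside the increment and $\varphi$ is right-continuous there --- is handled correctly, so nothing is missing.
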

	\begin{proof}
		It directly follows from Lemma \ref{lm:1}. The details are left to the reader.
	\end{proof}
	
	\begin{remark}\label{difop}
		Let us consider the special case in which we have $V(x)=x$, and $\mbox{ran}(D^{+}_{V}D^{-}_{W})\subseteq \{\mbox{continuous functions}\}$. In such a case, we have, by \eqref{c1}, that 
	$$D_{W}f(x):=\lim_{h\to 0} \frac{f(x+h)-f(x)}{W(x+h)-W(x)}$$
	is well-defined for all $x$ in $\mathbb{T}$. Further, $D_{W}f$ is continuous and differentiable, with $D_{x}D_{W}f$ also being continuous, i.e., the domain $\mathfrak{D}_{W,V}(\mathbb{T})$ coincides with the domain $\mathfrak{D}_{W}$ of $D_{x}D_{W}$ introduced in \cite{franco}. 
	In the same way, we can work with the operator $D^{-}_WD^{+}_V$ in the domain of c\`agl\`ad functions $f : \mathbb{T}\to\mathbb{R}$ that are: (i) $V$-right differentiable;  (ii) $D^{+}_Vf$ is c\`adl\`ag and $W$-left differentiable; and (iii) $D^{-}_W(D^{+}_Vf)$ is c\`agl\`ad. Therefore if $V(x)$ is continuous and $\mbox{ran}(D^{-}_WD^{+}_V) \subseteq \{\mbox{continuous functions}\}$. Then, then the domain of $D^{-}_WD^{+}_V$ is exactly the domain $\mathfrak{D}(\mathbb{T})$ of the Feller operator $D_{W}D^{-}_{V}$. More precisely, if we define
    $$\overline{W}(x)=W(x-):=\lim_{\delta\to0^+}W(x-\delta),$$
    we have $$D_{\overline{W}}D^{-}_{V}f=D_{W}D^{+}_{V}f.$$
	For more details on the operator $D_{W}D^{-}_{V}$, we refer the reader to \cite{feller, mandl}.
	\end{remark}
	
\begin{remark}\label{orderVW}
		In the remaining of the paper it is important to pay attention at the positions of $W$ and $V$ on the subindex. We will use the subscripts $W,V$ for every structure stritly related to the operator $D^{+}_{V}D^{-}_{W}$. Similarly, whenever we use the subscripts in the order $V,W$, we will be referring to the structure related to the corresponding operator $D^{-}_{W}D^{+}_{V}$. Note that by doing this we will keep changing between c\`adl\`ag and c\`agl\`ad functions. We ask the reader to pay attention to these details as they are subtle.
\end{remark}
	
	\begin{theorem}\label{thm:1}
		The following statements are true
		\begin{enumerate}[a)]
			\item The set $\mathfrak{D}_{W,V}(\mathbb{T})$ is dense in $L^2_{V}(\mathbb{T}).$
            \item (Integration by parts) For $f,g\in\mathfrak{D}_{W,V}$, we have
            		$$\langle D^{+}_{V}D^{-}_{W}f,g\rangle_{V}=-\int_{\mathbb{T}}D^{-}_{W}f(s)D^{-}_{W}g(s) dW(s).$$
			\item The operator $D^{+}_{V}D^{-}_{W}: \mathfrak{D}_{W,V}(\mathbb{T})\subset L^2_{V}(\mathbb{T})\to L^2_{V}(\mathbb{T})$ is symmetric and non-positive.

			\item (Poincaré-Friedrichs Inequality) Let $f$ be a c\`adl\`ag function such that $D^{-}_{W}f$  exists and is a càglàg function. Then,
			\begin{equation}\label{i1}
				\|f\|^{2}_{V}\le W(1)V(1)\|D^{-}_{W}f\|^{2}_{W}+V(1)f_{\mathbb{T}}^{2},
			\end{equation}
			where $f_{\mathbb{T}}=\frac{\int_{\mathbb{T}}f(s)dV(s)}{\int_{\mathbb{T}}1dV(s)}.$
		\end{enumerate}
	\end{theorem}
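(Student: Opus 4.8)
The plan is to establish (b) first, since (c) is then immediate and the same combination of the fundamental theorem of calculus for $D^{-}_{W}$ and $D^{+}_{V}$ (through Lemma \ref{lm:2}) together with Fubini's theorem drives all four items. Fix $f,g\in\mathfrak{D}_{W,V}(\mathbb{T})$ and set $\Phi:=D^{+}_{V}D^{-}_{W}f$. By Lemma \ref{lm:2}, $\int_{[0,1)}\Phi\,dV=0$, and \eqref{c1} is equivalent to the two identities $D^{-}_{W}f(x)=D^{-}_{W}f(0)+\int_{[0,x)}\Phi\,dV$ and $f(x)=f(0)+\int_{(0,x]}D^{-}_{W}f(y)\,dW(y)$ (and likewise for $g$). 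Substituting the representation of $g$ into $\langle\Phi,g\rangle_{V}=\int_{\mathbb{T}}\Phi g\,dV$, the constant term contributes $g(0)\int_{[0,1)}\Phi\,dV=0$, and Fubini over $\{0<y\le x\le 1\}$ turns the remainder into $\int_{(0,1]}D^{-}_{W}g(y)\big(\int_{[y,1)}\Phi\,dV\big)\,dW(y)$. Using $\int_{[0,1)}\Phi\,dV=0$ we get $\int_{[y,1)}\Phi\,dV=D^{-}_{W}f(0)-D^{-}_{W}f(y)$, while $\int_{(0,1]}D^{-}_{W}g\,dW=g(1)-g(0)=0$ by periodicity; hence the expression collapses to $-\int_{\mathbb{T}}D^{-}_{W}f\,D^{-}_{W}g\,dW$, which is (b). Item (c) follows at once: the right-hand side of (b) is symmetric in $f,g$, so $D^{+}_{V}D^{-}_{W}$ is symmetric (its domain being dense by (a)), and $g=f$ gives $\langle D^{+}_{V}D^{-}_{W}f,f\rangle_{V}=-\|D^{-}_{W}f\|^{2}_{W}\le 0$.

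For (d), the crucial point is the representation $f(x)=f(0)+\int_{(0,x]}D^{-}_{W}f\,dW$, valid for every c\`adl\`ag $f$ with c\`agl\`ad $D^{-}_{W}f$: indeed $x\mapsto\int_{(0,x]}D^{-}_{W}f\,dW$ is c\`adl\`ag and has $D^{-}_{W}$-derivative equal to $D^{-}_{W}f$, so the difference has vanishing $D^{-}_{W}$-derivative and is constant by Lemma \ref{lm:1}. Consequently $|f(x)-f(z)|=\big|\int D^{-}_{W}f\,dW\big|\le W(1)^{1/2}\|D^{-}_{W}f\|_{W}$ by Cauchy--Schwarz, for all $x,z\in\mathbb{T}$. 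Averaging $f(x)-f_{\mathbb{T}}=\frac{1}{V(1)}\int_{\mathbb{T}}\big(f(x)-f(z)\big)\,dV(z)$ gives $|f(x)-f_{\mathbb{T}}|\le W(1)^{1/2}\|D^{-}_{W}f\|_{W}$, and since $f-f_{\mathbb{T}}$ is $L^{2}_{V}$-orthogonal to the constants, $\|f\|^{2}_{V}=\|f-f_{\mathbb{T}}\|^{2}_{V}+V(1)f^{2}_{\mathbb{T}}\le W(1)V(1)\|D^{-}_{W}f\|^{2}_{W}+V(1)f^{2}_{\mathbb{T}}$.

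It remains to prove (a). Let $h\in L^{2}_{V}(\mathbb{T})$ be orthogonal to $\mathfrak{D}_{W,V}(\mathbb{T})$; testing against $1\in\mathfrak{D}_{W,V}(\mathbb{T})$ yields $\int_{\mathbb{T}}h\,dV=0$. By the ``if'' direction of Lemma \ref{lm:2}, every c\`adl\`ag $g$ with $\int_{[0,1)}g\,dV=0$ equals $D^{+}_{V}D^{-}_{W}f$ for some $f\in\mathfrak{D}_{W,V}(\mathbb{T})$, which by \eqref{c1} can be written $f(x)=f(0)+\int_{(0,x]}G\,dW-\frac{W(x)}{W(1)}\int_{(0,1]}G\,dW$, where $G(y):=\int_{[0,y)}g\,dV$ is c\`agl\`ad with $G(0)=G(1)=0$. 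Expanding $0=\langle h,f\rangle_{V}$, the $f(0)$ term vanishes because $\int h\,dV=0$, and Fubini reduces the rest to $\int_{(0,1]}G(y)\big(H(y)-c_{h}\big)\,dW(y)=0$, where $H(y):=\int_{[y,1)}h\,dV$ and $c_{h}:=\frac{1}{W(1)}\int_{\mathbb{T}}h\,W\,dV$. Since $V$ is strictly increasing, one can choose such $g$'s so that the associated $G$'s approximate $\mathbf{1}_{(a,b)}$ in $L^{2}_{W}(\mathbb{T})$ for $a,b$ ranging over continuity points of $W$; hence this family of $G$'s is dense in $L^{2}_{W}(\mathbb{T})$, which forces $H=c_{h}$ $W$-a.e. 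As $W$ is strictly increasing and $H$ is c\`agl\`ad, $H$ must be constant, so $H\equiv H(0)=\int_{\mathbb{T}}h\,dV=0$; then $\int_{[y_{1},y_{2})}h\,dV=H(y_{1})-H(y_{2})=0$ for all $y_{1}<y_{2}$, so $h=0$ in $L^{2}_{V}(\mathbb{T})$.

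I expect (a) to be the only genuinely delicate step — in particular the verification that the constructed family of functions $G$ is dense in $L^{2}_{W}(\mathbb{T})$, and, more broadly, keeping careful track of the torus endpoint contributions and of the possible atoms of $W$ and $V$ throughout the Fubini and fundamental-theorem arguments. Items (b)--(d) should amount to bookkeeping on top of Lemma \ref{lm:2}.
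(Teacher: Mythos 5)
Your proof is correct, but it travels a noticeably different road from the paper's, chiefly in items (a) and (b). For the density statement the paper argues constructively: it approximates an arbitrary continuous function uniformly by explicitly built elements of $\mathfrak{D}_{W,V}(\mathbb{T})$ (a step function for the $W$-derivative, then a step function for its $V$-derivative, then two integrations and a correction term $bW(x)$ to restore periodicity). You instead take the annihilator route: you use the ``if'' direction of Lemma \ref{lm:2} to realize every mean-zero c\`adl\`ag function as $D^{+}_{V}D^{-}_{W}f$, reduce orthogonality of $h$ to the identity $\int G\,(H-c_h)\,dW=0$ over the family of primitives $G=\int_{[0,\cdot)}g\,dV$, and then show this family is $L^2_W$-dense. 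This is sound -- the ramp approximations of $\mathbf{1}_{(a,b]}$ work even in the presence of atoms of $W$ (the convergence is pointwise and dominated), and the passage from ``$H=c_h$ $W$-a.e.'' to ``$H\equiv c_h$'' correctly exploits that $W$ is strictly increasing together with the left-continuity of $H$ -- but it leans entirely on Lemma \ref{lm:2}, whose proof the paper leaves to the reader, and it yields only density, whereas the paper's construction gives uniform approximation of continuous functions, a stronger fact the paper reuses (e.g.\ in Corollary \ref{cr:1}). For (b) the paper proves a more general integration-by-parts formula \eqref{e3} on arbitrary subintervals $[a,b)$ via Riemann--Stieltjes sums over partitions, valid for any c\`adl\`ag $g$ with c\`agl\`ad $D^-_Wg$ paired against any c\`agl\`ad $f$ with c\`adl\`ag $D^+_Vf$; your Fubini argument is cleaner but is specialized to the periodic case with both functions in $\mathfrak{D}_{W,V}(\mathbb{T})$, which suffices for the theorem but does not recover \eqref{e3}, which the paper uses again later. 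Items (c) and (d) essentially coincide with the paper's treatment (your (d) replaces the paper's variance identity plus H\"older by a pointwise bound plus the Pythagorean decomposition, which is equivalent). One small point worth writing out if you keep your version of (d): the representation $f(x)=f(0)+\int_{(0,x]}D^-_Wf\,dW$ for a general c\`adl\`ag $f$ with c\`agl\`ad $D^-_Wf$ does need the verification, which you correctly sketch via Lemma \ref{lm:1}, that the $W$-primitive of a c\`agl\`ad function has that function as its $W$-left derivative at every point.
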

	\begin{proof}
		Let us prove the first item. Begin by noting that the space of continuous functions is dense in $L^2_{V}(\mathbb{T})$. Therefore, since $\mathbb{T}$ has finite measure, it is enough to show that every continuous function $f : \mathbb{T}\to\mathbb{R}$ can be aproximated, in the uniform norm, by functions in $\mathfrak{D}_{W,V}(\mathbb{T})$. Since $\mathbb{T}$ is compact, given any $\epsilon>0$, there exists $n\in\mathbb{N}$ such that $|x-y|\le 1/n$ implies $|f(x)-f(y)|\le\epsilon$. Now, let us consider the function $g: \mathbb{T}\to \mathbb{R}$
		defined by
		$$g(x)=\sum_{j=0}^{n-1}\dfrac{f\left(\frac{j+1}{n}\right)-f\left(\frac{j}{n}\right)}{W\left(\frac{j+1}{n}\right)-W\left(\frac{j}{n}\right)}\textbf{1}_{\{(j/n,(j+1)/n]\}}(x),$$
		where $\textbf{1}_{A}$ stands for the indicator of the set $A$. Let $G : \mathbb{T}\to\mathbb{R}$ be given by
			$$G(x) = f(0)+\int_{(0,x]}g(y)dW(y).$$
			Therefore, for $\frac{j}{n}<x\le \frac{j+1}{n}$, $0\le j \le n-1$, we have that
		$$G(x)=f\left(\frac{j}{n}\right)+\dfrac{f\left(\frac{j+1}{n}\right)-f\left(\frac{j}{n}\right)}{W\left(\frac{j+1}{n}\right)-W\left(\frac{j}{n}\right)}\left(W(x)-W\left(\frac{j}{n}\right)\right).$$	
		Hence, $G(j/n)=f(j/n)$ and, for $\frac{j}{n}<x< \frac{j+1}{n}$, we have
		$$|G(x)-f(x)|\le|G(x)-f(j/n)|+|f(x)-f(j/n)|.$$
		Now, observe that our choice of $n$ implies $\| G-f \|_{\infty}\le 2 \epsilon$. Note, also, that $g$ is càglàg, and
	    $$
		   \int_{\mathbb{T}}g(s)dW(s)=0.
		$$
		Since $g$ is a c\`agl\`ad function, given $\epsilon>0$ there exists a partition of $\{0=z_{0}<z_{1}<\cdots<z_{k}=1\}$ of $\mathbb{T}$ such that $|g(b)-g(a)|\le \epsilon$, whenever $a,b \in [z_{k-1},z_k)$. Now, let us define a c\`adl\`ag function $p:\mathbb{T}\to\mathbb{R}$ given by 
		$$p(x)=\sum_{i=1}^{k}\dfrac{g(z_{i})-g(z_{i-1})}{V(z_{i})-V(z_{i-1})}\textbf{1}_{\{[z_{i-1},z_{i})\}}(x).$$
		Note that $p$ is c\`adl\`ad and $\int_{\mathbb{T}}p(s)dV(s)=0$. Let us now use this $p$ to define $P:\mathbb{T}\to\mathbb{R}$ by 
		$$P(x)=g(0)+\int_{[0,x)}p(s)dV(s).$$
		By the choice of the partition above, we have that $\|P-g\|_{\infty}\le 2\epsilon.$ Finally, note that for $b=-g(0)-(W(1))^{-1}\int_{(0,1]}dW(\xi)\int_{[0,\xi)}p(\eta)dV(\eta)$, the function
		$$h(x):=f(0)+\int_{(0,x]}dW(\xi)\left(b+g(0)+\int_{[0,\xi)}p(\eta)dV(\eta)\right),$$
		belongs to $\mathfrak{D}_{W,V}$. Indeed, we have that
		\begin{align*}\label{contaconta}
		h(x)&=f(0)+\int_{(0,x]}dW(\xi)\left(g(0)+\int_{[0,\xi)}p(\eta)dV(\eta)\right)+bW(x)\\
		&= f(0)+\int_{(0,x]}P(\xi)dW(\xi) + bW(x).
		\end{align*}
		Now, observe that since $\|P-g\|_\infty\leq 2\epsilon$ and $\|G-f\|_\infty\leq 2\epsilon$, we have
		\begin{align*}
			\left\|h - bW -f\right\|_\infty &\leq \left\|h - bW - G\right\|_\infty+ \|G - f\|_\infty\leq 2\epsilon(1 + W(1)-W(0)).
		\end{align*}
		Now, let us handle the term $bW(x)$. Note that 
		\begin{align*}
		    \nonumber|b W(x)| &= |W(x)|\left| g(0)+(W(1))^{-1}\int_{(0,1]}dW(\xi)\int_{[0,\xi)}p(\eta)dV(\eta) \right|\\ 
		    &\le \left|\int_{(0,1]}\left(g(0)+\int_{[0,\xi)}p(\eta)dV(\eta)\right)dW(\xi) \right| = \left| \int_{(0,1]} P(\xi) dW(\xi)\right|\\
			&\leq \left| \int_{(0,1]} g(\xi) dW(\xi)\right| + 2\epsilon = 2\epsilon.
		\end{align*}
		Since $\int_{\mathbb{T}} g \, dW = 0$, there exists a constant $C > 0$, independent of $n$ and $k$, such that $\|h - f\|_{\infty} \leq C\epsilon$, which proves item 1. in the statement.

		To prove 2., suppose $g$ is a c\`adl\`ag $W$-left differentiable function with $D^{-}_{W}g$ c\`agl\`ad, and that $f$ is a c\`agl\`ad $V$-right differentiable function with $D^{+}_{V}f$ c\`adl\`ag. For any $\epsilon>0$ choose a partition $\{a=z_{1}<z_{2}<\cdots<z_{n+1}=b\}$ of $[a,b]\subset\mathbb{T}$ such that
		\[\left\{ \begin{array}{ll}
			|g(s)-g(t)|\le\epsilon, \mbox{if}\; z_{k}\le s,t<z_{k+1};\\\\
			|f(s)-f(t)|\le\epsilon, \mbox{if} \;z_{k}< s,t\le z_{k+1}.\end{array} \right. \] 
		We, then, have
		\begin{align}\label{eq1}
			\nonumber A_{1}  &= \left|\int_{[a,b)}g(s)D^{+}_{V}f(s) dV - \sum_{k=1}^{n}g(z_{k})[f(z_{k+1})-f(z_{k})]\right| \\
			& = \left|\sum_{k=1}^{n}\int_{[z_{k},z_{k+1})}D^{+}_{V}f(s)[g(s)-g(z_{k})]dV(s)\right|\le \epsilon\|D^{+}_{V}f\|_{\infty}[V(1)].
		\end{align}
		On the other hand
		$$\sum_{k=1}^{n}g(z_{k})[f(z_{k+1})-f(z_{k})]=[f(b)g(b)-f(a)g(a)]-\sum_{k=2}^{n+1}f(z_{k})[g(z_{k})-g(z_{k-1})]$$
		and 
		\begin{align}\label{eq2}
			\nonumber A_{2}  &= \left|\int_{(a,b]}f(s)D^{-}_{W}g(s) dW - \sum_{k=2}^{n+1}f(z_{k})[g(z_{k})-g(z_{k-1})]\right| \\
			& = \left|\sum_{k=1}^{n}\int_{(z_{k-1},z_{k}]}D^{-}_{W}g(s)[f(s)-f(z_{k})]dW(s)\right|\le \epsilon\|D^{-}_{W}g\|_{\infty}[W(1)].
		\end{align}
		Finally, (\ref{eq1}) and (\ref{eq2}) imply
		\begin{equation}
			\nonumber \left| \int_{[a,b)}\!\!\!\!\! g(s)D^{+}_{V}f(s) dV(s)+\int_{(a,b]}\!\!\!\!\!f(s)D^{-}_{W}g(s) dW(s) -[f(b)g(b)-f(a)g(a)]\right|\le \epsilon C,
		\end{equation}
		where $C=\{\|D^{+}_{V}f\|_{\infty}[V(1)]+\|D^{-}_{W}g\|_{\infty}[W(1)]\}$. Since $\epsilon$ is arbitrary, we obtain
		\begin{equation}\label{e3}
			\int_{[a,b)}g(s)D^{+}_{V}f(s) dV(s)=[f(b)g(b)-f(a)g(a)]-\int_{(a,b]}f(s)D^{-}_{W}g(s) dW(s).
		\end{equation}
        Further, using (\ref{e3}), we can easily see that, for $f,g \in \mathfrak{D}_{W,V}(\mathbb{T})$, we have
		\begin{equation}\label{e4}
			\langle D^{+}_{V}D^{-}_{W}f,g\rangle_{V}=\int_{\mathbb{T}}g(s)D^{+}_{V}D^{-}_{W}f(s) dV(s)=-\int_{\mathbb{T}}D^{-}_{W}f(s)D^{-}_{W}g(s) dW(s).
		\end{equation}
        This proves both 2. and 3. in the statement. 
		
		Finally, to prove 4., note that  $$\int_{\mathbb{T}}f^{2}(\xi)dV(\xi)-V(1)f_{\mathbb{T}}^{2}=\dfrac{1}{V(1)^2}\int_{\mathbb{T}}\left(\int_{\mathbb{T}}[f(\xi)-f(\eta)]dV(\eta)\right)^2dV(\xi).$$
		On the right hand side we use that $f(\xi)-f(\eta)=\int_{(\eta,\xi]}D^{-}_{W}f(s)dW(s)$, and then apply H\"older's inequality to obtain (\ref{i1}).
	\end{proof} 

The following consequence of Theorem \ref{thm:1} will be very useful through this work. However, first, we need a definition.
	
	\begin{definition}
		Let $L^2_{W,0}(\mathbb{T})$ be the subspace of $L^2_W(\mathbb{T})$ consisting of functions with mean zero, i.e. 
		$$L^{2}_{W,0}(\mathbb{T}):=\left\{H\in L^{2}_{W}(\mathbb{T}); \int_{\mathbb{T}}HdW=0\right\}.$$ 
	\end{definition}
	\begin{corollary}\label{cr:1}
		The set of $W$-left-derivatives of functions in $\mathfrak{D}_{W,V}$ is dense in $L^{2}_{W,0}(\mathbb{T})$ with respect to the uniform topology. In particular,
		$$\overline{\{D^{-}_{W}f; f\in \mathfrak{D}_{W,V}\}}^{\|\cdot\|_{L^2_{W}(\mathbb{T})}}=L^2_{W,0}(\mathbb{T}).$$
	\end{corollary}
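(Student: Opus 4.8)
The plan is to read the statement off the explicit construction carried out in the proof of Theorem \ref{thm:1}(a). The point is that that construction, when read with an \emph{arbitrary} c\`agl\`ad function $g:\mathbb{T}\to\mathbb{R}$ with $\int_{\mathbb{T}}g\,dW=0$ in place of the particular step function attached to a continuous $f$, already produces a function in $\mathfrak{D}_{W,V}(\mathbb{T})$ whose $W$-left-derivative is uniformly close to $g$.

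Concretely, I would fix such a $g$ and an $\epsilon>0$ and run the proof of Theorem \ref{thm:1}(a) from the choice of the partition $\{0=z_0<\cdots<z_k=1\}$ with $|g(b)-g(a)|\le\epsilon$ for $a,b\in[z_{i-1},z_i)$: define the c\`adl\`ag step function $p$, set $P(x)=g(0)+\int_{[0,x)}p\,dV$ (so that $\|P-g\|_\infty\le2\epsilon$ and $\int_{\mathbb{T}}p\,dV=0$), choose the constant $b$ as there, and put $h(x)=\int_{(0,x]}\big(b+P(\xi)\big)\,dW(\xi)$. Exactly as in that proof, Lemma \ref{lm:2} gives $h\in\mathfrak{D}_{W,V}(\mathbb{T})$, one has $D^-_W h=b+P$, and the estimate $|b\,W(x)|\le2\epsilon$ established there (which uses $\int_{\mathbb{T}}g\,dW=0$) forces $|b|\le C'\epsilon$ with $C'$ depending only on $W$; hence $\|D^-_W h-g\|_\infty\le C\epsilon$ with $C=C(W)$. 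Since every continuous function is c\`agl\`ad, this already shows that every continuous mean-zero function lies in the uniform closure of $\{D^-_W f:f\in\mathfrak{D}_{W,V}(\mathbb{T})\}$, which is the uniform-density assertion.

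For the $L^2_W$-identity, note first that $\{D^-_W f:f\in\mathfrak{D}_{W,V}(\mathbb{T})\}\subseteq L^2_{W,0}(\mathbb{T})$ is immediate from Lemma \ref{lm:2}. Conversely, given $H\in L^2_{W,0}(\mathbb{T})$ and $\eta>0$, I would pick a continuous $\varphi$ with $\|H-\varphi\|_W<\eta$; since $\int_{\mathbb{T}}H\,dW=0$, Cauchy--Schwarz gives $\big|\int_{\mathbb{T}}\varphi\,dW\big|=\big|\int_{\mathbb{T}}(\varphi-H)\,dW\big|\le\eta\sqrt{W(1)}$, so the continuous mean-zero function $\widetilde\varphi:=\varphi-W(1)^{-1}\int_{\mathbb{T}}\varphi\,dW$ still satisfies $\|H-\widetilde\varphi\|_W\le2\eta$. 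By the previous step there is $h\in\mathfrak{D}_{W,V}(\mathbb{T})$ with $\|D^-_W h-\widetilde\varphi\|_\infty$ arbitrarily small, hence with $\|D^-_W h-\widetilde\varphi\|_W\le\sqrt{W(1)}\,\|D^-_W h-\widetilde\varphi\|_\infty$ arbitrarily small; combining the three estimates places $H$ in the $L^2_W$-closure of $\{D^-_W f:f\in\mathfrak{D}_{W,V}(\mathbb{T})\}$, and with the reverse inclusion this gives the claimed equality.

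The main obstacle will be the verification, in the second paragraph, that the explicit $h$ from the proof of Theorem \ref{thm:1}(a) really belongs to $\mathfrak{D}_{W,V}(\mathbb{T})$ and really satisfies $D^-_W h=b+P$: one must check that the c\`adl\`ag/c\`agl\`ad bookkeeping (the half-open intervals $[z_{i-1},z_i)$ versus $(z_{i-1},z_i]$, and the Stieltjes integrals $\int_{[0,\cdot)}\cdot\,dV$ versus $\int_{(0,\cdot]}\cdot\,dW$) is arranged so that $D^-_W h$ comes out c\`agl\`ad and $D^+_V D^-_W h=p$ c\`adl\`ag, and that the two periodicity conditions in Lemma \ref{lm:2} are met by the choice of $b$. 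Since all of this is already contained in the proof of Theorem \ref{thm:1}, the remaining work is routine.
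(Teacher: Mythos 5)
Your argument reaches the same conclusion and is built on the same core mechanism as the paper's proof (produce a regular approximant of the target, integrate it against $dW$, and add a multiple of $W(\cdot)$ to restore periodicity, with $\int_{\mathbb{T}}g\,dW=0$ forcing the correction constant to be $O(\epsilon)$), but it is packaged differently. The paper does not re-run the interpolation: it invokes the mirrored version of Theorem~\ref{thm:1}(a) to get $p_n\in\mathfrak{D}_{V,W}$ with $p_n\to g$, sets $f_n(x)=\int_{(0,x]}p_n\,dW-\tfrac{W(x)}{W(1)}\int_{\mathbb{T}}p_n\,dW$, and reads off $D_W^-f_n=p_n-\tfrac{1}{W(1)}\int_{\mathbb{T}}p_n\,dW$. (The paper even remarks that the point of its proof is to exploit the $W,V\leftrightarrow V,W$ symmetry.) Your version inlines the construction instead, and your reduction of the $L^2_{W,0}$ statement to continuous mean-zero targets (approximate, recentre, pass from $\|\cdot\|_\infty$ to $\|\cdot\|_W$ via the finite measure) is actually spelled out more carefully than in the paper, which asserts uniform convergence $p_n\to g$ for an arbitrary $g\in L^2_{W,0}(\mathbb{T})$ without this intermediate step.

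One claim in your first paragraph is genuinely too strong and you should drop it: the construction does \emph{not} work for an arbitrary c\`agl\`ad mean-zero $g$. The candidate derivative it produces has the form $b+g(0)+\int_{[0,\cdot)}p\,dV$, so its jumps occur only at atoms of $V$; consequently it cannot be uniformly close to a c\`agl\`ad $g$ that jumps at a point where $V$ is continuous (take $g=\mathbf{1}_{[0,1/2]}$ minus a constant, with $V$ continuous at $1/2$: any $V$-interpolant $P$ passes continuously through the unit drop, so $\|P-g\|_\infty\geq 1/2$ no matter how the partition is refined). Relatedly, for c\`agl\`ad $g$ the finite partition with small oscillation exists on intervals of the form $(z_{i-1},z_i]$, not $[z_{i-1},z_i)$. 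Neither issue damages your proof of the displayed $L^2_W$-identity, because there you only apply the construction to \emph{continuous} mean-zero functions, for which uniform continuity supplies the partition and the interpolation estimate $\|P-g\|_\infty\leq 2\epsilon$ is valid; restricted to that case, the argument is complete and correct.
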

	\begin{proof}
		Fix a function $g\in L^{2}_{W,0}(\mathbb{T})$. By the version of Theorem 1 for $\mathfrak{D}_{V,W}$ (that is, related to the $V,W$-structure, see Remark \ref{orderVW}) there is a sequence $p_{n}\in \mathfrak{D}_{V,W}$ such that $p_{n}\to g$ uniformly. Now, let $$f_{n}(x):=\int_{(0,x]}p_{n}dW-\dfrac{W(x)}{W(1)}\int_{\mathbb{T}}p_{n}dW.$$
		We have that $f_{n}\in \mathfrak{D}_{W,V}$ and $D^{-}_{W}f_n = p_{n}-\frac{1}{W(1)}\int_{\mathbb{T}}p_{n}dW.$  The triangular inequality yields
		\begin{equation}\label{ineqabov}
		    \|D^{-}_{W}f_{n}-g\|_{\infty}\le \left|\frac{1}{W(1)}\int_{\mathbb{T}}p_{n}dW\right|+\|p_{n}-g\|_{\infty}.
		\end{equation}
		Using the uniform convergence of $p_{n}$ and the inequality (\ref{ineqabov}), we obtain that $D^{-}_{W}f_{n}\to g$ uniformly in $\mathbb{T}$. This yields the desired convergence in $L^{2}_{W}(\mathbb{T}).$
	\end{proof}

\begin{remark}
	Notice that Corollary \ref{cr:1} generalizes a claim made in \cite[p.423]{franco}, whose proof was not given. However, here we provide an explicit proof. Our idea for this proof was to exploit the symmetric properties between the theory in $W,V$ and the theory in $V,W$.
\end{remark}

\section{$W$-$V$-Sobolev Spaces}\label{sec:wvsob}
	In this section we will introduce the Sobolev-type spaces associated to the one-sided derivatives introduced in the previous section. Moreover, we will obtain several characterizations connecting these Sobolev-type spaces to the well-known Sobolev spaces. Let us start with a technical definition.

	\begin{definition}\label{sobolev1}
		Let $V,W:\mathbb{R}\to\mathbb{R}$ be increasing functions satisfying the periodic condition \eqref{periodic}. We define the first order $W$-$V$-Sobolev space, denoted by $H_{W,V}(\mathbb{T})$, as the energetic space (in the sense of Zeidler, \cite[Section 5.3]{zeidler}) associated to the operator $(I-D^{+}_{V}D^{-}_{W}): \mathfrak{D}_{W,V}(\mathbb{T})\subset L^2_{V}(\mathbb{T})\to L^2_{V}(\mathbb{T})$. That is, we define the norm $$\|f\|_{1,2}^2 = \langle (I - D^{+}_{V}D^{-}_{W})f,f\rangle_{V}$$
		in $\mathfrak{D}_{W,V}(\mathbb{T})$ and say that $f\in L^2_{V}(\mathbb{T})$ belongs to $H_{W,V}(\mathbb{T})$ if, and only if, the following conditions hold:
		\begin{enumerate}
			\item There exists a sequence $f_n\in \mathfrak{D}_{W,V}(\mathbb{T})$ such that $f_n\to f$ in $L^2_V(\mathbb{T})$;
			\item The sequence $f_n$ is Cauchy with respect to the energetic norm $\|\cdot\|_{1,2}$.
		\end{enumerate}
		A sequence $(f_n)_{n\in\mathbb{N}}$ in $\mathfrak{D}_{W,V}(\mathbb{T})$ satisfying 1 and 2 is called an \emph{admissible sequence}. 
	\end{definition}
	
	\begin{remark}
		Notice that, by condition 2 of Definition \ref{sobolev1}, the norm $\|\cdot\|_{1,2}$ can be uniquely extended to $H_{W,V}(\mathbb{T})$. Therefore, we endow $H_{W,V}(\mathbb{T})$ with this extended norm, which we will also denote by $\|\cdot\|_{1,2}$.
	\end{remark}

	The following theorem characterizes $H_{W,V}(\mathbb{T})$ and is a generalization of the well-known characterization of the first order Sobolev spaces in dimension 1 as the space of absolutely continuous functions with square integrable derivatives.
	
	\begin{theorem}\label{sobchar}
		A function $f\in L^{2}_{V}(\mathbb{T})$ belongs to $H_{W,V}(\mathbb{T})$ if, and only if, there is a function $F\in L^{2}_{W,0}(\mathbb{T})$ and a finite constant c such that
		\begin{equation}\label{c3}
			f(x)=c+\int_{(0,x]}F(s)dW(s),    
		\end{equation}
		$V$-a.e. Furthermore, in such a case the function $F$ is unique, so we denote $D_W^- f := F$.
	\end{theorem}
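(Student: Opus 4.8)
The plan is to first reduce the energetic norm to a concrete expression and then establish the two implications, handling uniqueness at the end. By the integration-by-parts identity of Theorem~\ref{thm:1}(b), every $f\in\mathfrak{D}_{W,V}(\mathbb{T})$ satisfies $\|f\|_{1,2}^2=\langle(I-D^+_VD^-_W)f,f\rangle_V=\|f\|_V^2+\|D^-_Wf\|_W^2$; since $\mathfrak{D}_{W,V}(\mathbb{T})$ is a vector space and $D^-_W$ is linear on it, a sequence $(f_n)\subset\mathfrak{D}_{W,V}(\mathbb{T})$ is admissible precisely when $f_n\to f$ in $L^2_V(\mathbb{T})$ and $(D^-_Wf_n)$ is Cauchy in $L^2_W(\mathbb{T})$. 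I will also use, from Lemma~\ref{lm:2}, that each $f\in\mathfrak{D}_{W,V}(\mathbb{T})$ can be written as $f(x)=f(0)+\int_{(0,x]}D^-_Wf(y)\,dW(y)$ with $D^-_Wf\in L^2_{W,0}(\mathbb{T})$.

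For the forward implication, given $f\in H_{W,V}(\mathbb{T})$ with an admissible sequence $(f_n)$, the reformulation above shows $(D^-_Wf_n)$ is Cauchy in the closed subspace $L^2_{W,0}(\mathbb{T})$, hence converges to some $F\in L^2_{W,0}(\mathbb{T})$. Cauchy--Schwarz gives $\sup_x\bigl|\int_{(0,x]}(D^-_Wf_n-D^-_Wf_m)\,dW\bigr|\le\sqrt{W(1)}\,\|D^-_Wf_n-D^-_Wf_m\|_W$, so from $f_n(0)=f_n(x)-\int_{(0,x]}D^-_Wf_n\,dW$ and the $L^2_V$-Cauchyness of $(f_n)$ one deduces that the constants $f_n(0)$ form a Cauchy sequence of reals; write $c:=\lim_n f_n(0)$. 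Passing to the limit in $f_n(x)=f_n(0)+\int_{(0,x]}D^-_Wf_n\,dW$ (the left-hand side in $L^2_V(\mathbb{T})$, the integral uniformly in $x$) then yields the representation~\eqref{c3} $V$-a.e.

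For the converse, if $f(x)=c+\int_{(0,x]}F\,dW$ $V$-a.e.\ with $F\in L^2_{W,0}(\mathbb{T})$, I would invoke Corollary~\ref{cr:1} to choose $g_n\in\mathfrak{D}_{W,V}(\mathbb{T})$ with $D^-_Wg_n\to F$ in $L^2_W(\mathbb{T})$ and set $f_n:=g_n+(c-g_n(0))$, which again lies in $\mathfrak{D}_{W,V}(\mathbb{T})$ since constants do; then $D^-_Wf_n=D^-_Wg_n$, and $f_n(x)=c+\int_{(0,x]}D^-_Wg_n\,dW\to c+\int_{(0,\cdot]}F\,dW$ uniformly, hence $f_n\to f$ in $L^2_V(\mathbb{T})$, so $(f_n)$ is admissible and $f\in H_{W,V}(\mathbb{T})$. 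For uniqueness, if $F_1,F_2\in L^2_{W,0}(\mathbb{T})$ yield the same $f$ ($V$-a.e.), the càdlàg function $\Phi(x):=\int_{(0,x]}(F_1-F_2)\,dW$ is $V$-a.e.\ constant; because $V$ is strictly increasing, $dV$ is positive on every interval, and a right-continuous function that is $V$-a.e.\ equal to a constant must equal that constant everywhere. Letting $x\to0^+$ and using $W(0-)=W(0)$ forces $\Phi\equiv0$, i.e.\ $(F_1-F_2)\,dW\equiv0$, so $F_1=F_2$ in $L^2_W(\mathbb{T})$ and consequently $c_1=c_2$.

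The main obstacle I anticipate is the forward direction: an admissible sequence carries only Hilbert-space convergence in $L^2_V(\mathbb{T})$ together with Cauchyness of its $W$-derivatives, and one must squeeze out of this a genuine ($V$-a.e.) primitive. The key is that the derivatives converge uniformly once integrated, which both identifies $F$ and pins down the scalars $f_n(0)$; a secondary subtlety is the full-support property of $dV$ coming from strict monotonicity, which is what lets the uniqueness argument upgrade a $V$-a.e.\ statement to an everywhere statement.
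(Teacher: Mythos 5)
Your proof is correct and follows essentially the same route as the paper: in the forward direction you extract the $L^2_W$-limit $F$ of the derivatives and recover the constant by averaging over $\mathbb{T}$ with respect to $dV$, and in the converse you invoke Corollary~\ref{cr:1} to build an admissible sequence $f_n(x)=c+\int_{(0,x]}D^-_Wg_n\,dW$, exactly as the paper does. Your uniqueness argument (full support of $dV$ plus right-continuity upgrading the $V$-a.e.\ identity to an everywhere identity) is a welcome addition, since the paper asserts uniqueness without proof.
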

	\begin{proof}
		Let $f\in H_{W,V}(\mathbb{T})$. By the definition of energetic space and the relation (\ref{e4}), there exists an admissible sequence $(f_{n})_{n\in\mathbb{N}}$ such that $f_{n}\to f$ in $L^{2}_{V}(\mathbb{T})$ and $\left(D^{-}_{W}f_{n}\right)_{n\in\mathbb{N}}$ is Cauchy in $L^{2}_{W}(\mathbb{T})$ norm. Without loss of generality, we can suppose that $D^{-}_{W}f_{n}\to G$ in $L^{2}_{W}(\mathbb{T})$. Define $\displaystyle g(x)=\int_{(0,x]}G(\xi)dW(\xi)$ and note that $g(1)=0$ and
		$$g(y)-g(x)=\int_{(x,y]}G dW=\lim_{n\to \infty}\int_{(x,y]}D^{-}_{W}f_{n}dW=\lim_{n\to \infty}[f_{n}(y)-f_{n}(x)].$$
		Now, we need to prove that, for each fixed $y\in\mathbb{T}$, we have
		\begin{equation}\label{e5} \int_{\mathbb{T}}[f_{n}(y)-f_{n}(x)]dV(x)\to \int_{\mathbb{T}}[g(y)-g(x)]dV(x).
		\end{equation} 
	Indeed, we have that $f_{n}(y)-f_{n}(x)\to g(y)-g(x)$ for each fixed $y$, and by the H\"older's inequality
		$$|f_{n}(y)-f_{n}(x)|^2\le W(1)\int_{\mathbb{T}}\left(D^{-}_{W}f_{n}\right)^{2}dW,$$
		with the term on the right-hand of the above inequality being bounded due to the fact that $\left(D^{-}_{W}f_{n}\right)_{n\in\mathbb{N}}$ is Cauchy. We can now use the dominated convergence theorem to obtain (\ref{e5}). Note that the convergence $f_{n}\to f$ in $L_{V}^{2}(\mathbb{T})$ implies that, $V$-a.e, we have
		\begin{align*}
			V(1)f(y)%=\lim_{n\to\infty} V(1)f_{n}(y) 
                &= \lim_{n\to\infty}\left[V(1)f_{n}(y)-\int_{\mathbb{T}}f_{n}(x)dV(x)\right]+\lim_{n\to\infty}\int_{\mathbb{T}}f_{n}(x)dV(x)\\ &= V(1)g(y)-\int_{\mathbb{T}}g(x)dV+\int_{\mathbb{T}}fdV.
		\end{align*} 
		That is, for $V$-a.e. we have
		$f(y)= c+\int_{(0,y]}G(\xi)dW(\xi),$
		where $c=V(1)^{-1}\left(\int_{\mathbb{T}}fdV-\int_{\mathbb{T}}g(x)dV\right)$. 
		Conversely, if  $F\in L^{2}_{W,0}$ satisfies (\ref{c3}), we have by Corollary \ref{cr:1} that $\left\{D^{-}_{W}g; g\in \mathfrak{D}_{W,V}(\mathbb{T})\right\}$ is dense in the closed subspace $L^{2}_{W,0}(\mathbb{T})$. Now, choose $\left(D^{-}_{W}g_{n}\right)_{n\in\mathbb{N}}$ such that $D^{-}_{W}g_{n}\to F$ in $L^{2}_{W}(\mathbb{T})$, and let
		$f_{n}(x)=c+\int_{(0,x]}D^{-}_{W}g_{n}dW.$
		It follows from the dominated convergence theorem that $f_{n}\to f$ in $L^{2}_{V}(\mathbb{T})$ and $f_{n}\in \mathfrak{D}_{W,V}(\mathbb{T})$ for all $n\in\mathbb{N}$, that is, $g_{n}$ is admissible for $f$. \end{proof}
	\begin{remark}\label{sobagree}
		If we set $V(x)=x$ in the above Lemma, our space agrees with the energetic space considered in \cite{franco}, that is,  $H_{W,x}(\mathbb{T})=H^{1}_{2}(\mathbb{T})$, where $H^{1}_{2}(\mathbb{T})$ is the space introduced in \cite{franco}. 
		It is noteworthy that even though we defined our space in terms of one-sided derivatives, the resulting energetic space for that particular case in which $V(x)=x$ agrees with the space defined in \cite{franco}, where ``two-sided" derivatives were used. Note that the energetic space $H_2^1(\mathbb{T})$ in \cite{franco} coincides with the $W$-Sobolev space in \cite{wsimas} when the dimension is 1. This shows that our approach for the $W$-$V$-Sobolev space is a natural one, as it generalizes the standard $W$-Sobolev spaces, and in particular, our $W$-$V$-Sobolev space also agree with the standard Sobolev space when $V(x)=W(x)=x$.
	\end{remark}
% Based in the Definition \ref{sobolev1} and the proof of the Theorem \ref{sobchar}, if $F\in L^{2}_{W,0}$ satisfies  (\ref{c3}) for a given $f\in H_{W,V}(\mathbb{T})$ then, $F$ is unique and we denote $D^{-}_{W}f:=F$.

The next result is a $W,V$-version of the well-known Rellich-Kondrachov's theorem. The proof is similar to the proof in \cite[Lemma 3]{franco}. We will present the details here for completeness.

	\begin{theorem}\label{compactemb}
		The embedding $H_{W,V}(\mathbb{T})\hookrightarrow L^{2}_{V}(\mathbb{T})$ is compact.
	\end{theorem}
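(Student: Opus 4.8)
The plan is to follow the standard route to a Rellich--Kondrachov-type statement: show that the unit ball of $H_{W,V}(\mathbb{T})$ is totally bounded in $L^2_V(\mathbb{T})$. By Theorem \ref{sobchar}, any $f\in H_{W,V}(\mathbb{T})$ with $\|f\|_{1,2}\le 1$ can be written ($V$-a.e.) as $f(x)=c+\int_{(0,x]}F(s)\,dW(s)$ with $F=D^-_Wf\in L^2_{W,0}(\mathbb{T})$, and the energetic norm controls both $\|F\|_W$ and $\|f\|_V$, hence also $|c|$ via the Poincaré--Friedrichs inequality \eqref{i1} (or directly from $\|f\|_V\le 1$ together with the representation). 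So the family of relevant $f$'s is parametrized by a bounded set of constants $c$ and a bounded set of $W$-derivatives $F$ in $L^2_W$.

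First I would establish an equicontinuity-type estimate: for $x\le y$ in $\mathbb{T}$,
\[
|f(y)-f(x)| \;=\;\left|\int_{(x,y]}F(s)\,dW(s)\right|\;\le\; \|F\|_W\,\bigl(W(y)-W(x)\bigr)^{1/2}\;\le\;\bigl(W(y)-W(x)\bigr)^{1/2},
\]
by Cauchy--Schwarz with respect to $dW$. This is a ``$W$-modulus of continuity'' bound, uniform over the unit ball. Together with the uniform bound on $|c|$ (so the functions are uniformly bounded), this says the unit ball of $H_{W,V}(\mathbb{T})$, viewed inside the càdlàg functions, is uniformly bounded and has a uniform modulus of continuity measured against $W$. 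Since $W$ is càdlàg and increasing on the compact $\mathbb{T}$, it is uniformly ``continuous'' in the sense that for every $\varepsilon>0$ there is a finite partition $0=t_0<t_1<\cdots<t_m=1$ with $W(t_{j}-)-W(t_{j-1})<\varepsilon$ on each block; on each such block the oscillation of $f$ is at most $\varepsilon^{1/2}$ up to the jump of $W$ at the endpoints, which is itself controlled. I would then run an Arzel\`a--Ascoli argument adapted to this skeleton: given a bounded sequence $(f_n)$ in $H_{W,V}(\mathbb{T})$, pass to a subsequence converging at the (countably many) partition points and at all atoms of $W$ and of $V$; the $W$-modulus bound then upgrades this to convergence in $L^2_V(\mathbb{T})$ by splitting $\int_{\mathbb{T}}|f_n-f_m|^2\,dV$ over the blocks and bounding the contribution of each block by (oscillation)$^2 \times V(\text{block})$ plus the pointwise differences at the chosen points times the atom masses.

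Alternatively — and this is probably the cleaner writeup, mirroring \cite[Lemma 3]{franco} — I would use the explicit integral representation to factor the embedding through a fixed Hilbert--Schmidt-type operator. Define $T: L^2_{W,0}(\mathbb{T})\times\mathbb{R}\to L^2_V(\mathbb{T})$ by $T(F,c)(x)=c+\int_{(0,x]}F\,dW$; by Theorem \ref{sobchar} this is onto $H_{W,V}(\mathbb{T})$ and bounded, and $H_{W,V}(\mathbb{T})\hookrightarrow L^2_V(\mathbb{T})$ is its composition with the projection recovering $(D^-_Wf, f_{\mathbb{T}})$. The kernel operator $F\mapsto \int_{(0,x]}F\,dW$ from $L^2_W$ to $L^2_V$ has kernel $K(x,s)=\mathbf{1}_{\{s\le x\}}$, which lies in $L^2(dV(x)\,dW(s))$ since $W(1)V(1)<\infty$; hence it is Hilbert--Schmidt, in particular compact. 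Compactness of the embedding follows since it differs from this compact operator by the finite-rank map $c\mapsto c$. The main obstacle is purely bookkeeping: being careful with the half-open intervals $(0,x]$ versus $[0,x)$, the càdlàg/càglàd conventions (Remark \ref{orderVW}), and the atoms of $V$ and $W$, so that the Cauchy--Schwarz and Fubini steps are valid for genuinely atomic measures; none of the measure-theoretic inputs (Fubini, dominated convergence, Hilbert--Schmidt criterion) actually require non-atomicity, so the argument goes through verbatim, but the indexing must be handled with the same care exercised in the proof of Theorem \ref{thm:1}.
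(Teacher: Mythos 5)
Your argument is correct, but it follows a genuinely different route from the paper. The paper's proof is a ``soft'' weak-compactness argument: it writes $u_n = c_n + \int_{(0,x]}U_n\,dW$, extracts a weakly convergent subsequence $U_{n_k}\rightharpoonup U$ in $L^2_W(\mathbb{T})$, obtains pointwise convergence of $u_{n_k}(x)$ by testing the weak limit against $\mathbf{1}_{(0,x]}\in L^2_W(\mathbb{T})$, and concludes in $L^2_V(\mathbb{T})$ by dominated convergence using the uniform bound $|u_{n_k}(x)|^2\le 2c_{n_k}^2+2W(1)\|U_{n_k}\|_W^2$. Your second route is the cleaner of your two and is arguably sharper than the paper's: factoring the embedding as the rank-one map $c\mapsto c\cdot 1$ plus the Volterra-type operator $F\mapsto\int_{(0,x]}F\,dW$ with kernel $\mathbf{1}_{\{0<s\le x\}}\in L^2(dV\otimes dW)$ shows the embedding is in fact Hilbert--Schmidt, not merely compact, and the finiteness of $W(1)V(1)$ is the only input (atoms are irrelevant, as you note); the one small imprecision is that the constant $c$ in the representation of Theorem \ref{sobchar} is not literally $f_{\mathbb{T}}$, but it is an affine function of $f_{\mathbb{T}}$ and $F$ controlled by the energetic norm, so the factorization through $L^2_{W,0}(\mathbb{T})\times\mathbb{R}$ is indeed bounded. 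Your first route (total boundedness via the $W$-modulus bound $|f(y)-f(x)|\le\|F\|_W(W(y)-W(x))^{1/2}$ and a Skorokhod-style partition isolating the large atoms of $W$) also works but requires the most bookkeeping, precisely at the jump points where equicontinuity fails and must be replaced by pointwise convergence along a diagonal subsequence; the paper's weak-compactness argument and your Hilbert--Schmidt argument both avoid this delicacy entirely.
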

	\begin{proof}
		Let $u_{n}(x)=c_{n}+\int_{(0,x]}U_{n}dW$ be a bounded sequence in $H_{W,V}(\mathbb{T})$, where $U_{n}\in L^{2}_{V,0}(\mathbb{T})$, $n\in\mathbb{N}$. By the definition of the energetic norm, we have that $\|u_{n}\|^{2}_{1,2}=\|U_{n}\|_{W}^{2}+\|u_{n}\|_{V}\geq\|U_{n}\|_{W}^{2}$. Therefore, $U_{n}$ is bounded in $L^{2}_{W}(\mathbb{T})$. We can then use Cauchy-Swartz's inequality to obtain that $\int_{(0,x]}U_{n}dW$ is bounded in $L^{2}_{V}(\mathbb{T})$. Therefore, 
		$$c_{n}=u_{n}(x)-\int_{(0,x]}U_{n}dW$$
		 is a bounded sequence in $L^{2}_{V}(\mathbb{T})$ or, equivalently (since $\mathbb{T}$ is compact with finite $V$-measure), $c_{n}$ is a bounded sequence in $\mathbb{R}$. Further, since $L^2_W(\mathbb{T})$ is a separable Hilbert space and the sequence $(U_{n})_{n}$ is bounded in $L^{2}_{W}(\mathbb{T})$, there is a subsequence $(U_{n_{k}})_{k\in\mathbb{N}}$ such that $$U_{n_{k}} \rightharpoonup U$$ for some $U \in L^{2}_{W}(\mathbb{T})$. This shows that $c_{n_{k}}\to c$ for some $c\in \mathbb{R}$. Moreover, for all $x\in\mathbb{T}$, we have $\textbf{1}_{(0,x]}\in L^{2}_{W}(\mathbb{T})$, and thus
		$$\lim_{n\to\infty}u_{n_{k}}(x)=\lim_{n\to\infty}\left\{c_{n_{k}}+\int_{(0,x]}U_{n_k}dW\right\}=c+\int_{(0,x]}UdW.$$
		In addition, $\int_{\mathbb{T}}UdW=0$. Finally, $|u_{n_{k}}(x)|^2\le 2c_{n_{k}}^2+2[W(1)]\|U_{n_{k}}\|_{W}^2$. Therefore, we can use the
		 dominated convergence theorem to conclude that $u_{n_{k}}$ converges to $c+\int_{(0,x]}UdW$ in $L^{2}_{V}(\mathbb
		{T})$.
	\end{proof}
	
	Our goal now is to define the $W$-$V$-generalized Laplacian operator. The content of the following definition is the main reason we started by defining $H_{W,V}(\mathbb{T})$ as an energetic space.
	
	\begin{definition}\label{defi5friedri}
		Let $\mathcal{A}: \mathcal{D}_{W,V}(\mathbb{T})\subseteq L^{2}_{V}(\mathbb{T})\to L^{2}_{V}(\mathbb{T})$ be the Friedrichs Extension of the operator $I-D^{+}_{V}D^{-}_{W}$ (we refer the reader to Zeidler \cite[Section 5.5]{zeidler} for further details on Friedrichs extensions). 
		We can characterize the domain $\mathcal{D}_{W,V}(\mathbb{T})$ as the set of functions $f\in L^{2}_{V}(\mathbb{T})$ such that there exists $\mathfrak{f}\in L^{2}_{V,0}(\mathbb{T})$ satisfying 
		\begin{equation}\label{c4}
			f(x)=a+W(x)b+\int_{(0,x]}\int_{[0,y)} \mathfrak{f}(s) dV(s)dW(y),
		\end{equation}
		where $b$ is satisfies the relation
		$$bW(1)+\int_{(0,1]}\int_{[0,y)} \mathfrak{f}(s) dV(s)dW(y)=0.$$
		Moreover,
		\begin{equation}\label{c5}
			-\int_{\mathbb{T}}D^{-}_{W}f D^{-}_{W}g dW= \langle\mathfrak{f},g\rangle_{V}
		\end{equation}
		for all g in $H_{W,V}(\mathbb{T}).$ 
	\end{definition}

\begin{remark}
	Expression \eqref{c5} follows from a straightforward adaptation of the arguments found in \cite[Lemma 4]{franco}.
\end{remark}

	Hence, by \cite[Theorem 5.5.c]{zeidler} and Theorem \ref{compactemb} above, the resolvent of the Friedrichs extension, $\mathcal{A}^{-1}$, is compact. Thus, there exists a complete ortonormal system of functions $(\nu_n)_{n\in\mathbb{N}}$ in $L^{2}_{V}(\mathbb{T})$ such that $\nu_{n}\in H_{W,V}(\mathbb{T})$ for all $n$, and $\nu_{n}$ solves the equation $\mathcal{A}\nu_n=\gamma_n \nu_n,$ for some $\{\gamma_n\}_{n\in\mathbb{N}}\subset\mathbb{R}.$ Furthermore,
	$$1\le \gamma_{1}\le\gamma_{2}\cdots\to\infty$$ 
	as $n\to\infty$. 
	
	\begin{definition}\label{laplacianWV}
		We define the $W$-$V$-Laplacian as 
		$$\Delta_{W,V}=I-\mathcal{A} : \mathcal{D}_{W,V}(\mathbb{T})\subseteq L^{2}_{V}(\mathbb{T})\to L^{2}_{V}(\mathbb{T}).$$
	\end{definition}
	
We have the following integration by parts formula with respect to the $W$-$V$-Laplacian:
	
	\begin{proposition}\label{intbypartsprop}
		(Integration by parts formula) For every $f\in\mathcal{D}_{W,V}(\mathbb{T})$ and $g\in H_{W,V}(\mathbb{T})$, the following expression holds:
		\begin{equation}\label{intbypartsVW}
			\langle -\Delta_{W,V}f,g\rangle_{V} = \int_{\mathbb{T}}D^{-}_{W}fD^{-}_{W}gdW.
		\end{equation}
	\end{proposition}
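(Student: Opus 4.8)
The plan is to unwind the definition $\Delta_{W,V}=I-\mathcal{A}$ and exploit the characterization of the Friedrichs extension $\mathcal{A}$ recorded in Definition \ref{defi5friedri}, together with the integration by parts formula of Theorem \ref{thm:1}(b). Since $-\Delta_{W,V}=\mathcal{A}-I$, it suffices to identify $\mathcal{A}f$ for $f\in\mathcal{D}_{W,V}(\mathbb{T})$: if $\mathfrak{f}\in L^{2}_{V,0}(\mathbb{T})$ is the function associated to $f$ via \eqref{c4}, I claim $\mathcal{A}f=f-\mathfrak{f}$, so that $-\Delta_{W,V}f=-\mathfrak{f}$ and the assertion follows at once from \eqref{c5}.

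First I would recall that, by the general theory of Friedrichs extensions (Zeidler \cite[Section 5.5]{zeidler}), $\mathcal{A}f$ is the unique $h\in L^{2}_{V}(\mathbb{T})$ such that the bilinear form generating the energetic space $H_{W,V}(\mathbb{T})$ evaluates as $b(f,g)=\langle h,g\rangle_{V}$ for every $g\in H_{W,V}(\mathbb{T})$. By Theorem \ref{thm:1}(b), on $\mathfrak{D}_{W,V}(\mathbb{T})$ one has
\[
\langle (I-D^{+}_{V}D^{-}_{W})f,g\rangle_{V}=\langle f,g\rangle_{V}+\int_{\mathbb{T}}D^{-}_{W}f\,D^{-}_{W}g\,dW,
\]
so this last expression is precisely the closed sesquilinear form $b(f,g)$ associated to $\|\cdot\|_{1,2}$ on $H_{W,V}(\mathbb{T})$, where $D^{-}_{W}g$ denotes the unique representing function furnished by Theorem \ref{sobchar}.

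Next I would combine this with \eqref{c5}: for $f\in\mathcal{D}_{W,V}(\mathbb{T})$ and every $g\in H_{W,V}(\mathbb{T})$,
\[
b(f,g)=\langle f,g\rangle_{V}+\int_{\mathbb{T}}D^{-}_{W}f\,D^{-}_{W}g\,dW=\langle f,g\rangle_{V}-\langle\mathfrak{f},g\rangle_{V}=\langle f-\mathfrak{f},g\rangle_{V}.
\]
Since $\mathfrak{D}_{W,V}(\mathbb{T})\subseteq H_{W,V}(\mathbb{T})$ is dense in $L^{2}_{V}(\mathbb{T})$ by Theorem \ref{thm:1}(a), the element of $L^{2}_{V}(\mathbb{T})$ representing the form is unique, whence $\mathcal{A}f=f-\mathfrak{f}$. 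Therefore $-\Delta_{W,V}f=(\mathcal{A}-I)f=-\mathfrak{f}$, and applying \eqref{c5} once more yields
\[
\langle -\Delta_{W,V}f,g\rangle_{V}=\langle -\mathfrak{f},g\rangle_{V}=\int_{\mathbb{T}}D^{-}_{W}f\,D^{-}_{W}g\,dW
\]
for every $g\in H_{W,V}(\mathbb{T})$, which is the claimed identity.

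The only delicate point is justifying that the form defining the Friedrichs extension carries the cross term $\int_{\mathbb{T}}D^{-}_{W}f\,D^{-}_{W}g\,dW$, i.e. passing from the formal operator $I-D^{+}_{V}D^{-}_{W}$ on $\mathfrak{D}_{W,V}(\mathbb{T})$ to the closed form on $H_{W,V}(\mathbb{T})$; this is exactly what the integration by parts identity of Theorem \ref{thm:1}(b) provides, and everything else is a direct manipulation of the definitions and the density statements already established. One should also note in passing that $D^{-}_{W}f$ and $D^{-}_{W}g$ are well-defined for $f\in\mathcal{D}_{W,V}(\mathbb{T})\subseteq H_{W,V}(\mathbb{T})$ and $g\in H_{W,V}(\mathbb{T})$ by virtue of Theorem \ref{sobchar}.
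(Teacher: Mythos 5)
Your proof is correct and follows essentially the same route as the paper's: both reduce the claim to the identification $\Delta_{W,V}f=\mathfrak{f}$ from \eqref{c4} and then invoke \eqref{c5}. The only difference is that you justify this identification in detail via the form representation of the Friedrichs extension, whereas the paper simply asserts it.
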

	\begin{proof}
		We have by (\ref{c4}) that $\Delta_{W,V}f=\mathfrak{f}$. The result thus follows by (\ref{c5}).
	\end{proof} 

Note that in expression \eqref{intbypartsVW}, the spaces change (more precisely, the measures in which the integrals are being taken with respect to change). On the left-hand side of expression \eqref{intbypartsVW}, the integration is being done on $L^2_V(\mathbb{T})$, with respect to the measure induced by $V$, whereas in the right-hand side of \eqref{intbypartsVW}, the integration is being done on $L^2_W(\mathbb{T})$, with respect to the measure induced by $W$.

We conclude this section by mentioning that, given the c\`adl\`ag function $W$, if we take $V=\overline{W}$ (see Remark \ref{difop}), then $W$ and $V$ induce the same measure over $\mathbb{T}$, so similar results, regarding the operator $\frac{d}{dW}\left(\frac{d}{dW}\right)$, may be obtained as a particular case of the present work.

	\section{The Space $C^{\infty}_{W,V}(\mathbb{T})$}\label{sect4}
	
	Our goal in this section is provide a ``nice'' space for functions in which the lateral derivative exist pointwisely. This means, for example, that whenever we have a function on such spaces, say $f$, the Laplacian $\Delta_{W,V}f$ will agree with $D_V^+D_W^-f$, whereas this last expression exists for every point in $\mathbb{T}$. 
	This was a problem with the previous $W$-Sobolev spaces \cite{wsimas,simasvalentim2,simasvalentimjde}, and also when people dealt with such operators such as in \cite{franco, frag} and \cite{farfansimasvalentim}. %All these cases considered the operator $D_xD_W$ (without the lateral derivatives). 
	The problem is that the eigenfunctions for that operator were obtained in $L^2(\mathbb{T})$ and there were no satisfactory regular spaces for such functions. 
	
	With that goal in mind, let us consider the following proposition.
    
\begin{proposition}\label{regregreg}
    If the set of discontinuity points of $W$ is dense in $\mathbb{T}$ and  $f\in \mathfrak{D}_{W}$ (same space from the Remark \ref{difop}) satisfies $\Delta_{W,x}f=D_{x}D_{W}f=\alpha f$, for some $\alpha\neq 0$, then $f\equiv 0$. That is, there are no non-trivial eigenvalues of $\Delta_{W,x}$ inside $\mathfrak{D}_{W}$.
\end{proposition}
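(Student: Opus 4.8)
The plan is to exploit the rigidity that, for $f\in\mathfrak{D}_W$, the function $D_Wf$ is forced to be continuous while $f$ itself can jump only where $W$ jumps, with the jump of $f$ at such a point being exactly $D_Wf(x)\,\big(W(x)-W(x-)\big)$. If $f$ turns out to be continuous, this kills $D_Wf$ on the (dense) discontinuity set of $W$, hence everywhere, which by Lemma \ref{lm:1} makes $f$ constant, and then $\alpha\neq 0$ forces $f\equiv 0$.

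First I would record the structural representation. Taking $V(x)=x$ in Remark \ref{difop}, membership $f\in\mathfrak{D}_W$ means that $f$ is c\`adl\`ag, $D_Wf$ exists as a genuine two-sided limit, $D_Wf$ is continuous and differentiable, and $D_xD_Wf$ is continuous. Applying Lemma \ref{lm:2} with $g=D_xD_Wf$ (which is continuous) one gets $D_Wf(x)=D_Wf(0)+\int_{[0,x)}g(s)\,ds$ and hence $f(x)=f(0)+\int_{(0,x]}D_Wf(y)\,dW(y)$. Letting $y\uparrow x$ and using that $W$ is c\`adl\`ag, this gives the jump identity
$$f(x)-f(x-)=D_Wf(x)\,\big(W(x)-W(x-)\big),\qquad x\in\mathbb{T}.$$

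Next I would use the eigenvalue equation to upgrade the regularity of $f$. Since $D_xD_Wf=\alpha f$ and the left-hand side is continuous by the very definition of $\mathfrak{D}_W$, the function $f=\alpha^{-1}D_xD_Wf$ is continuous; here $\alpha\neq 0$ is essential. Combining continuity of $f$ with the jump identity, at every discontinuity point $x$ of $W$ we have $W(x)-W(x-)>0$ and $f(x)-f(x-)=0$, forcing $D_Wf(x)=0$. As the discontinuity set of $W$ is dense in $\mathbb{T}$ and $D_Wf$ is continuous, we conclude $D_Wf\equiv 0$ on $\mathbb{T}$.

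Finally I would close the argument: $D_Wf\equiv 0$ means $D_W^-f\equiv 0$, so Lemma \ref{lm:1} forces $f$ to be a constant, say $f\equiv c$. Then $D_xD_Wf\equiv 0$, and the equation $D_xD_Wf=\alpha f$ gives $\alpha c=0$; since $\alpha\neq 0$, $c=0$, i.e. $f\equiv 0$. The only mildly delicate point is the jump identity $f(x)-f(x-)=D_Wf(x)\big(W(x)-W(x-)\big)$ together with the fact that continuity of $D_xD_Wf$ is part of the definition of $\mathfrak{D}_W$; everything else is routine bookkeeping with the integral representations from Lemma \ref{lm:2}.
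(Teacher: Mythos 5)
Your proof is correct and follows the same overall strategy as the paper's: use the eigenvalue equation to force $f$ to be continuous, and then show that continuity is incompatible with being a non-trivial eigenfunction when the jumps of $W$ are dense. The two sub-steps are, however, justified differently. For the continuity of $f$, the paper does not read it off the definition of $\mathfrak{D}_W$ as you do; instead it observes that $D_xD_Wf=\alpha f$ is c\`adl\`ag and, being a classical derivative (of $D_Wf$), must satisfy the intermediate value property, so Darboux's theorem forces $D_xD_Wf$, and hence $f$, to be continuous. Your shortcut is legitimate under the description of $\mathfrak{D}_W$ given in Remark \ref{difop} (where continuity of $D_xD_Wf$ is built in), while the paper's Darboux argument covers the slightly weaker hypothesis that $D_xD_Wf$ merely exists. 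For the final step, the paper simply cites a remark from an external reference to conclude that a continuous $f$ must vanish; you instead supply the explicit mechanism via the jump identity $f(x)-f(x-)=D_Wf(x)\bigl(W(x)-W(x-)\bigr)$, the density of the jump set, the continuity of $D_Wf$, and Lemma \ref{lm:1}. That identity is correctly derived from the representation $f(x)=f(0)+\int_{(0,x]}D_Wf\,dW$, so your argument is sound and has the advantage of being self-contained where the paper defers to a citation.
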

\begin{proof}
    We know that if $f$ admits the $W$-derivative, then $f$ is c\`adl\`ag and typically discontinuous, with its discontinuity points being cointained in the set of discontinuity points of $W$. According to the definition of the operator $D_xD_W$, we have that $D_W f$ is differentiable in the classical sense and 
    \begin{equation}\label{soulreg}
        D_x D_W f = \alpha f.
    \end{equation}
    So, (\ref{soulreg}) give us that $D_{x}D_{W}f$ is c\`adl\`ag, since $f$ is c\`adl\`ag. This means that we have a c\`adl\`ag function that is the derivative of some function. We can now use Darboux's theorem from real analysis to conclude that $D_x D_W f$ must be continuous, which in turn, implies that $f$ must be continuous and therefore identically zero, according to the remark given in \cite[Section 2.1]{frag}.
\end{proof}
 %We will now explain the reason for that. Let $f$ be an eigenfunction for $D_xD_W$ and consider the set of discontinuity points of $W$ dense in $\mathbb{T}$.  
 %This is not expected to be t.rue. 
	%Indeed, one can consider the case in which the set of discontinuity points of $W$ is dense. 
	Proposition \ref{regregreg} explains why, so far, no pointwise higher-order regularity results have been found for such operators. Indeed, in \cite{simasvalentimjde}, to be able to do point evaluation on ``regular'' functions they needed to define a new space of test functions and, also, prove several results to circumvent the fact that the ``natural'' space spanned by the eigenfunctions was seen as a subspace of $L^2(\mathbb{T})$, and thus, the point evaluation was not well-defined.
	
	We will now define our space of regular functions in which we can apply the operators pointwisely. We will also show that the eigenfunctions of $\Delta_{W,V}$ belong to this space.
	
Start noticing that if $\nu\in L^2_V(\mathbb{T})$ is an eigenvector of $\Delta_{W,V}$ with corresponding eigenvalue $\lambda$ if, and only if, $\nu$ is an eigenvector of $\mathcal{A}$ with eigenvalue $\gamma=1+\lambda$. We, then, have that $\{\nu_{n}\}_{n\in \mathbb{N}}$ is a complete orthonormal system of $L^{2}_{V}(\mathbb{T})$ composed by eigenvectors of $-\Delta_{W,V}$ and each $\nu_{n}$ has associated eigenvalue $\lambda_n = \gamma_n - 1$. Furthermore, $\nu_{n}\in H_{W,V}(\mathbb{T})$ for all $n$. Moreover, we have that
	$$0=\lambda_{0}\le \lambda_{1}\le\lambda_{2}\cdots\to\infty,$$
	as $n\to\infty$, where for each $k\in\mathbb{N} \setminus \{0\}$, $\lambda_k$ satisfies
	
	\begin{equation}\label{autovt}
		-\Delta_{W,V}\nu_{k}=\lambda_{k} \nu_{k}.
	\end{equation} 
	%get the W-left derivative in the measure sense, we have \begin{equation}\label{e7}
	%-\Delta_{V,W}\left(D^{-}_{W}\nu_{k}\right)=D_{W}^{-}\left(-\Delta_{W,V}\nu_{k}\right)=\lambda_{k}D_{W}^{-}\nu_{k}.   
	%\end{equation}
	%by the analogue of Theorem \ref{sobchar} for $H_{V,W}(\mathbb{T})$, $D^{-}_{W}\nu_{k}$ have an version c\`agl\`ad, and (\ref{e7}), and by analogue theory for the operator $\Delta_{V,W}$, $\int_{\mathbb{T}} D^{-}_{W}\nu_{k}dW=0$, i.e. $\nu_{k}$ has a c\`adl\`ag version function, $\int_{\mathbb{T}}\nu_{k}dW=0$ and $D^{-}_{W}\nu_{k}$ admits a c\`agl\`ad version with $\int_{\mathbb{T}} D^{-}_{W}\nu_{k}dW=0$.
	
To state the our next regularity result we need to define the following sets : 
	$$C_{0}(\mathbb{T}):=\left\{f:\mathbb{T}\to\mathbb{R}; f\;\;\mbox{is c\`adl\`ag},\;\int_{\mathbb{T}}fdV=0\right\},$$
	$$C_{1}(\mathbb{T}):=\left\{f:\mathbb{T}\to\mathbb{R}; f\;\;\mbox{is c\`agl\`ad}\int_{\mathbb{T}}fdW=0\right\},$$
	and
	$$C^{n}_{W,V,0}(\mathbb{T}):=\left\{f\in C_{0}(\mathbb{T}); D^{(k)}_{W,V}f\hbox{ exists and }D^{(k)}_{W,V}f\in C_{\sigma(k)}(\mathbb{T}),\,\forall k\le n\right\},$$
	where 
	\[D^{(n)}_{W,V}:=\left\{ \begin{array}{ll}
		\underbrace{D^{-}_{W}D^{+}_{V} \ldots D^{-}_{W}}_{n-factors},\;\mbox{if}\; n\; \mbox{is odd}; \\
		\underbrace{D^{+}_{V}D^{-}_{W} \ldots D^{-}_{W}}_{n-factors},\;\mbox{if}\; n\; \mbox{is even},\end{array} \right. \quad \hbox{and} \quad
	\sigma(n):=\left\{ \begin{array}{ll}
		1,\;\mbox{if}\; n\; \mbox{is odd}; \\
		0,\;\mbox{if}\; n\; \mbox{is even}.\end{array} \right. \] 
	
	We are now in a position to state and prove our main regularity result. Note also that we will also define our first space of regular functions, namely, $C^{\infty}_{W,V,0}(\mathbb{T})$. Later we will increase it by adding (in the sum of spaces sense) the constant functions.
	
	\begin{theorem}[Regularity of eigenfunctions]\label{eigenreg} The solutions of 
		\[\left\{ \begin{array}{ll}
			-\Delta_{W,V}\psi=\lambda \psi,\; \lambda\in\mathbb{R}\setminus\{0\};\\
			\psi\in \mathcal{D}_{W,V}(\mathbb{T})\end{array} \right. \] 
		belong to \begin{equation}\label{cinfinity}
			C^{\infty}_{W,V,0}(\mathbb{T}):=\bigcap_{n\in\mathbb{N}}C^{n}_{W,V,0}(\mathbb{T}).
		\end{equation}
	\end{theorem}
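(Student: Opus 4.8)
The plan is to prove, by induction on $n$, that any solution $\psi$ of the eigenproblem $-\Delta_{W,V}\psi = \lambda\psi$ with $\lambda\neq 0$ belongs to $C^n_{W,V,0}(\mathbb{T})$ for every $n$, from which membership in $C^\infty_{W,V,0}(\mathbb{T})=\bigcap_n C^n_{W,V,0}(\mathbb{T})$ follows immediately. The starting observation is that $\psi\in\mathcal{D}_{W,V}(\mathbb{T})$ already gives, via the representation \eqref{c4}, that $\psi(x)=a+W(x)b+\int_{(0,x]}\int_{[0,y)}\mathfrak{f}(s)\,dV(s)\,dW(y)$ with $\mathfrak{f}=\Delta_{W,V}\psi$; but the eigenvalue equation says $\mathfrak{f} = -\lambda\psi$, so $\psi$ satisfies the self-referential integral identity
\begin{equation}\label{selfref}
	\psi(x) = a + W(x)b - \lambda\int_{(0,x]}\int_{[0,y)}\psi(s)\,dV(s)\,dW(y).
\end{equation}
This is the engine of the whole argument: the right-hand side is manifestly more regular than $\psi$ is assumed to be, and it \emph{is} $\psi$, so we can bootstrap.

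\textbf{Base case and first bootstrapping step.} First I would record the base case: since $\psi\in\mathcal{D}_{W,V}(\mathbb{T})\subseteq L^2_V(\mathbb{T})$ and, by the characterization in Definition \ref{defi5friedri} together with Theorem \ref{sobchar}, $\psi$ has a c\`adl\`ag representative with $\int_{\mathbb{T}}\psi\,dV$ not necessarily zero --- so strictly I would first reduce to the mean-zero case by noting that $\lambda\neq 0$ and $-\Delta_{W,V}\psi=\lambda\psi$ force $\int_{\mathbb{T}}\psi\,dV = -\lambda^{-1}\int_{\mathbb{T}}\Delta_{W,V}\psi\,dV\cdot(\text{something})$; more cleanly, integrating \eqref{intbypartsVW} against $g\equiv 1$ gives $\lambda\int_{\mathbb{T}}\psi\,dV = -\langle\Delta_{W,V}\psi,1\rangle_V=\int_{\mathbb{T}}D^-_W\psi\,D^-_W 1\,dW=0$, so $\psi\in C_0(\mathbb{T})$. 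That settles $\psi\in C^0_{W,V,0}(\mathbb{T})$. Now from \eqref{selfref}, the function $y\mapsto\int_{[0,y)}\psi(s)\,dV(s)$ is c\`agl\`ad (left-continuous with right limits, since it is an indefinite $V$-integral of a bounded --- because c\`adl\`ag on a compact set --- function), and moreover $D^+_V$ of it equals $\psi$ pointwise everywhere on $\mathbb{T}$; one checks $\int_{[0,1)}\psi\,dV = \int_{\mathbb{T}}\psi\,dV=0$, so it lies in $C_1(\mathbb{T})$. Hence $D^-_W\psi$ exists pointwise, equals $b - \lambda\int_{[0,\cdot)}\psi\,dV$ (up to the additive constant absorbed correctly), is c\`agl\`ad, has $\int_{\mathbb{T}}D^-_W\psi\,dW = 0$ by the constraint in \eqref{c4}, and $D^+_V D^-_W\psi = -\lambda\psi$ is c\`adl\`ag. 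That is exactly $\psi\in C^1_{W,V,0}(\mathbb{T})$ and $\psi\in C^2_{W,V,0}(\mathbb{T})$.

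\textbf{Induction and the alternating structure.} The inductive step then iterates this: assuming $\psi\in C^n_{W,V,0}(\mathbb{T})$, I want $D^{(n)}_{W,V}\psi\in C^{1}$ in the appropriate alternating sense so that $D^{(n+1)}_{W,V}\psi$ exists and lands in $C_{\sigma(n+1)}(\mathbb{T})$. The key algebraic fact to exploit is that differentiating \eqref{selfref} $k$ times in the $D_{W,V}$-sense peels off one integral at a time and, because of the eigen-relation, reproduces $\psi$ (times $-\lambda$, possibly after pairs of derivatives): concretely $D^{(2)}_{W,V}\psi = -\lambda\psi$, hence $D^{(2m)}_{W,V}\psi = (-\lambda)^m\psi$ and $D^{(2m+1)}_{W,V}\psi = (-\lambda)^m D^-_W\psi$, for all $m$. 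So at every even order the derivative is a scalar multiple of $\psi\in C_0(\mathbb{T})$, and at every odd order it is a scalar multiple of $D^-_W\psi\in C_1(\mathbb{T})$; in both cases the next derivative exists pointwise (apply $D^-_W$ to the $C_0$ representative using \eqref{selfref}, or $D^+_V$ to the $C_1$ representative) and has the membership and zero-mean properties demanded by $C_{\sigma(\cdot)}(\mathbb{T})$. Thus the two c\`agl\`ad/c\`adl\`ag alternation in the definition of $C^n_{W,V,0}$ is matched exactly by the $W$-then-$V$ alternation in $D^{(n)}_{W,V}$, and the induction closes.

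\textbf{Main obstacle.} I expect the real work to be not the conceptual bootstrap but the careful bookkeeping around the endpoint behavior and the zero-mean conditions: verifying at each stage that the indefinite integral $\int_{[0,\cdot)}$ or $\int_{(0,\cdot]}$ against the correct measure genuinely produces a c\`agl\`ad (resp. c\`adl\`ag) function --- which is where the c\`adl\`ag/c\`agl\`ad hypotheses on $V$ and $W$ and the periodicity condition \eqref{periodic} get used --- and that $\int_{\mathbb{T}}D^{(k)}_{W,V}\psi\,dW = 0$ or $\int_{\mathbb{T}}D^{(k)}_{W,V}\psi\,dV = 0$ holds at the appropriate parity, which follows from the constraint equations attached to the representation \eqref{c4} and from the mean-zero reduction for $\psi$ itself, propagated through the identities $D^{(2m)}_{W,V}\psi=(-\lambda)^m\psi$. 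The subtlety flagged in Remark \ref{orderVW} --- that one keeps switching between the $W,V$ and $V,W$ worlds --- is precisely what must be tracked with care here; everything else is a direct consequence of \eqref{selfref} and Lemma \ref{lm:1}.
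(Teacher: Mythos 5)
Your proposal is correct and follows essentially the same route as the paper: both extract the self-referential integral identity from the representation \eqref{c4} of $\mathcal{D}_{W,V}(\mathbb{T})$ together with $\Delta_{W,V}\psi=-\lambda\psi$, differentiate once to identify $D^-_W\psi = b-\lambda\int_{[0,\cdot)}\psi\,dV$ as a c\`agl\`ad mean-zero function, differentiate again to recover $-\lambda\psi$, and close by induction. Your explicit formulas $D^{(2m)}_{W,V}\psi=(-\lambda)^m\psi$ and $D^{(2m+1)}_{W,V}\psi=(-\lambda)^m D^-_W\psi$ simply spell out the induction that the paper leaves implicit, and your derivation of $\int_{\mathbb{T}}\psi\,dV=0$ via the integration by parts formula is an equivalent substitute for the paper's appeal to $\mathfrak{f}\in L^2_{V,0}(\mathbb{T})$.
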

	\begin{proof}
		Begin by observing that we have, by (\ref{c4}), that
		\begin{equation}\label{eq:n}
			\psi(x)=a+bW(x)-\lambda\int_{(0,x]}\int_{[0,y)} \psi(s) dV(s)dW(y)
		\end{equation}
		$V$-a.e with $a$ and $b$ being determined by the relations
		\begin{equation}\label{eq:n+3}
			bW(1)-\lambda\int_{\mathbb{T}}\int_{[0,y)} \psi(s) dV(s)dW(y)=0\,;\,\,\,\int_{\mathbb{T}}\psi dV=0.
		\end{equation}
	This shows that $\psi\in C_0(\mathbb{T})$. This also shows that to conclude the result, it is enough to show that $D_W^-\psi\in C_1(\mathbb{T})$, $D_V^+(D^-_W\psi) \in C_0(\mathbb{T})$ and that $D_V^+(D^-_W\psi) = \psi$, $V$-a.e. Since, then, the remaining orders follow directly by induction.
	Therefore, the next step is to prove that $D_W^-\psi\in C_1(\mathbb{T})$. Indeed, observe that the function ${\int_{[0,y)}\psi(s)dV(s)}$ is c\`agl\`ad and
		\begin{equation}\label{eq:n+2}
			D_{W}^{-}\left(a+bW-\lambda\int_{(0,\cdot]}\int_{[0,\alpha)} \psi(\beta) dV(\beta)dW(\alpha)\right)(y)=b-\lambda\int_{[0,y)}\psi(s)dV(s)
		\end{equation}
		$W$-a.e. Note that (\ref{eq:n+3}) implies the mean zero conditions over (\ref{eq:n+2}). This shows  $D_W^-\psi\in C_1(\mathbb{T})$. Now, observe that  (\ref{eq:n}) implies
		\begin{equation}\label{eq:n+1}
			b-\lambda\!\int_{[0,y)}\hspace{-0.5cm}\psi(s)dV(s)=b-\lambda\int_{[0,y)}\!\!\left[a+\int_{(0,s]}\!\!\left(b-\lambda\!\int_{[0,\alpha)}\hspace{-0.5cm} \psi(\beta) dV(\beta)\right)dW(\alpha)\right]dV(s).
		\end{equation}
		Since $a+bW(x)-\lambda\int_{(0,x]}\int_{[0,y)} \psi(s) dV(s)dW(y)$ is c\`adl\`ag, it follows by (\ref{eq:n+1}) that
		\begin{equation}\label{eq:n+4}
			D^{+}_{V}\left(b-\lambda\int_{[0,\cdot)}\psi(\beta)dV(\beta)\right)(s)=a+\int_{(0,s]}\left(b-\lambda\int_{[0,\alpha)} \psi(\beta) dV(\beta)\right)dW(\alpha).
		\end{equation}
		$V$-a.e. Since (\ref{eq:n+3}) also implies the mean zero conditions over (\ref{eq:n+4}), this shows that $D_V^+(D^-_W\psi) \in C_0(\mathbb{T})$ and that $D_V^+(D^-_W\psi) = \psi$, $V$-a.e. 
	\end{proof}

Note that the notion of regularity provided by Theorem \ref{eigenreg} is a true notion of regularity, in the sense that we can evaluate any eigenfunction as well as their lateral derivatives of any order at every point of the domain. This is a major contribution to the regularity theory related to such operators. 

We can now increase our space by ``adding'' the constant functions.

\begin{definition}
	We define the space of infinitely differentiable $W$-$V$ functions as
	$$C^{\infty}_{W,V}(\mathbb{T}):=\langle 1\rangle \oplus C^{\infty}_{W,V,0}(\mathbb{T}),$$
	where $C^{\infty}_{W,V,0}(\mathbb{T})$ was defined in \eqref{cinfinity}. We also define, in an analogous manner, the spaces $C^{\infty}_{V,W,0}(\mathbb{T})$ and $C^{\infty}_{V,W}(\mathbb{T})$.
\end{definition}

\begin{remark}
	Observe that the spaces $C^\infty_{W,V}(\mathbb{T})$ and $C_{V,W}^\infty(\mathbb{T})$ have an explicit characterization that generalizes, in a standard manner, the space $C^\infty(\mathbb{T})$, which contributes to the notion that these regularity spaces are natural choices. 
\end{remark}

We have the following results regarding the spaces $C^{\infty}_{W,V}(\mathbb{T})$ and $C^{\infty}_{V,W}(\mathbb{T})$:

\begin{proposition}\label{densenesscinfinity}
	The spaces $C^{\infty}_{W,V}(\mathbb{T})$ and $C^{\infty}_{V,W}(\mathbb{T})$ are dense in $L^2_V(\mathbb{T})$ and $L^2_W(\mathbb{T})$, respectively. Furthermore, we have that the set $\left\{D^{-}_{W}g;g\in C^{\infty}_{W,V}(\mathbb{T})\right\}$ is dense in $L^2_{W,0}(\mathbb{T})$ and the set $\left\{D^{+}_{V}g;g\in C^{\infty}_{V,W}(\mathbb{T})\right\}$ is dense in $L^2_{V,0}(\mathbb{T})$.
\end{proposition}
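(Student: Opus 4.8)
The plan is to obtain all four density statements from the spectral decomposition associated with $\Delta_{W,V}$ (Theorem~\ref{compactemb}) together with the regularity result of Theorem~\ref{eigenreg}. Recall that $\{\nu_n\}_{n\in\mathbb{N}}$ is a complete orthonormal system of $L^2_V(\mathbb{T})$ consisting of eigenvectors of $-\Delta_{W,V}$ with $0=\lambda_0\le\lambda_1\le\cdots$. The eigenfunction $\nu_0$ is constant, since $-\Delta_{W,V}\nu_0=0$ forces, through~\eqref{c4}, that $\nu_0(x)=a+bW(x)$ with $bW(1)=0$; in particular the kernel of $\Delta_{W,V}$ is one-dimensional, so $\lambda_n>0$ for all $n\ge1$. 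By Theorem~\ref{eigenreg}, $\nu_n\in C^\infty_{W,V,0}(\mathbb{T})\subseteq C^\infty_{W,V}(\mathbb{T})$ for $n\ge1$, while $\nu_0\in\langle 1\rangle\subseteq C^\infty_{W,V}(\mathbb{T})$; hence $C^\infty_{W,V}(\mathbb{T})\supseteq\operatorname{span}\{\nu_n:n\in\mathbb{N}\}$, which is dense in $L^2_V(\mathbb{T})$. The corresponding statement for $C^\infty_{V,W}(\mathbb{T})$ in $L^2_W(\mathbb{T})$ is the same argument applied to the $V,W$-structure (Remark~\ref{orderVW}).

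For the statements about derivatives, note that $D^-_W$ kills constants, so it suffices to prove that $\operatorname{span}\{D^-_W\nu_n:n\ge1\}$ is dense in $L^2_{W,0}(\mathbb{T})$. Let $h\in L^2_{W,0}(\mathbb{T})$ be orthogonal, in $L^2_W(\mathbb{T})$, to every $D^-_W\nu_n$, and set $H(x):=\int_{(0,x]}h\,dW$. Then $H$ is c\`adl\`ag with $H(0)=H(1)=0$ (so it is well defined on $\mathbb{T}$), and Theorem~\ref{sobchar} gives $H\in H_{W,V}(\mathbb{T})$ with $D^-_W H=h$. Applying the integration by parts formula of Proposition~\ref{intbypartsprop} to $f=\nu_n\in\mathcal{D}_{W,V}(\mathbb{T})$ and $g=H$, and using $-\Delta_{W,V}\nu_n=\lambda_n\nu_n$,
\[
\lambda_n\langle\nu_n,H\rangle_V=\langle-\Delta_{W,V}\nu_n,H\rangle_V=\int_{\mathbb{T}}D^-_W\nu_n\,D^-_W H\,dW=\langle h,D^-_W\nu_n\rangle_W=0,
\]
so $\langle\nu_n,H\rangle_V=0$ for all $n\ge1$ (as $\lambda_n>0$). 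Completeness of $\{\nu_n\}$ and the fact that $\nu_0$ is constant then force $H$ to be constant $V$-a.e.; since $H$ is c\`adl\`ag and the measure induced by $V$ charges every nonempty open interval ($V$ being strictly increasing), $H$ equals that constant everywhere, and $H(0)=0$ gives $H\equiv0$. Hence $\int_{(0,x]}h\,dW=0$ for every $x$, i.e.\ $h=0$ in $L^2_W(\mathbb{T})$. The density of $\{D^+_V g:g\in C^\infty_{V,W}(\mathbb{T})\}$ in $L^2_{V,0}(\mathbb{T})$ follows in exactly the same way in the $V,W$-structure.

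I expect the only genuinely delicate point to be the passage from ``$H$ constant $V$-a.e.'' to ``$H$ constant everywhere'': this is where strict monotonicity of $V$ is used, making the full-$dV$-measure set on which $H$ is constant dense in $\mathbb{T}$, so that right-continuity of $H$ propagates the constant value to all points. Everything else is an assembly of results already established — the spectral decomposition of $\Delta_{W,V}$, the representation of $H_{W,V}(\mathbb{T})$ in Theorem~\ref{sobchar}, the integration by parts formula of Proposition~\ref{intbypartsprop}, and the regularity of eigenfunctions in Theorem~\ref{eigenreg}.
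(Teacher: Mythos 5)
Your proof is correct, and while the first half (density of $C^\infty_{W,V}(\mathbb{T})$ in $L^2_V(\mathbb{T})$ via Theorem~\ref{eigenreg} and completeness of the eigenbasis) coincides with the paper's argument, your treatment of the derivative sets is genuinely different. The paper proves that $\{D^-_W g : g\in C^\infty_{W,V}(\mathbb{T})\}$ is dense in $L^2_{W,0}(\mathbb{T})$ by invoking an intertwining relation between the two Laplacians: each nonzero eigenvalue of $\Delta_{W,V}$ is an eigenvalue of $\Delta_{V,W}$ with the same multiplicity, $D^-_W$ carries the $\Delta_{W,V}$-eigenspaces onto the $\Delta_{V,W}$-eigenspaces, and the nonzero-eigenvalue eigenvectors of $\Delta_{V,W}$ span $L^2_{W,0}(\mathbb{T})$. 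That argument is short but leans on spectral facts about $\Delta_{V,W}$ that the paper asserts rather than proves at that point. You instead run a direct orthogonal-complement argument: given $h\in L^2_{W,0}(\mathbb{T})$ orthogonal to every $D^-_W\nu_n$, you build the primitive $H=\int_{(0,\cdot]}h\,dW\in H_{W,V}(\mathbb{T})$ (Theorem~\ref{sobchar}), use the integration by parts formula of Proposition~\ref{intbypartsprop} and $\lambda_n>0$ to conclude $\langle \nu_n,H\rangle_V=0$ for $n\geq 1$, and then upgrade ``$H$ constant $V$-a.e.'' to ``$H$ constant everywhere'' via right-continuity and the strict monotonicity of $V$ — a point you correctly flag as the only delicate step, and which you handle properly. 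Your route is more self-contained (it uses only the spectral decomposition of $\Delta_{W,V}$ itself, not the correspondence with $\Delta_{V,W}$), and it is essentially the same computation the paper later performs in Lemma~\ref{lemaqfaltou} to show completeness of $\{\lambda_i^{-1/2}D^-_W\nu_i\}$ in $\mathcal{W}$; the paper's route, in exchange, makes the duality between the $W,V$- and $V,W$-structures explicit, which it exploits elsewhere. Both are valid, and there is no circularity in your use of Theorem~\ref{sobchar} and Proposition~\ref{intbypartsprop}, which precede this proposition in the text.
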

\begin{proof}
    First, observe that, by Theorem \ref{eigenreg} (along with its natural corresponding version to $\Delta_{V,W}$), the eigenvectors of $\Delta_{W,V}$ belong to $C^\infty_{W,V}(\mathbb{T})$ and that the eigenvectors of $\Delta_{V,W}$ belong to $C^\infty_{V,W}(\mathbb{T})$. This gives us the density of $C^\infty_{W,V}(\mathbb{T})$ in $L^2_V(\mathbb{T})$ and of $C^\infty_{V,W}(\mathbb{T})$ in $L^2_W(\mathbb{T})$. In order to proof the the second claim let $A=\left\{D^{-}_{W}g;g\in C^{\infty}_{W,V}(\mathbb{T})\right\}$. Further, observe that from the Lemma \ref{lm:1} the real number $0$ is an eigenvalue of $\Delta_{W,V}$ (same for $\Delta_{V,W}$) which is simple and has associated invariant space given by the space of constant functions over $\mathbb{T}$. Furthermore, every non null eigenvalue of $\Delta_{W,V}$ is an eigenvalue of $\Delta_{V,W}$ and each invariant subspace (that is, each eigenspace) has a fixed dimension. This implies that for each nonzero eigenvalue, we can find the set of the corresponding eigenvectors of $\Delta_{V,W}$ in the set $A$. Now, we can use the fact that the space generated by the set of eigenvectors of $\Delta_{V,W}$ associated to nonzero eigenvalues is dense in $L^{2}_{W,0}(\mathbb{T})$ to conclude that $A$ is dense in $L^2_{W,0}(\mathbb{T})$. In a similar manner, we prove that $\left\{D^{+}_{V}g;g\in C^{\infty}_{V,W}(\mathbb{T})\right\}$ is dense in $L^2_{V,0}(\mathbb{T})$.
	% The density  the density of $A=\left\{D^{+}_{V}g;g\in C^{\infty}_{V,W}(\mathbb{T})\right\}$ in $L^{2}_{V,0}(\mathbb{T})$ is due to the fact that every non null eigenvalue of $\Delta_{W,V}$ is an eigenvalue of $\Delta_{V,W}$ and each invariant subspace (that is, each eigenspace) have a fixed dimension. This implies that for each nonzero eigenvalue, we can find the set of the corresponding eigenvectors of $\Delta_{W,V}$  in $A$. Now, we can use the fact that the space generated set of eigenvectors associated to nonzero eigenvalues is dense in $L^{2}_{V,0}(\mathbb{T})$. 
 This proves the result.
\end{proof}

\section{$W$-$V$-Sobolev spaces and weak derivatives}\label{sec:wvweak}

Our goal now is to define the notion of $W$ and $V$ lateral weak derivatives and show that the $W$-$V$-Sobolev space can be viewed as a space of functions that have lateral weak derivative with respect to $W$.  Thus, let us define the notion of lateral weak derivative:
	
	\begin{definition}
		A function $f\in L^{2}_{V}(\mathbb{T})$ has a $W$-left \textit{weak} derivative if, and only if, for every $g\in  C^{\infty}_{V,W}(\mathbb{T})$, there exists $F\in L^{2}_{W}(\mathbb{T})$ such that 
		\begin{equation}\label{defWd}
			\int_{\mathbb{T}}fD_{V}^{+}gdV=-\int_{\mathbb{T}}FgdW.
		\end{equation}
	\end{definition}
	%\begin{remark}
	%Note that in $C^{\infty}_{W,V}(\mathbb{T})$ plays the role of $C^{\infty}_{c}(\mathbb{T})$ in the case of the usual laplacian on the torus where $C_{c}^{\infty}(\mathbb{T}).$
	%\end{remark}
	\begin{remark}
		By Proposition \ref{densenesscinfinity}, $C^{\infty}_{V,W}(\mathbb{T})$ is dense in $L^2_W(\mathbb{T})$.  This implies the uniqueness of the $W$-left weak derivative defined by \eqref{defWd}. We will denote for a moment the $W$-left weak derivative of $f$ by $\partial^{-}_{W}f.$ 
	\end{remark}
	\begin{remark}
		If there exists an $F$ satisfying (\ref{defWd}) for all $g\in C^{\infty}_{V,W}(\mathbb{T})$, then by taking $g\equiv 1$, we have $\int_{\mathbb{T}}F dW =0$, i.e., $F\in L^{2}_{W,0}(\mathbb{T}).$\end{remark}
	
	Now, define the ($W,V$)-Sobolev space $$\widetilde{H}_{W,V}(\mathbb{T})=\{f\in L^{2}_{V}(\mathbb{T});  \exists\partial _{W}^{-}f\in L^2_{W,0}(\mathbb{T}) \}.$$ 
	
	One can readily prove that $\widetilde{H}_{W,V}(\mathbb{T})$ is a Hilbert space with respect to the energetic norm $\|f\|^{2}_{W,V}=\|f\|^{2}_{V}+\|\partial_{W}^{-}f\|^{2}_{W}$. However, we actually have more:
	
	\begin{theorem}\label{sobequal}
		We have the following equality of sets $\widetilde{H}_{W,V}(\mathbb{T})=H_{W,V}(\mathbb{T})$
	\end{theorem}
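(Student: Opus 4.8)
The plan is to prove the two inclusions $H_{W,V}(\mathbb{T})\subseteq\widetilde H_{W,V}(\mathbb{T})$ and $\widetilde H_{W,V}(\mathbb{T})\subseteq H_{W,V}(\mathbb{T})$, both of which I would reduce to a single auxiliary fact: every function of the form $f(x)=c+\int_{(0,x]}F\,dW$ with $F\in L^2_{W,0}(\mathbb{T})$ and $c\in\mathbb{R}$ possesses the $W$-left weak derivative $F$. To establish this fact, I would first observe that any $\phi\in\mathfrak{D}_{W,V}(\mathbb{T})$ already satisfies the weak-derivative identity \eqref{defWd} with $F=D^{-}_{W}\phi$: this is precisely the periodic integration-by-parts formula \eqref{e3}, applied on $(0,1]$ and $[0,1)$ with $\phi$ in the role of the c\`adl\`ag $W$-differentiable factor and $g\in C^{\infty}_{V,W}(\mathbb{T})$ in the role of the c\`agl\`ad $V$-differentiable factor, the endpoint term $[g\phi]_0^1$ vanishing by the periodicity \eqref{periodic}. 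Then, using Corollary \ref{cr:1}, I would approximate $F$ in $L^2_W(\mathbb{T})$ by $D^{-}_{W}\phi_n$ with $\phi_n\in\mathfrak{D}_{W,V}(\mathbb{T})$; by Lemma \ref{lm:2} the $\phi_n$ can be normalised so that $\phi_n(x)=c+\int_{(0,x]}D^{-}_{W}\phi_n\,dW$, giving $\phi_n\to f$ uniformly (hence in $L^2_V(\mathbb{T})$) while $D^{-}_{W}\phi_n\to F$ in $L^2_W(\mathbb{T})$. Passing to the limit in \eqref{defWd} for the $\phi_n$ (both integrals converge, since $D^{+}_{V}g\in L^2_V(\mathbb{T})$ and $g\in L^2_W(\mathbb{T})$) delivers the fact; equivalently, it shows that the $\phi_n$ form a Cauchy sequence for the energetic norm of $\widetilde H_{W,V}(\mathbb{T})$, so that $f$ lies in this Hilbert space.

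The forward inclusion is then immediate: given $f\in H_{W,V}(\mathbb{T})$, Theorem \ref{sobchar} provides the representation $f=c+\int_{(0,\cdot]}D^{-}_{W}f\,dW$ ($V$-a.e.) with $D^{-}_{W}f\in L^2_{W,0}(\mathbb{T})$, so the auxiliary fact gives $f\in\widetilde H_{W,V}(\mathbb{T})$ with $\partial^{-}_{W}f=D^{-}_{W}f$ --- this incidentally reconciles the two pieces of notation for the generalized derivative.

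For the reverse inclusion I would take $f\in\widetilde H_{W,V}(\mathbb{T})$, set $F:=\partial^{-}_{W}f\in L^2_{W,0}(\mathbb{T})$ and $h(x):=\int_{(0,x]}F\,dW$. The auxiliary fact shows $h$ has the same $W$-left weak derivative $F$, so by linearity $u:=f-h$ satisfies $\int_{\mathbb{T}}u\,D^{+}_{V}g\,dV=0$ for all $g\in C^{\infty}_{V,W}(\mathbb{T})$. By Proposition \ref{densenesscinfinity}, the set $\{D^{+}_{V}g:g\in C^{\infty}_{V,W}(\mathbb{T})\}$ is dense in $L^2_{V,0}(\mathbb{T})$, hence $u$ is orthogonal in $L^2_V(\mathbb{T})$ to all of $L^2_{V,0}(\mathbb{T})$; since $L^2_V(\mathbb{T})=\langle 1\rangle\oplus L^2_{V,0}(\mathbb{T})$ orthogonally, $u$ is a constant $V$-a.e. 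Thus $f=u+\int_{(0,\cdot]}F\,dW$ $V$-a.e., and Theorem \ref{sobchar} puts $f$ back into $H_{W,V}(\mathbb{T})$.

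I expect the only real obstacle to be the careful bookkeeping in the periodic integration-by-parts step: one must check that the test functions in $C^{\infty}_{V,W}(\mathbb{T})$ genuinely satisfy the c\`agl\`ad / $V$-right-differentiable / c\`adl\`ag-$D^{+}_{V}$ hypotheses required by \eqref{e3}, and that the values at the glued endpoints $0$ and $1$ of $\mathbb{T}$ are handled consistently with \eqref{periodic} so that the boundary term genuinely cancels. Everything else is standard density and dominated-convergence type limiting.
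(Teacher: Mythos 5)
Your proposal is correct and follows essentially the same route as the paper's proof: both inclusions rest on the integration-by-parts identity \eqref{e3} for $\mathfrak{D}_{W,V}(\mathbb{T})$ against test functions in $C^{\infty}_{V,W}(\mathbb{T})$, approximation via Corollary \ref{cr:1}, the representation of Theorem \ref{sobchar}, and the density of $\{D^{+}_{V}g\}$ in $L^2_{V,0}(\mathbb{T})$ from Proposition \ref{densenesscinfinity}. Factoring the common limiting argument into your auxiliary fact is a purely organizational difference; the substance matches the paper's argument.
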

	\begin{proof}
		Let $f\in H_{W,V}(\mathbb{T})$ and $(f_{n})_{n}$ is admissible sequence for $f$. Then, by the integration by parts formula (Proposition \ref{intbypartsprop}), we have that
		\begin{equation}\label{x1}
			\int_{\mathbb{T}}f_{n}D^{+}_{V}gdV=-\int_{\mathbb{T}}D^{-}_{W}f_{n}gdW.
		\end{equation}
		for all $g\in C^{\infty}_{V,W}(\mathbb{T})$. So, we can take the limit as $n\to\infty$ to conclude that
		$$\int_{\mathbb{T}}fD^{+}_{V}gdV=-\int_{\mathbb{T}}D^{-}_{W}fgdW,$$
        for all $g\in C^{\infty}_{V,W}(\mathbb{T})$, i.e. $D^{-}_{W}f = \partial^{-}_{W}f$. This implies that ${H}_{W,V}(\mathbb{T})\subseteq \widetilde{H}_{W,V}(\mathbb{T}).$
		For the reverse inclusion, take $f\in \widetilde{H}_{W,V}(\mathbb{T}).$ Choose a sequence $(f_{n})_{n\in\mathbb{N}}$ with $f_{n}\in \mathfrak{D}_{W,V}$, such that $D^{-}_{W}f_{n}\to \partial^{-}_{W}f$ in $L^{2}_{W,0}(\mathbb{T})$. This implies
		\begin{equation}\label{15}
			f_{n}(x)-f_{n}(0)=\int_{(0,x]}D^{-}_{W}f_{n}dW\to\int_{(0,x]}\partial^{-}_{W}fdW.
		\end{equation}
		On the other hand, for all $g\in C^{\infty}_{V,W}(\mathbb{T})$ we have 
		\begin{equation*}\label{16}
			\int_{\mathbb{T}}\left(f_{n}-f_{n}(0)\right)D^{+}_{V}g dV = \int_{\mathbb{T}} f_{n}D^{+}_{V}g dV = -\int_{\mathbb{T}} D^{-}_{W}f_{n}g dW
		\end{equation*}
		as $n\to \infty$. Now, from (\ref{15}) follows that
		$$\int_{\mathbb{T}}\left(\int_{(0,x]}\partial^{-}_{W}fdW\right)D^{+}_{V}gdV=-\int_{\mathbb{T}}\partial^{-}_{W}f g dW=\int_{\mathbb{T}}fD^{+}_{V}g dV,$$
		i.e.
		\begin{equation}\label{3k}
			\int_{\mathbb{T}}\left(c+\int_{(0,x]}\partial^{-}_{W}fdW-f\right)D^{+}_{V}g dV=0.
		\end{equation}
		where $c$ was chosen such that $c+\int_{(0,x]}\partial^{-}_{W}fdW-f\in L^{2}_{V,0}(\mathbb{T})$. This shows that (\ref{3k}) is true for all $g\in C^{\infty}_{V,W}(\mathbb{T})$. Since $\left\{D^{+}_{V}g;g\in C^{\infty}_{V,W}(\mathbb{T})\right\}$ is dense in $L^{2}_{V,0}(\mathbb{T})$ (see Proposition \ref{densenesscinfinity}), we have that
		$f(x)=c+\int_{(0,x]}\partial^{-}_{W}fdW,$
		$V$-a.e. Thus, we have, from Theorem \ref{sobchar}, that $f\in H_{V,W}(\mathbb{T}).$
	\end{proof}

	\begin{remark}
		Note that Theorem \ref{sobequal}, in particular, shows that $\partial^{-}_{W}f=D^{-}_{W}f$.
	\end{remark}

\begin{remark}
	It is important to note that the proof of Theorem \ref{sobequal} corrects a mistake in a previous proof of a similar result \cite[Proposition 2.5]{wsimas}. Indeed, in that proof, Banach-Steinhaus theorem was incorrectly applied. Thus, its consequences are incorrect.
\end{remark}

	\begin{theorem}[Characterizarion of the Sobolev spaces in terms of Fourier coefficients]\label{thm:6}
		The following characterization of $H_{W,V}(\mathbb{T})$ is true 
		$$H_{W,V}(\mathbb{T})=\left\{f\in L^{2}_{V}(\mathbb{T}); f=\alpha_{0}+\sum_{i=1}^{\infty}\alpha_{i}\nu_{i}; \sum_{i=1}^{\infty}\lambda_{i}\alpha_{i}^2<\infty\right\}.$$
	\end{theorem}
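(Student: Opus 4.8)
The plan is to characterize $H_{W,V}(\mathbb{T})$ as the image of the spectral square-root-type condition, exploiting the fact (established in Theorems \ref{sobchar}, \ref{compactemb}, Definition \ref{defi5friedri}) that $\mathcal{A} = I - \Delta_{W,V}$ is a positive self-adjoint operator with compact resolvent, complete orthonormal eigensystem $(\nu_n)_{n\in\mathbb{N}}$ in $L^2_V(\mathbb{T})$, and eigenvalues $\gamma_n = 1 + \lambda_n$ with $0 = \lambda_0 \le \lambda_1 \le \cdots \to \infty$. The energetic space $H_{W,V}(\mathbb{T})$ associated to $\mathcal{A}$ is, by the general theory of energetic spaces (Zeidler, \cite[Section 5.3]{zeidler}), precisely the form domain $\mathcal{D}(\mathcal{A}^{1/2})$, which in spectral terms is $\{f = \sum_i \alpha_i \nu_i \in L^2_V(\mathbb{T}) : \sum_i \gamma_i \alpha_i^2 < \infty\}$, with energetic norm $\|f\|_{1,2}^2 = \sum_i \gamma_i \alpha_i^2$. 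Since $\gamma_i = 1 + \lambda_i$ and $\sum_i \alpha_i^2 = \|f\|_V^2 < \infty$ automatically for $f \in L^2_V(\mathbb{T})$, the condition $\sum_i \gamma_i \alpha_i^2 < \infty$ is equivalent to $\sum_{i=1}^\infty \lambda_i \alpha_i^2 < \infty$ (the $i=0$ term drops out since $\lambda_0 = 0$), which is exactly the claimed description.

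Concretely I would argue as follows. First, write $\alpha_0$ for the coefficient of the constant eigenfunction $\nu_0$ (normalized), consistent with the statement's notation $f = \alpha_0 + \sum_{i=1}^\infty \alpha_i \nu_i$. For the inclusion $\subseteq$: take $f \in H_{W,V}(\mathbb{T})$ with admissible sequence $(f_n) \subset \mathfrak{D}_{W,V}(\mathbb{T})$, so $f_n \to f$ in $L^2_V(\mathbb{T})$ and $(f_n)$ is Cauchy in $\|\cdot\|_{1,2}$. Using the integration-by-parts formula (Theorem \ref{thm:1}(b), and \eqref{e4}) together with $-\Delta_{W,V}\nu_i = \lambda_i \nu_i$, compute $\langle (I - D^+_V D^-_W) f_n, \nu_i\rangle_V = (1+\lambda_i)\langle f_n, \nu_i\rangle_V$, so that the $i$-th Fourier coefficient of $f_n$ in the energetic inner product is $(1+\lambda_i)$ times the usual one. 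By Parseval applied in the energetic space, $\|f_n\|_{1,2}^2 = \sum_i (1+\lambda_i)\langle f_n,\nu_i\rangle_V^2$; since this is a bounded Cauchy sequence and $\langle f_n,\nu_i\rangle_V \to \alpha_i$ for each $i$, Fatou's lemma gives $\sum_i (1+\lambda_i)\alpha_i^2 \le \liminf_n \|f_n\|_{1,2}^2 < \infty$, hence $\sum_{i=1}^\infty \lambda_i \alpha_i^2 < \infty$. For the inclusion $\supseteq$: given $f = \alpha_0 + \sum_{i=1}^\infty \alpha_i \nu_i$ with $\sum_i \lambda_i \alpha_i^2 < \infty$, set $f_N = \alpha_0 + \sum_{i=1}^N \alpha_i \nu_i$. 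Each $\nu_i \in H_{W,V}(\mathbb{T})$ by Theorem \ref{eigenreg}, and in fact $\nu_i \in \mathcal{D}_{W,V}(\mathbb{T}) \subseteq \mathfrak{D}_{W,V}(\mathbb{T})$-closure; since $H_{W,V}(\mathbb{T})$ is a complete normed space and $\|f_N - f_M\|_{1,2}^2 = \sum_{i=M+1}^N (1+\lambda_i)\alpha_i^2 \to 0$ as $M,N\to\infty$ (using $\sum \lambda_i\alpha_i^2 < \infty$ and $\sum \alpha_i^2 < \infty$), the sequence $(f_N)$ is Cauchy in $H_{W,V}(\mathbb{T})$ and converges there; its $L^2_V$-limit is $f$, so $f \in H_{W,V}(\mathbb{T})$.

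The one genuine subtlety — and the main obstacle — is justifying that the energetic space $H_{W,V}(\mathbb{T})$ built abstractly in Definition \ref{sobolev1} coincides with the spectral form domain $\mathcal{D}(\mathcal{A}^{1/2})$ and that the energetic norm is computed by the spectral formula $\|f\|_{1,2}^2 = \sum_i \gamma_i \alpha_i^2$ on \emph{all} of $H_{W,V}(\mathbb{T})$, not merely on $\mathfrak{D}_{W,V}(\mathbb{T})$. This is where the Friedrichs extension machinery (Definition \ref{defi5friedri}, \cite[Section 5.5]{zeidler}) and the completeness of the eigensystem enter: one checks that $(\nu_i)$ is also orthogonal in the energetic inner product, that $\mathrm{span}(\nu_i)$ is dense in $H_{W,V}(\mathbb{T})$ (which follows because any $f \in H_{W,V}(\mathbb{T})$ orthogonal in the energetic inner product to every $\nu_i$ would have all $L^2_V$-Fourier coefficients zero, hence $f = 0$), and that the energetic norm is continuous with respect to energetic convergence — all standard consequences of \cite[Section 5.3–5.5]{zeidler} once one knows $\mathcal{A}^{-1}$ is compact, which was recorded right after Definition \ref{defi5friedri}. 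I would state this identification as the first step, citing Zeidler, and then the rest is the routine Fourier bookkeeping above.
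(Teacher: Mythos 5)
Your proposal is correct and follows essentially the same route as the paper: both arguments reduce to showing that the eigenfunctions form a complete orthogonal system for the Dirichlet/energetic inner product (your step ``energetic orthogonality to every $\nu_i$ forces $f=0$'' is exactly the paper's Lemma~\ref{lemaqfaltou}, phrased there for the derivative space $\mathcal{W}=\{D_W^-f : f\in H_{W,V}(\mathbb{T})\}\subset L^2_{W,0}(\mathbb{T})$), followed by Parseval and a partial-sum Cauchy argument for the reverse inclusion. The only difference is cosmetic: you work with Parseval directly in $(H_{W,V}(\mathbb{T}),\|\cdot\|_{1,2})$ viewed as the form domain $\mathcal{D}(\mathcal{A}^{1/2})$, while the paper works in the closed subspace of $W$-derivatives with the orthonormal system $\{\lambda_i^{-1/2}D_W^-\nu_i\}$.
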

	
To prove the above theorem, we need to prove some auxiliary results. First, define $$\mathcal{W}=\left\{ D^{-}_{W}f; f\in H_{W,V}(\mathbb{T}) \right\}\subset L^{2}_{W,0}(\mathbb{T}).$$ 
	
	\begin{lemma}
		$\mathcal{W}$ is a closed subspace of $L^{2}_{W,0}(\mathbb{T})$
	\end{lemma}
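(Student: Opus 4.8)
The plan is to show that $\mathcal{W}$ is closed by exploiting the isometry-type relationship between the energetic norm on $H_{W,V}(\mathbb{T})$ and the $L^2_W$-norm of the $W$-left derivative, combined with the compactness result of Theorem \ref{compactemb}. First I would take a sequence $F_n \in \mathcal{W}$ with $F_n \to F$ in $L^2_W(\mathbb{T})$; by definition each $F_n = D^-_W f_n$ for some $f_n \in H_{W,V}(\mathbb{T})$, and since $F_n \in L^2_{W,0}(\mathbb{T})$ (which is closed) we get $F \in L^2_{W,0}(\mathbb{T})$ as well. The natural candidate for a preimage is $f(x) := \int_{(0,x]} F(s)\,dW(s)$, which by Theorem \ref{sobchar} lies in $H_{W,V}(\mathbb{T})$ with $D^-_W f = F$, so $F \in \mathcal{W}$.

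The subtle point is that one must be careful about the additive constant: the $f_n$ coming from $F_n$ are only determined up to a constant, so to keep things clean I would \emph{normalize} by choosing $f_n(x) = \int_{(0,x]} F_n(s)\,dW(s)$ (so $f_n(0)=0$), which is a legitimate choice since, by Theorem \ref{sobchar}, this function is in $H_{W,V}(\mathbb{T})$ precisely because $F_n \in L^2_{W,0}(\mathbb{T})$. Then by H\"older's inequality $\|f_n - f\|_V \le \sqrt{W(1)V(1)}\,\|F_n - F\|_W \to 0$, so $f_n \to f$ in $L^2_V(\mathbb{T})$, and simultaneously $D^-_W f_n = F_n \to F$ in $L^2_W(\mathbb{T})$. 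This says exactly that $(f_n)$ converges to $f$ in the energetic norm $\|\cdot\|_{1,2}$, hence $f \in H_{W,V}(\mathbb{T})$ (the energetic space is complete), and $D^-_W f = F$ by uniqueness of the $W$-left derivative (Theorem \ref{sobchar}). Therefore $F = D^-_W f \in \mathcal{W}$, proving $\mathcal{W}$ is closed.

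Alternatively, and perhaps more in the spirit of the paper, one can simply observe directly from Theorem \ref{sobchar} that the linear map $T : L^2_{W,0}(\mathbb{T}) \to H_{W,V}(\mathbb{T})$ given by $T(F)(x) = \int_{(0,x]} F\,dW$ is well-defined, and that $D^-_W : H_{W,V}(\mathbb{T}) \to L^2_{W,0}(\mathbb{T})$ satisfies $D^-_W \circ T = \mathrm{id}$. Moreover $D^-_W$ is a bounded operator, since $\|D^-_W f\|_W^2 \le \|f\|_{1,2}^2$ directly from the definition of the energetic norm. It follows that $\mathcal{W} = \mathrm{ran}(D^-_W)$ equals $L^2_{W,0}(\mathbb{T})$ itself: indeed, by Corollary \ref{cr:1} the set $\{D^-_W f : f \in \mathfrak{D}_{W,V}\}$ is already dense in $L^2_{W,0}(\mathbb{T})$, and $\mathfrak{D}_{W,V} \subset H_{W,V}(\mathbb{T})$, while conversely $\mathcal{W} \subseteq L^2_{W,0}(\mathbb{T})$ by definition; combined with the fact that $T$ provides an explicit preimage for every $F \in L^2_{W,0}(\mathbb{T})$, we conclude $\mathcal{W} = L^2_{W,0}(\mathbb{T})$, which is trivially closed. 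The main thing to be careful about is verifying that $T(F)$ genuinely has $F$ (and not $F$ minus its mean, or some other representative) as its $W$-left derivative, which is precisely the content of the uniqueness clause in Theorem \ref{sobchar}; once that is in hand the closedness is immediate.
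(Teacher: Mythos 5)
Your proof is correct, and in fact your second (``alternative'') argument is both more direct than the paper's and proves something strictly stronger. The paper's proof normalizes the preimages $f_n$ to have $V$-mean zero, invokes the Poincar\'e--Friedrichs inequality \eqref{i1} to deduce that $(f_n)$ is Cauchy in $L^2_V(\mathbb{T})$, hence Cauchy in the energetic norm, and then uses completeness of $H_{W,V}(\mathbb{T})$ together with uniqueness of limits. Your first paragraph is essentially this same argument with a different normalization ($f_n(0)=0$ rather than mean zero) and H\"older's inequality in place of \eqref{i1}; both normalizations work equally well. Your second argument, however, short-circuits the sequence entirely: the converse direction of Theorem \ref{sobchar} already states that for \emph{every} $F\in L^2_{W,0}(\mathbb{T})$ the function $f(x)=\int_{(0,x]}F\,dW$ belongs to $H_{W,V}(\mathbb{T})$ with $D_W^-f=F$ (the uniqueness clause guaranteeing that $F$ itself, and not some other representative, is the derivative), so $\mathcal{W}=L^2_{W,0}(\mathbb{T})$ and closedness is immediate. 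This observation buys a cleaner statement and makes the surjectivity of $D_W^-$ onto $L^2_{W,0}(\mathbb{T})$ explicit, which is implicitly used later anyway (e.g., in Lemma \ref{lemaqfaltou}, completeness of $\{\lambda_i^{-1/2}D_W^-\nu_i\}$ in $\mathcal{W}$ is only meaningful because $\mathcal{W}$ is all of $L^2_{W,0}(\mathbb{T})$). The only caveat is that your argument leans entirely on the uniqueness clause of Theorem \ref{sobchar}, so its validity is exactly as strong as that clause; since the paper asserts it, your proof is complete.
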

	
	\begin{proof}Let us assume, without loss of generality, that we have $D^{-}_{W}f$ with $f\in L^{2}_{V,0}(\mathbb{T})$, otherwise if $f$ belongs to $L^{2}_{V}(\mathbb{T})$, we replace it by $f-\int f dV$, and its $W$-left derivative does not change. Now, let $D^{-}_{W}f_{n}$ be a sequence in $\mathcal{W}$  converging to $g$ in $L^{2}_{W}(\mathbb{T})$. In particular, $D^{-}_{W}f_{n}$ is Cauchy in $L^{2}_{W}(\mathbb{T})$. Thus, by (\ref{i1}), $f_{n}$ is cauchy in $L^{2}_{V}(\mathbb{T})$ and this implies that $f_{n}$ is cauchy in $H_{W,V}(\mathbb{T})$ in the energetic norm. Since $H_{W,V}(\mathbb{T})$ is complete in the energetic norm, there exists $f\in H_{W,V}(\mathbb{T})$ such that $f_{n}\to f$ in energetic norm. That is, $f_{n}\to f $ in  $L^{2}_{V}(\mathbb{T})$, and $D^{-}_{W}f_{n}\to D^{-}_{W}f$ in $L^{2}_{W}(\mathbb{T})$. By the uniqueness of limit, we obtain $g=D^{-}_{W}f$.
	\end{proof}
	
The above lemma tells us that $\mathcal{W}$ is a Hilbert space with respect to the  $L^{2}_{W}(\mathbb{T})$ norm. Let us prove another auxiliary result.
	\begin{lemma}\label{lemaqfaltou}
		The set $\left\{\frac{1}{\sqrt{\lambda_{i}}}D^{-}_{W}\nu_{i}\right\}_{i=1}^{\infty}$, where $\nu_{k}$ is given satisfying (\ref{autovt}), is a complete orthonormal set in $\mathcal{W}$.
		
	\end{lemma}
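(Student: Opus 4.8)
The plan is to verify orthonormality by a direct computation built on the integration-by-parts formula of Proposition \ref{intbypartsprop}, and then to establish completeness by showing that the orthogonal complement of the system inside $\mathcal{W}$ is trivial. Recall from the preceding lemma that $\mathcal{W}$ is a closed subspace of $L^2_{W,0}(\mathbb{T})$, hence a Hilbert space, so completeness of an orthonormal set is equivalent to its orthogonal complement being $\{0\}$.

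First I would check orthonormality. Fix $i,j\ge 1$; since each $\nu_k$ is an eigenvector of $-\Delta_{W,V}$ we have $\nu_i\in\mathcal{D}_{W,V}(\mathbb{T})$ and $\nu_j\in H_{W,V}(\mathbb{T})$, so Proposition \ref{intbypartsprop} applies and yields
$$\int_{\mathbb{T}}D^{-}_{W}\nu_i\,D^{-}_{W}\nu_j\,dW=\langle-\Delta_{W,V}\nu_i,\nu_j\rangle_V=\lambda_i\langle\nu_i,\nu_j\rangle_V=\lambda_i\delta_{ij},$$
using \eqref{autovt} and orthonormality of $\{\nu_k\}$ in $L^2_V(\mathbb{T})$. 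Here $\lambda_i>0$ for $i\ge 1$ because $0$ is a simple eigenvalue of $\Delta_{W,V}$ whose eigenspace consists of the constants (as recorded in the proof of Proposition \ref{densenesscinfinity}). Dividing by $\sqrt{\lambda_i\lambda_j}$ gives $\langle\lambda_i^{-1/2}D^{-}_{W}\nu_i,\lambda_j^{-1/2}D^{-}_{W}\nu_j\rangle_W=\delta_{ij}$.

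Next I would prove completeness. Let $h\in\mathcal{W}$ be orthogonal (in $L^2_W(\mathbb{T})$) to every $D^{-}_{W}\nu_i$, $i\ge 1$, and write $h=D^{-}_{W}g$ for some $g\in H_{W,V}(\mathbb{T})$. Applying Proposition \ref{intbypartsprop} with $f=\nu_i$ gives, for each $i\ge 1$,
$$0=\langle h,D^{-}_{W}\nu_i\rangle_W=\int_{\mathbb{T}}D^{-}_{W}g\,D^{-}_{W}\nu_i\,dW=\langle-\Delta_{W,V}\nu_i,g\rangle_V=\lambda_i\langle\nu_i,g\rangle_V,$$
so $\langle\nu_i,g\rangle_V=0$ for all $i\ge 1$. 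Since $\{\nu_k\}_{k\ge 0}$ is a complete orthonormal system of $L^2_V(\mathbb{T})$, this forces $g=\langle g,\nu_0\rangle_V\nu_0$, i.e., $g$ is a constant function; by Theorem \ref{sobchar} and the uniqueness of the $W$-left derivative, $D^{-}_{W}g=0$, that is, $h=0$. Hence the orthonormal system has trivial orthogonal complement in $\mathcal{W}$ and is therefore complete. The only points requiring care are that the integration-by-parts formula is genuinely applicable (which holds since eigenfunctions lie in $\mathcal{D}_{W,V}(\mathbb{T})\subset H_{W,V}(\mathbb{T})$) and that the kernel of $D^{-}_{W}$ on $H_{W,V}(\mathbb{T})$ is exactly the constants; both are already available, so no serious obstacle arises — the statement is essentially a repackaging of Proposition \ref{intbypartsprop} and the spectral decomposition of $\Delta_{W,V}$ on $L^2_V(\mathbb{T})$.
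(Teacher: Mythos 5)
Your proof is correct and follows essentially the same route as the paper: orthonormality via the integration-by-parts identity $\int_{\mathbb{T}}D^{-}_{W}\nu_i D^{-}_{W}\nu_j\,dW=\lambda_i\delta_{ij}$, and completeness by showing that any $D^{-}_{W}g$ orthogonal to all $D^{-}_{W}\nu_i$ forces $\langle g,\nu_i\rangle_V=0$ for $i\ge 1$, hence $g$ constant and $D^{-}_{W}g=0$. Your version is slightly more explicit about why $\lambda_i>0$ and why a constant $g$ has vanishing $W$-derivative, but there is no substantive difference.
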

	
	\begin{proof} First, note that
		
		\begin{align*}
			\left \langle \dfrac{1}{\sqrt{\lambda_{i}}}D^{-}_{W}\nu_{i},\dfrac{1}{\sqrt{\lambda_{i}}}D^{-}_{W}\nu_{i} \right\rangle_{W} &= \dfrac{1}{\sqrt{\lambda_{i}\lambda_{j}}}\int_{\mathbb{T}}D^{-}_{W}\nu_{i}D^{-}_{W}\nu_{j}dW\\
        &= \dfrac{1}{\sqrt{\lambda_{i}\lambda_{j}}}\int_{\mathbb{T}}\nu_{i}D^{+}_{V}D^{-}_{W}\nu_{j}dV = \dfrac{\sqrt{\lambda_{i}}}{\sqrt{\lambda_{j}}}\delta_{i,j},
		\end{align*}
		where $\delta_{i,j}$ stands for the Kronecker's delta. Let us now prove the completeness of the system. If
		$D^{-}_{W}g\perp D^{-}_{W}\nu_{i}$ for all $i=1,2,3,\dots$, then
		$$0=\int_{\mathbb{T}}D^{-}_{W}gD^{-}_{W}\nu_{j}dW = \int_{\mathbb{T}}gD^{+}_{V}D^{-}_{W}\nu_{j}dV \Rightarrow \int_{\mathbb{T}}g\nu_{i}dV=0.$$
		This means that $g$ is constant and, thus, $D^{-}_{W}g=0.$
	\end{proof}
	
	The above lemma helps us in relating the Fourier coefficients of functions in $H_{W,V}(\mathbb{T})$ with the eigenvalues of $\Delta_{W,V}$. Indeed, let $f\in H_{W,V}(\mathbb{T})$. In particular $f\in L^{2}_{V}(\mathbb{T})$. This implies that there are $\{\alpha_{i}\}_{i=0}^{\infty}$ such that
	$$f=\alpha_{0}+\sum_{i=1}^{\infty}\alpha_{i}\nu_{i},\quad \hbox{where}\quad \alpha_{0}=\int_{\mathbb{T}}fdV \quad\hbox{and}\quad\alpha_{i}=\int_{\mathbb{T}}f\nu_{i}dV.$$ 
	We also have $D^{-}_{W}f=\sum_{i=1}^{\infty}\alpha_{i}D^{-}_{W}\nu_{i}.$
	In fact, by the previous lemma
	$D^{-}_{W}f=\sum_{i=1}^{\infty}\beta_{i}\dfrac{D^{-}_{W}\nu_{i}}{\sqrt{\lambda_{i}}},$
	where $\beta_{i}=\dfrac{1}{\sqrt{\lambda_{i}}}\int_{\mathbb{T}}D^{-}_{W}fD^{-}_{W}\nu_{i}dW=\sqrt{\lambda_{i}}\int_{\mathbb{T}}f\nu_{i} dV= \sqrt{\lambda_{i}}\alpha_{i}.$ We are now in a position to prove Theorem \ref{thm:6}.
	\begin{proof} [Proof of Theorem \ref{thm:6} ] Let $f\in H_{W,V}(\mathbb{T})$. By the above remarks, we have that
		$$D^{-}_{W}f = \sum_{i=1}^{\infty} \alpha_{i}D^{-}_{W}\nu_{i}=\sum_{i=1}^{\infty}\sqrt{\lambda_{i}}\alpha_{i}\dfrac{D^{-}_{W}\nu_{i}}{\sqrt{\lambda_{i}}}.$$
		Since $\left\{\frac{1}{\sqrt{\lambda_{i}}}D^{-}_{W}\nu_{i}\right\}_{i=1}^{\infty}$ is a complete orthonormal set we have
		$$\|D^{-}_{W}f\|_{W}^{2}=\sum_{i=1}^{\infty}\lambda_{i}\alpha_{i}^{2}<\infty.$$
		To prove the reverse inclusion, observe that if $f=\alpha_{0}+\sum_{i=1}^{\infty}\alpha_{i}\nu_{i}$ is such that ${\sum_{i=1}^{\infty}\lambda_{i}\alpha_{i}^{2}<\infty,}$ then the sequence $f_{n}=\alpha_{0}+\sum_{i=1}^{n}\alpha_{i}\nu_{i}$ is Cauchy in $H_{W,V}(\mathbb{T})$. This implies that it converges to $f$ in $L^{2}_{V}(\mathbb{T})$. That is, $f_{n}$ is admissible for $f$, because $f_{n}\in \mathfrak{D}_{W,V}$, and by definition
		$$D^{-}_{W}f=\lim_{n\to\infty}D^{-}_{W}f_{n}=\sum_{i=1}^{\infty}\alpha_{i}D^{-}_{W}\nu_{i}.$$
	\end{proof}

	\begin{corollary}\label{coro1}
		We have the following results regarding approximation of functions in the $H_{W,V}(\mathbb{T})$ by smooth functions:
		$$\overline{ {C^{\infty}_{W,V}}(\mathbb{T})}^{\|.\|_{1,2}}=H_{W,V}(\mathbb{T}).$$ 
		Moreover, to verity that $\xi$ the $W$-left weak derivative of $h \in L^{2}_{V}(\mathbb{T})$, we only need to verify that
		$$\int_{\mathbb{T}}\left(h-\int_{\mathbb{T}}hdV\right)D^{+}_{V}g\;dV=\int_{\mathbb{T}}g\xi\;dW$$
		for all $g\in C^{\infty}_{V,W,0}(\mathbb{T}).$
		
	\end{corollary}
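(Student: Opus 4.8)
The plan is to prove the two assertions of the statement separately. For the density claim I will combine the Fourier description of $H_{W,V}(\mathbb{T})$ from Theorem \ref{thm:6} with the fact that eigenfunctions are smooth (Theorem \ref{eigenreg}); for the ``reduction'' claim I will use the direct-sum decomposition $C^{\infty}_{V,W}(\mathbb{T})=\langle 1\rangle\oplus C^{\infty}_{V,W,0}(\mathbb{T})$ together with the mean-zero conditions built into the spaces, being careful about the swap between the $W,V$ and the $V,W$ structures dictated by Remark \ref{orderVW}.

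\emph{Density, first the inclusion $C^{\infty}_{W,V}(\mathbb{T})\subseteq H_{W,V}(\mathbb{T})$.} Any $g\in C^{\infty}_{W,V}(\mathbb{T})$ is a constant plus an element of $C^{\infty}_{W,V,0}(\mathbb{T})\subseteq C^{1}_{W,V,0}(\mathbb{T})$, hence $g$ is c\`adl\`ag, $D^{-}_{W}g$ is c\`agl\`ad with zero $W$-mean and, being c\`agl\`ad on the compact $\mathbb{T}$, bounded; thus $D^{-}_{W}g\in L^{2}_{W,0}(\mathbb{T})$. Together with the fundamental-theorem identity $g(x)=g(0)+\int_{(0,x]}D^{-}_{W}g\,dW$, Theorem \ref{sobchar} then yields $g\in H_{W,V}(\mathbb{T})$. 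Since $H_{W,V}(\mathbb{T})$ is complete for $\|\cdot\|_{1,2}$, this already gives $\overline{C^{\infty}_{W,V}(\mathbb{T})}^{\|\cdot\|_{1,2}}\subseteq H_{W,V}(\mathbb{T})$.

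\emph{Density, reverse inclusion.} Given $f=\alpha_{0}+\sum_{i\ge 1}\alpha_{i}\nu_{i}\in H_{W,V}(\mathbb{T})$ with $\sum_{i}\lambda_{i}\alpha_{i}^{2}<\infty$ (Theorem \ref{thm:6}), set $f_{n}:=\alpha_{0}+\sum_{i=1}^{n}\alpha_{i}\nu_{i}$. By Theorem \ref{eigenreg} each $\nu_{i}\in C^{\infty}_{W,V,0}(\mathbb{T})$ and constants lie in $C^{\infty}_{W,V}(\mathbb{T})$, so $f_{n}\in C^{\infty}_{W,V}(\mathbb{T})$. Using that $\{\nu_{i}\}_{i\ge 1}$ is orthonormal in $L^{2}_{V}(\mathbb{T})$ and that $\{\lambda_{i}^{-1/2}D^{-}_{W}\nu_{i}\}_{i\ge1}$ is a complete orthonormal system in $\mathcal{W}$ (Lemma \ref{lemaqfaltou}), one computes $\|f-f_{n}\|_{1,2}^{2}=\sum_{i>n}\alpha_{i}^{2}+\sum_{i>n}\lambda_{i}\alpha_{i}^{2}\to 0$, so $f\in\overline{C^{\infty}_{W,V}(\mathbb{T})}^{\|\cdot\|_{1,2}}$; this completes the first claim. (Alternatively, one may approximate $D^{-}_{W}f$ in $L^{2}_{W,0}(\mathbb{T})$ by $W$-derivatives of functions in $C^{\infty}_{W,V}(\mathbb{T})$ via Proposition \ref{densenesscinfinity} and integrate.)

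\emph{Reduction of test functions and main obstacle.} By the definition of the $W$-left weak derivative, $\xi\in L^{2}_{W,0}(\mathbb{T})$ is the $W$-left weak derivative of $h$ iff \eqref{defWd} holds for every $g\in C^{\infty}_{V,W}(\mathbb{T})$. Writing $g=c+g_{0}$ with $c\in\mathbb{R}$ and $g_{0}\in C^{\infty}_{V,W,0}(\mathbb{T})$, the operator $D^{+}_{V}$ annihilates $c$ and $\int_{\mathbb{T}}\xi\,dW=0$, so both sides of \eqref{defWd} depend on $g$ only through $g_{0}$; hence it suffices to test against $g_{0}\in C^{\infty}_{V,W,0}(\mathbb{T})$. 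For such $g_{0}$ — which lies in $C^{1}_{V,W,0}(\mathbb{T})$, so $\int_{\mathbb{T}}D^{+}_{V}g_{0}\,dV=0$ — subtracting the constant $\int_{\mathbb{T}}h\,dV$ from $h$ leaves $\int_{\mathbb{T}}h\,D^{+}_{V}g_{0}\,dV$ unchanged, which is exactly the reduced criterion in the statement. The only genuinely delicate point I anticipate is the fundamental-theorem step $g(x)=g(0)+\int_{(0,x]}D^{-}_{W}g\,dW$ used in the easy inclusion: this relies on the uniqueness of antiderivatives in Lemma \ref{lm:1} together with the observation that, because $D^{-}_{W}g$ is c\`agl\`ad, averaging it against $dW$ over shrinking left intervals recovers its value, i.e. $D^{-}_{W}\!\left(\int_{(0,\cdot]}D^{-}_{W}g\,dW\right)=D^{-}_{W}g$. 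The remaining work is bookkeeping, chiefly tracking which zero-mean space ($L^{2}_{W,0}$ versus $L^{2}_{V,0}$) and which one-sided regularity (c\`adl\`ag versus c\`agl\`ad) is in force.
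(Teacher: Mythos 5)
The paper states this corollary without proof, and your argument supplies exactly the intended one: the partial Fourier sums $\alpha_0+\sum_{i\le n}\alpha_i\nu_i$ lie in $C^{\infty}_{W,V}(\mathbb{T})$ by Theorem \ref{eigenreg}, converge in $\|\cdot\|_{1,2}$ by Theorem \ref{thm:6} and Lemma \ref{lemaqfaltou}, and the test-function reduction is the decomposition $g=c+g_0$ combined with the built-in mean-zero conditions. Two points are worth making explicit: the reduced criterion tested only against $g\in C^{\infty}_{V,W,0}(\mathbb{T})$ determines $\xi$ only up to an additive constant, so one must (as you implicitly do) restrict to candidates $\xi\in L^{2}_{W,0}(\mathbb{T})$; and your reduction actually yields the right-hand side $-\int_{\mathbb{T}}g\,\xi\,dW$, consistent with \eqref{defWd} and the integration-by-parts identity \eqref{e3}, so the sign in the corollary's display appears to be a typo in the paper that your write-up should flag rather than silently reproduce.
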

	
	We end this section with a characterization of the dual of $H_{W,V}(\mathbb{T})$:
	
	\begin{proposition}\label{dual_sobolev}
		Let $H^{-1}_{W,V}(\mathbb{T})$ be the dual of $H_{W,V}(\mathbb{T})$. We have that $f\in H^{-1}_{W,V}(\mathbb{T})$ if, and only if, there exist $f_0\in L^2_V(\mathbb{T})$ and $f_1\in L^2_W(\mathbb{T})$ such that for every $g\in H_{W,V}(\mathbb{T})$
		$$(f, g) = \int_{\mathbb{T}} f_0(\xi) g(\xi) dV(\xi) + \int_{\mathbb{T}} f_1(\xi) D_W^-g(\xi)dW(\xi).$$
	\end{proposition}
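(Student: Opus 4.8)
The plan is to realize $H_{W,V}(\mathbb{T})$ as a closed subspace of a product Hilbert space and then invoke Hahn--Banach together with the Riesz representation theorem. The key structural fact I would use first is that the energetic norm decomposes exactly as $\|g\|_{1,2}^2 = \|g\|_V^2 + \|D_W^- g\|_W^2$ for every $g \in H_{W,V}(\mathbb{T})$: on the dense subspace $\mathfrak{D}_{W,V}(\mathbb{T})$ this is immediate from Definition \ref{sobolev1} and the integration by parts identity \eqref{e4}, and it passes to all of $H_{W,V}(\mathbb{T})$ by density, with $D_W^- g$ well defined via Theorem \ref{sobchar}.

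The ``if'' direction is then a one-line estimate. Given $f_0 \in L^2_V(\mathbb{T})$ and $f_1 \in L^2_W(\mathbb{T})$, the Cauchy--Schwarz inequality gives, for every $g \in H_{W,V}(\mathbb{T})$,
\[
\Big| \int_{\mathbb{T}} f_0 g \, dV + \int_{\mathbb{T}} f_1 D_W^- g \, dW \Big| \le \|f_0\|_V \|g\|_V + \|f_1\|_W \|D_W^- g\|_W \le \big(\|f_0\|_V^2 + \|f_1\|_W^2\big)^{1/2} \|g\|_{1,2},
\]
so the right-hand side defines a bounded linear functional on $H_{W,V}(\mathbb{T})$, i.e. an element of $H^{-1}_{W,V}(\mathbb{T})$.

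For the converse, I would introduce the map $T : H_{W,V}(\mathbb{T}) \to \mathcal{H} := L^2_V(\mathbb{T}) \times L^2_W(\mathbb{T})$, $Tg = (g, D_W^- g)$, where $\mathcal{H}$ carries the Hilbert norm $\|(h_0,h_1)\|_{\mathcal{H}}^2 = \|h_0\|_V^2 + \|h_1\|_W^2$. By the norm identity above, $T$ is a linear isometry, and since $H_{W,V}(\mathbb{T})$ is complete in the energetic norm, its image $M := T(H_{W,V}(\mathbb{T}))$ is a closed subspace of $\mathcal{H}$. Given $f \in H^{-1}_{W,V}(\mathbb{T})$, the functional $f \circ T^{-1}$ is well defined and bounded on $M$; by Hahn--Banach it extends to a bounded linear functional $\Lambda$ on $\mathcal{H}$, and the Riesz representation theorem on the product Hilbert space $\mathcal{H}$ produces $f_0 \in L^2_V(\mathbb{T})$, $f_1 \in L^2_W(\mathbb{T})$ with $\Lambda(h_0,h_1) = \langle h_0, f_0\rangle_V + \langle h_1, f_1\rangle_W$. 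Restricting to $M$ and unwinding $T$ yields, for all $g \in H_{W,V}(\mathbb{T})$,
\[
(f, g) = \Lambda(Tg) = \langle g, f_0\rangle_V + \langle D_W^- g, f_1\rangle_W = \int_{\mathbb{T}} f_0 g \, dV + \int_{\mathbb{T}} f_1 D_W^- g \, dW,
\]
as claimed.

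I do not expect a genuine obstacle here; the only points deserving care are the verification that $T$ is an isometry (i.e. the exact decomposition of $\|\cdot\|_{1,2}$, which rests on \eqref{e4} and Theorem \ref{sobchar}), and the observation that the representing pair $(f_0,f_1)$ is not unique — in particular one may always arrange $f_1 \in L^2_{W,0}(\mathbb{T})$, since $D_W^- g$ has vanishing $W$-mean, so only the projection of $f_1$ onto $L^2_{W,0}(\mathbb{T})$ is seen by the pairing.
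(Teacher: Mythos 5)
Your proof is correct, but it takes a different route from the paper's. The paper simply observes that $H_{W,V}(\mathbb{T})$ is itself a Hilbert space under the energetic inner product $\langle u,g\rangle_{1,2}=\langle u,g\rangle_V+\langle D_W^-u,D_W^-g\rangle_W$ and applies the Riesz representation theorem directly there: this produces a single $f_0\in H_{W,V}(\mathbb{T})$ with $(f,g)=\langle f_0,g\rangle_V+\langle D_W^-f_0,D_W^-g\rangle_W$, and one then sets $f_1:=D_W^-f_0\in L^2_W(\mathbb{T})$; the converse is H\"older's inequality, exactly as in your ``if'' direction. Your argument instead isometrically embeds $H_{W,V}(\mathbb{T})$ into the product $L^2_V(\mathbb{T})\times L^2_W(\mathbb{T})$ via $Tg=(g,D_W^-g)$ and uses Hahn--Banach (or, equivalently in a Hilbert setting, orthogonal projection onto the closed image) followed by Riesz on the product space — the classical Evans-style proof of the $H^{-1}$ characterization. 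Both rest on the same structural fact, namely the exact decomposition $\|g\|_{1,2}^2=\|g\|_V^2+\|D_W^-g\|_W^2$, which you correctly trace back to \eqref{e4} and Theorem \ref{sobchar}. The paper's route is shorter and yields a canonical representing pair with $f_1=D_W^-f_0$ for $f_0\in H_{W,V}(\mathbb{T})$; yours makes the non-uniqueness of $(f_0,f_1)$ transparent and shows one may normalize $f_1\in L^2_{W,0}(\mathbb{T})$, which is a worthwhile additional observation since $D_W^-g$ always has vanishing $W$-mean.
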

	\begin{proof}
	Since $H_{W,V}(\mathbb{T})$ is a Hilbert Space, we can use Riesz's representation theorem on $f\in H^{-1}_{W,V}(\mathbb{T})$. This means that there is $f_{0}\in H_{W,V}(\mathbb{T})$ such that for every $g\in H_{W,V}(\mathbb{T})$
		$$(f, g) = \int_{\mathbb{T}} f_0(\xi) g(\xi) dV(\xi) + \int_{\mathbb{T}} D^{-}_{W}f_{0}(\xi) D_W^-g(\xi)dW(\xi).$$
		Clearly $D^{-}_{W}f_{0}\in L_{W}^{2}(\mathbb{T}).$ The converse follows immediately from H\"older's inequality.
	\end{proof}

	\section{One-sided second-order elliptic operators}\label{sect6}

Motivated by (\ref{apl:2}) and (\ref{apl:1}), and following \cite{pouso, franco, wsimas, feller}, in this section we study the one-sided second-order elliptic operator 
\begin{equation}\label{defdif}
    L_{W,V}(u):=-D^{+}_{V}AD_{W}^{-}u+\kappa^2u,
\end{equation}
where \( A, \kappa :\mathbb{T}\to\mathbb{R} \) satisfy suitable conditions. The jumps in \( V \) and \( W \), which define \( L_{W,V} \), relate to models incorporating impulses, reflections, or momentum shifts induced by the environment. Consequently, solutions of equations involving \( L_{W,V} \) (e.g., (\ref{elp:1})) inherit these jump properties as they are governed by the discontinuities in \( W \) and \( V \).
    
In what follows, let $A:\mathbb{T} \to \mathbb{R}$ be a positive bounded function, that is, there exists $K\geq 0$ such that for every $x\in\mathbb{T}$, $0< A(x)\leq K$ and bounded away from zero, that is, there exists $A_0>0$ such that for every $x\in \mathbb{T}$, $A(x)\geq A_0$. Let, $\kappa: \mathbb{T}\to \mathbb{R}$ be a bounded function. Sometimes we will suppose that $\kappa$ is bounded away from zero, meaning that there exists some constant $\kappa_0>0$ such that for all $x$, $\kappa(x)\geq\kappa_0$. Finally, consider $B_{W,V}:H_{W,V}(\mathbb{T})\times H_{W,V}(\mathbb{T}) \to \mathbb{R}$, a bilinear and symmetric map, given by
	\begin{equation}\label{bl:1}
		B_{W,V}[u,v]=\int_{\mathbb{T}}AD^{-}_{W}uD^{-}_{W}vdW+\int_{\mathbb{T}}\kappa^{2}uvdV.
	\end{equation}
	
The rest of this section extends classical results for Sobolev spaces (see, for instance \cite{evans}). We start by defining weak solutions in terms of the bilinear forms $B_{W,V}$:
	
	\begin{definition} Let $f\in L^{2}_{V}(\mathbb{T})$. We say that $u\in H_{W,V}(\mathbb{T})$ is a weak solution of the problem
		\begin{equation}\label{elp:1}
			L_{W,V}u=f\,\, \mbox{in}\,\, \mathbb{T}.
		\end{equation}
		if
		$$B[u.v]=(f,v)_V$$
		for all $v\in H_{W,V}(\mathbb{T}).$
	\end{definition}

As is standard in linear non-fractional elliptic partial differential equations, we will apply Lax-Milgram's theorem as a tool to find weak solution for the problem (\ref{elp:1}). Therefore, we first need to establish some energy estimates.
	\begin{proposition}\label{energyest} If $B_{W,V}$ is defined as above, there are $\alpha>0$ and $\beta>0$ such that for all $u,v\in H_{W,V}(\mathbb{T})$
		$$
		\left|B_{W,V}[u,v]\right|\le \alpha \|u\|_{W,V}\|v\|_{W,V}$$
		and for $u\in H_{W,V,0}(\mathbb{T}):= L^{2}_{V,0}(\mathbb{T})\cap H_{W,V}(\mathbb{T})$, we have
		\begin{equation}\label{eng:2}
			B_{W,V}[u,u]\geq \beta\|u\|^{2}_{W,V}.
		\end{equation}

	\end{proposition}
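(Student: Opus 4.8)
The plan is to establish the two estimates separately, using the structure of $B_{W,V}$ as a sum of a $W$-term and a $V$-term, together with the boundedness hypotheses on $A$ and $\kappa$ and the Poincaré--Friedrichs inequality (Theorem \ref{thm:1}, item d).

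For the continuity bound, I would simply estimate term-by-term. Since $0 < A(x) \le K$ and $\kappa$ is bounded, say $|\kappa(x)| \le K'$, the Cauchy--Schwarz inequality in $L^2_W(\mathbb{T})$ and in $L^2_V(\mathbb{T})$ gives
$$\left| \int_{\mathbb{T}} A\, D^-_W u\, D^-_W v\, dW \right| \le K \|D^-_W u\|_W \|D^-_W v\|_W, \qquad \left| \int_{\mathbb{T}} \kappa^2 u v\, dV \right| \le (K')^2 \|u\|_V \|v\|_V.$$
Adding these and recalling that $\|u\|_{W,V}^2 = \|u\|_V^2 + \|D^-_W u\|_W^2$ (so that each of $\|u\|_V$, $\|D^-_W u\|_W$ is dominated by $\|u\|_{W,V}$), one obtains $|B_{W,V}[u,v]| \le \alpha \|u\|_{W,V}\|v\|_{W,V}$ with $\alpha = \max\{K, (K')^2\}$ (or $K + (K')^2$, depending on how carefully one bundles the cross terms); this step is entirely routine.

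For the coercivity bound \eqref{eng:2}, the point is that we are on $H_{W,V,0}(\mathbb{T})$, i.e. $u$ has zero $V$-mean, so $u_{\mathbb{T}} = 0$ and the Poincaré--Friedrichs inequality \eqref{i1} reduces to $\|u\|_V^2 \le W(1)V(1)\|D^-_W u\|_W^2$. Using $A \ge A_0 > 0$ and $\kappa^2 \ge 0$ we get $B_{W,V}[u,u] \ge A_0 \|D^-_W u\|_W^2$. Now write $A_0 \|D^-_W u\|_W^2 = \tfrac{A_0}{2}\|D^-_W u\|_W^2 + \tfrac{A_0}{2}\|D^-_W u\|_W^2$ and bound the second half below using Poincaré--Friedrichs: $\tfrac{A_0}{2}\|D^-_W u\|_W^2 \ge \tfrac{A_0}{2 W(1)V(1)}\|u\|_V^2$. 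Hence
$$B_{W,V}[u,u] \ge \frac{A_0}{2}\|D^-_W u\|_W^2 + \frac{A_0}{2W(1)V(1)}\|u\|_V^2 \ge \beta\bigl(\|D^-_W u\|_W^2 + \|u\|_V^2\bigr) = \beta \|u\|_{W,V}^2,$$
with $\beta = \tfrac{A_0}{2}\min\{1, (W(1)V(1))^{-1}\}$. (If instead one assumes $\kappa$ bounded away from zero, coercivity on all of $H_{W,V}(\mathbb{T})$ follows even more directly, without Poincaré--Friedrichs, from $B_{W,V}[u,u] \ge A_0\|D^-_W u\|_W^2 + \kappa_0^2\|u\|_V^2$; I would mention this variant.)

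I do not expect a genuine obstacle here — this is the standard Lax--Milgram setup transplanted to the $W$-$V$ setting. The only thing requiring a modicum of care is making sure the zero-$V$-mean hypothesis is invoked correctly so that the $f_{\mathbb{T}}^2$ term in \eqref{i1} drops, and keeping the two measures $dW$ and $dV$ straight in the two integrals; both are handled by the machinery already in place (Theorem \ref{thm:1}, Theorem \ref{sobchar}, and the identification $\partial^-_W f = D^-_W f$).
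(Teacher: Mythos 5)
Your proof is correct and follows essentially the same route as the paper's: Cauchy--Schwarz term by term with the bounds on $A$ and $\kappa$ for continuity, and the Poincar\'e--Friedrichs inequality \eqref{i1} with the zero-$V$-mean condition (plus the same halving trick yielding $\beta = \tfrac{A_0}{2}\min\{1,(W(1)V(1))^{-1}\}$) for coercivity. Your closing remark about the case $\kappa \ge \kappa_0 > 0$ is likewise exactly what the paper records in Remark \ref{eng:1}.
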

	\begin{proof}
		By (\ref{bl:1}), the assumptions on $A$ and $\kappa$, the triangle inequality and H\"older's inequality, we have that
		$$|B_{W,V}[u,v]|\le L_{0}\|D^{-}_{W}u\|_{W}\|D^{-}_{W}v\|_{W}+L_{1}^2\|u\|_{V}\|v\|_{V},$$
		where $L_{0}:=\sup_{\mathbb{T}}|A|$ and $L_{1}:=\sup_{\mathbb{T}}|\kappa|$. Now, by using $\|D^{-}_{W}u\|_{W}\le \|u\|_{W,V}$ and $\|u\|_{V}\le \|u\|_{W,V}$, we have
		$$B_{W,V}[u,v]\le (L_{0}+L_{1}^2)\|u\|_{W,V}\|v\|_{W,V}.$$
		setting $\alpha=L_{0}+L_{1}^2$, the first part of the statement is proved. For the second assertion note that by (\ref{i1})
		\begin{equation}
			\dfrac{A_{0}}{W(1)V(1)}\|u\|_{V}^{2}\le A_{0}\|D^{-}_{W}u\|^{2}_{W}\le B_{W,V}[u,u].
		\end{equation}
		Therefore,
		\begin{equation}
			\dfrac{A_{0}}{2}\min\left\{\dfrac{1}{W(1)V(1)},1\right\}\|u\|_{W,V}^2\le B_{W,V}[u,u].
		\end{equation}
	\end{proof}
	
	\begin{remark}\label{eng:1}
		Observe that if we consider the new operator $$L_{W,V,\lambda}u:=L_{W,V}u+\lambda u,$$
		then the energy estimates still hold for  
		$$B_{W,V,\lambda}[u,v]:=B_{W,V}[u,v]+\lambda(u,v)_{V}.$$
		In this case, the second assertion is true for all $u\in H_{W,V}(\mathbb{T})$ and $\lambda>0$. Moreover $\alpha$ and $\beta$ depends on $\lambda.$ Finally, if in addition $\kappa$ is bounded away from zero we get (\ref{eng:2}) for $\lambda>-\kappa_{0}$.
	\end{remark}

	We can now use standard arguments to obtain existance and uniqueness of solutions to \eqref{elp:1}.
	
	\begin{proposition}\label{prop:40}
		Given $f\in L^2_{V}(\mathbb{T}),$ there exists a unique $u\in H_{W,V,0}(\mathbb{T})$ that is a weak solution of 
		\begin{equation}\label{slc:1}
			L_{W,V}u=f\,\,\mbox{in}\,\,\mathbb{T}.
		\end{equation}
		Moreover, by the Remark \ref{eng:1}, for each $\lambda>0$, there is a unique $u\in H_{W,V}(\mathbb{T})$ such that
		\begin{equation}\label{atv:1}
			L_{W,V}u+\lambda u = f,\,\,\mbox{in}\,\,\mathbb{T}.
		\end{equation}
		If, additionally, $\kappa$ is bounded away from zero, then there exists $u\in H_{W,V}(\mathbb{T})$ that is a weak solution of (\ref{slc:1}) and (\ref{atv:1}) can be weakly solved for $\lambda>-\kappa_{0}.$ Furthermore, the solutions of (\ref{slc:1}) and (\ref{atv:1}) satisfies the following bounds 
		\begin{equation}\label{ine:10}
			\|u\|_{W,V}\le C\|f\|_{V}    
		\end{equation}
		for some constant $C>0$ independent of $f$ and
		$$\|u\|_{V}\le \lambda^{-1}\|f\|_{V}$$
		for $\lambda>0.$ For $\kappa$ bounded away from zero we have that 
		\begin{equation}\label{atv:2}
			\|u\|_{V}\le (\kappa_{0}+\lambda)^{-1}\|f\|_{V}.
		\end{equation}
		for $\lambda>-\kappa_{0}.$
	\end{proposition}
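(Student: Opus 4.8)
The plan is to derive everything from the Lax--Milgram theorem, using the energy estimates of Proposition \ref{energyest} together with Remark \ref{eng:1}. There is no deep difficulty here; the only point requiring care is to select, in each of the regimes of the statement, the Hilbert space on which the relevant bilinear form is coercive, and to make sure the test-function class used by Lax--Milgram matches the one in the definition of weak solution.

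First I would handle (\ref{slc:1}) when $\kappa$ is merely bounded (and possibly vanishing). Since $H_{W,V,0}(\mathbb{T})=L^2_{V,0}(\mathbb{T})\cap H_{W,V}(\mathbb{T})$ is a closed subspace of the Hilbert space $(H_{W,V}(\mathbb{T}),\|\cdot\|_{W,V})$, it is itself a Hilbert space. By Proposition \ref{energyest}, $B_{W,V}$ is bounded on $H_{W,V}(\mathbb{T})\times H_{W,V}(\mathbb{T})$, hence on $H_{W,V,0}(\mathbb{T})\times H_{W,V,0}(\mathbb{T})$, and coercive on $H_{W,V,0}(\mathbb{T})$ by (\ref{eng:2}). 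For fixed $f\in L^2_V(\mathbb{T})$ the functional $v\mapsto (f,v)_V$ is bounded on $H_{W,V,0}(\mathbb{T})$ since $|(f,v)_V|\le\|f\|_V\|v\|_V\le\|f\|_V\|v\|_{W,V}$. Lax--Milgram then yields a unique $u\in H_{W,V,0}(\mathbb{T})$ with $B_{W,V}[u,v]=(f,v)_V$ for all $v\in H_{W,V,0}(\mathbb{T})$, which is the asserted weak solution. When $\kappa$ is bounded away from zero the argument is identical, except that by Remark \ref{eng:1} the coercivity (\ref{eng:2}) now holds on the whole of $H_{W,V}(\mathbb{T})$, so Lax--Milgram applies directly on $H_{W,V}(\mathbb{T})$ and produces $u\in H_{W,V}(\mathbb{T})$.

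For the shifted equation (\ref{atv:1}) I would repeat the argument with $B_{W,V}$ replaced by $B_{W,V,\lambda}[u,v]=B_{W,V}[u,v]+\lambda(u,v)_V$. By Remark \ref{eng:1}, $B_{W,V,\lambda}$ is bounded and, for $\lambda>0$ (respectively $\lambda>-\kappa_0$ when $\kappa$ is bounded away from zero), coercive on all of $H_{W,V}(\mathbb{T})$, so Lax--Milgram gives a unique solution $u\in H_{W,V}(\mathbb{T})$. The a priori bounds then come from testing the weak formulation against $v=u$: coercivity gives $\beta\|u\|_{W,V}^2\le B_{W,V}[u,u]=(f,u)_V\le\|f\|_V\|u\|_V\le\|f\|_V\|u\|_{W,V}$, hence $\|u\|_{W,V}\le\beta^{-1}\|f\|_V=:C\|f\|_V$, which is (\ref{ine:10}), and the same computation with $B_{W,V,\lambda}$ handles (\ref{atv:1}). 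For the $L^2_V$ estimates, note $B_{W,V}[u,u]=\int_{\mathbb{T}}A(D^-_Wu)^2\,dW+\int_{\mathbb{T}}\kappa^2u^2\,dV\ge 0$, so from $B_{W,V,\lambda}[u,u]=(f,u)_V$ one gets $\lambda\|u\|_V^2\le\|f\|_V\|u\|_V$ for $\lambda>0$, i.e. $\|u\|_V\le\lambda^{-1}\|f\|_V$; discarding instead only the gradient term and using the lower bound on $\kappa$ gives the sharper estimate (\ref{atv:2}) for $\lambda>-\kappa_0$.

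I do not expect a real obstacle: the analytic content needed (boundedness and coercivity of $B_{W,V}$ from Proposition \ref{energyest}, completeness of $H_{W,V}(\mathbb{T})$, the Poincar\'e--Friedrichs inequality underlying coercivity on the mean-zero subspace) is already available, and the rest is a standard Lax--Milgram application. The only thing to watch is the bookkeeping: tracking which space ($H_{W,V}(\mathbb{T})$ or $H_{W,V,0}(\mathbb{T})$) carries coercivity in each regime --- $\lambda=0$ with $\kappa$ possibly vanishing, $\lambda>0$, and $\kappa$ bounded away from zero with $\lambda>-\kappa_0$ --- and ensuring the space on which Lax--Milgram is invoked agrees with the class of test functions in the definition of weak solution.
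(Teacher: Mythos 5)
Your proposal is correct and follows essentially the same route as the paper: existence and uniqueness via Lax--Milgram using the boundedness and coercivity from Proposition \ref{energyest} and Remark \ref{eng:1} (on $H_{W,V,0}(\mathbb{T})$ when $\kappa$ may vanish, on all of $H_{W,V}(\mathbb{T})$ otherwise), and the a priori bounds by testing the weak formulation against $v=u$ and discarding the appropriate nonnegative terms. The paper's proof is just a terser version of the same argument, so there is nothing further to add.
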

	\begin{proof}
		By Proposition \ref{energyest} and Remark \ref{eng:1}, the  existence and uniqueness follows from Lax-Milgram's theorem. For the bounds note that
		$$\beta\|u\|_{W,V}^2\le B_{W,V}[u,u] = (f,u)_{V} \le \|f\|_{V}\|u\|_{V} \le \|f\|_{V}\|u\|_{W,V}$$
		that is, $\|u\|_{W,V}\le C\|f\|_{V},$ for $C=\beta^{-1}.$
		Analogously,
		$$\lambda\|u\|^2_{V}\le A_{0}\|D^{-}_{W}u\|^2_{V}+\lambda\|u\|^2_{V}\le B_{W,V,\lambda}[u,u] = (f,u) \le \|f\|_{V}\|u\|_{V}.$$
		Finally, (\ref{atv:2}) can be obtained similarly.
	\end{proof}
	
	We also have the following consequence to Fredholm's alternative:

	\begin{proposition}
		Precisely one of the following assertions are true: 
		\begin{enumerate}
			\item For $f\in L^2_{V}(\mathbb{T})$, there exists a unique solution of 
			\begin{equation}\label{fa:1}
				L_{W,V}u=f,
			\end{equation} 
			or else
			\item There is a weak solution of $u\not\equiv 0$ of the homogeneous problem
			$$L_{W,V}u=0.$$
		\end{enumerate}
		Moreover, if 2. is true, we have $\dim \ker L_{W,V}<\infty$ and (\ref{fa:1}) has a weak solution if and only if 
		$$\int_{\mathbb{T}}vfdV=0$$
		for every $v\in\ker L_{W,V}.$
	\end{proposition}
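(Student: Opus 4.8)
The plan is to reduce \eqref{fa:1} to a compact-operator equation on $L^2_V(\mathbb{T})$ and then invoke the classical Fredholm alternative for compact self-adjoint operators. First I would fix any $\lambda>0$. By Remark~\ref{eng:1} and Proposition~\ref{prop:40}, the operator $L_{W,V,\lambda}=L_{W,V}+\lambda I$ is boundedly invertible: for each $h\in L^2_V(\mathbb{T})$ there is a unique $T_\lambda h\in H_{W,V}(\mathbb{T})$ with $B_{W,V,\lambda}[T_\lambda h,v]=(h,v)_V$ for all $v\in H_{W,V}(\mathbb{T})$, and $\|T_\lambda h\|_{W,V}\le\beta^{-1}\|h\|_V$ (with $\beta=\beta(\lambda)$). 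Composing $T_\lambda:L^2_V(\mathbb{T})\to H_{W,V}(\mathbb{T})$ with the embedding $H_{W,V}(\mathbb{T})\hookrightarrow L^2_V(\mathbb{T})$, which is compact by Theorem~\ref{compactemb}, yields a compact operator $K:=\lambda T_\lambda$ on $L^2_V(\mathbb{T})$. Moreover $K$ is self-adjoint with respect to $\langle\cdot,\cdot\rangle_V$: using the symmetry of $B_{W,V,\lambda}$ one checks that, for $h,g\in L^2_V(\mathbb{T})$, $\langle T_\lambda h,g\rangle_V=B_{W,V,\lambda}[T_\lambda h,T_\lambda g]=B_{W,V,\lambda}[T_\lambda g,T_\lambda h]=\langle h,T_\lambda g\rangle_V$.

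Next I would record the equivalence between weak solutions and the operator equation: $u\in H_{W,V}(\mathbb{T})$ is a weak solution of $L_{W,V}u=f$ if and only if $u\in L^2_V(\mathbb{T})$ and $(I-K)u=T_\lambda f$. Indeed $B_{W,V}[u,v]=(f,v)_V$ for all $v$ is equivalent to $B_{W,V,\lambda}[u,v]=(f+\lambda u,v)_V$ for all $v$, hence to $u=T_\lambda(f+\lambda u)=T_\lambda f+Ku$; and since $T_\lambda f,Ku\in H_{W,V}(\mathbb{T})$, any $L^2_V$-solution of $(I-K)u=T_\lambda f$ automatically lies in $H_{W,V}(\mathbb{T})$. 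Taking $f=0$ shows that the space of weak solutions of the homogeneous problem $L_{W,V}u=0$ coincides with $\ker(I-K)$.

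Then I would invoke the Fredholm alternative for the compact operator $K$ on $L^2_V(\mathbb{T})$: either $I-K$ is injective, in which case it is bijective, so \eqref{fa:1} has a unique weak solution for every $f\in L^2_V(\mathbb{T})$ (alternative~1); or $\ker(I-K)\ne\{0\}$, in which case there is a nontrivial weak solution of the homogeneous problem (alternative~2) and $\dim\ker(I-K)=\dim\ker L_{W,V}<\infty$. In the latter case $(I-K)u=T_\lambda f$ is solvable if and only if $T_\lambda f\perp\ker(I-K^{*})=\ker(I-K)$, using self-adjointness of $K$. Finally, for $v\in\ker(I-K)$ one has $Kv=v$, so $\langle T_\lambda f,v\rangle_V=\langle f,T_\lambda v\rangle_V=\lambda^{-1}\langle f,Kv\rangle_V=\lambda^{-1}\langle f,v\rangle_V$; hence the compatibility condition $T_\lambda f\perp\ker(I-K)$ is precisely $\int_{\mathbb{T}}vf\,dV=0$ for all $v\in\ker L_{W,V}$.

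The algebraic bookkeeping relating weak solutions to $(I-K)u=T_\lambda f$ and the translation of the orthogonality condition are routine. The step deserving the most care — and the place where the structure of the problem really enters — is the verification that $K$ is simultaneously \emph{compact} and \emph{self-adjoint}: compactness rests on the Rellich--Kondrachov-type embedding of Theorem~\ref{compactemb}, and self-adjointness on the symmetry of the bilinear form $B_{W,V}$. Once these two properties are in place, the conclusion is a direct application of the spectral theory of compact operators.
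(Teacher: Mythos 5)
Your proposal is correct and follows essentially the same route as the paper: fix $\lambda>0$, invert $L_{W,V,\lambda}$ via Proposition \ref{prop:40}, set $K=\lambda L_{W,V,\lambda}^{-1}$, use Theorem \ref{compactemb} for compactness and the symmetry of $B_{W,V,\lambda}$ for self-adjointness, and conclude by the Fredholm alternative. In fact you supply two details the paper leaves implicit — the explicit verification that $\langle T_\lambda h,g\rangle_V=\langle h,T_\lambda g\rangle_V$ and the computation showing $T_\lambda f\perp\ker(I-K)$ is equivalent to $\int_{\mathbb{T}}vf\,dV=0$ — both of which are correct.
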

	\begin{proof}
		Fix $\lambda>0$. From Proposition \ref{prop:40}, given $g\in L^{2}_{V}(\mathbb{T})$ there exists a unique $u\in H_{W,V}(\mathbb{T})$ such that 
		$$B_{W,V,\lambda}[u,v]=(g,v)_{V}.$$
		Therefore, by existence and uniqueness, we can invert the operator on $g$ to obtain $L_{W,V,\lambda}^{-1}g:=u.$ Now note that $u\in H_{W,V}(\mathbb{T})$ satisfies \eqref{fa:1} if, and only if, 		for all $v\in H_{W,V}(\mathbb{T})$, we have
		$$B_{W,V,\lambda}[u,v]=(\lambda u+f,v)_{V}$$
        or equivalently, if, and only if, $L_{W,V,\lambda}^{-1}(\lambda u+f)=u$
		which holds if, and only if, $u-Ku=h,$
		where $h:=L_{W,V,\lambda}^{-1}f$ and $Ku:=\lambda L^{-1}_{W,V,\lambda}u$. We can now use bounds obtained on Proposition \ref{prop:40} and Theorem \ref{compactemb} to conclude that the operator $K:L_{V}^{2}(\mathbb{T})\to H_{W,V}(\mathbb{T})\subset L^{2}_{V}(\mathbb{T})$ is compact and self-adjoint, this last is due to the symmetry of $B_{W,V}$. The result now follows from Fredholm's alternative.
	\end{proof}
	
The previous result shows that $\ker L_{W,V}$ plays a key role in the study of weak solutions of (\ref{fa:1}). More explicitly, we have
	% $$ker L_{W,V}=\left\{w\in H_{W,V}(\mathbb{T}); \int_{\mathbb{T}}AD^{-}_{W}wD^{-}_{W}vdW+\int_{\mathbb{T}}\kappa^2 wvdV=0, \forall v\in H_{W,V}(\mathbb{T})\right\}.$$
 $$ker L_{W,V}=\left\{w\in H_{W,V}(\mathbb{T}); \forall v\in H_{W,V}(\mathbb{T}), B_{W,V}[w,v] = 0\right\}.$$
	In particular, for $v=w\in \ker L_{W,V}$ we have that
	$$\dfrac{A_{0}}{W(1)V(1)}\left\|w-\dfrac{1}{V(1)}\int_{\mathbb{T}}wdV\right\|^2_{V}\le A_{0}\|D^{-}_{W}w\|^2\le B_{W,V}[w,w]=0.$$
    So
    $$w\equiv \dfrac{1}{V(1)}\int_{\mathbb{T}}wdV.$$ 
	Thus, if there exists $w\in \ker L_{W,V}$ such that $\int_{\mathbb{T}}wdV\big/V(1)\not\equiv 0$, then $\kappa\equiv 0.$ In this case, there exists $u\in H_{W,V}(\mathbb{T})$ that is a weak solution of 
    $D^{+}_{V}AD^{-}_{W}u=f$
	if and only if 
	$\int_{\mathbb{T}}fdV=0.$
	Furthermore, the solution is unique in $H_{W,V}(\mathbb{T})$ up to a constant. On the other hand, if $\ker L_{W,V}=0$, then
	$L_{W,V}$ define a bijection between $H_{W,V}(\mathbb{T})$ and $L^{2}_{V}(\mathbb{T})$. This happens, for instance, when $\kappa$ is bounded away from zero.

	\begin{proposition}Let $\kappa$ bounded away from zero. The following assertions are true
		\begin{enumerate}
			\item The eigenvalues of $L_{W,V}$ are real, countable, and we enumerate them, according their multiplicity, in such a way that
			\begin{equation}\label{eig:1}
				0<\lambda_{1}\le \lambda_2 \le \ldots \to \infty.
			\end{equation}
			\item There exists an orthonormal basis $\{\phi_{k}\}_{i\in\mathbb{N}}$ of $L^{2}_{V}(\mathbb{T})$ where $\phi_{k}\in H_{W,V}(\mathbb{T})$ is a eigenvector associated to $\lambda_{k}$, i.e.
			$$L_{W,V}\phi_{k}=\lambda_{k}\phi_{k}\,\,in\,\,\mathbb{T}.$$
		\end{enumerate}
	\end{proposition}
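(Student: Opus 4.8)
The plan is to study the inverse operator $T := L_{W,V}^{-1}$ and apply the spectral theorem for compact, self-adjoint operators on the separable Hilbert space $L^2_V(\mathbb{T})$. Since $\kappa$ is bounded away from zero, the discussion preceding the statement gives $\ker L_{W,V} = \{0\}$, so by Proposition \ref{prop:40} the map $L_{W,V}\colon H_{W,V}(\mathbb{T})\to L^2_V(\mathbb{T})$ is a bijection and $T$ is well-defined on all of $L^2_V(\mathbb{T})$. First I would record three properties of $T$. \emph{Compactness:} $T$ factors as the bounded operator $L_{W,V}^{-1}\colon L^2_V(\mathbb{T})\to H_{W,V}(\mathbb{T})$, whose boundedness is the a priori estimate \eqref{ine:10}, followed by the compact embedding $H_{W,V}(\mathbb{T})\hookrightarrow L^2_V(\mathbb{T})$ of Theorem \ref{compactemb}; hence $T$ is compact. \emph{Self-adjointness:} for $f,g\in L^2_V(\mathbb{T})$, writing $u=Tf$ and $v=Tg$, the definition of weak solution and the symmetry of $B_{W,V}$ give
$$\langle Tf,g\rangle_V = \langle u,g\rangle_V = B_{W,V}[v,u] = B_{W,V}[u,v] = \langle f,v\rangle_V = \langle f,Tg\rangle_V.$$
\emph{Positivity and injectivity:} for $f\neq 0$, with $u=Tf$, Proposition \ref{energyest} together with Remark \ref{eng:1} (which makes the coercivity estimate valid on all of $H_{W,V}(\mathbb{T})$ once $\kappa$ is bounded away from zero, since then $0>-\kappa_0$) yields $\langle Tf,f\rangle_V = B_{W,V}[u,u]\ge \beta\|u\|_{W,V}^2>0$, so $u\neq 0$.

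Next I would invoke the spectral theorem. Since $L^2_V(\mathbb{T})$ is separable and $T$ is compact and self-adjoint, there is an orthonormal basis $\{\phi_k\}_{k\in\mathbb{N}}$ of $L^2_V(\mathbb{T})$ consisting of eigenvectors of $T$, say $T\phi_k=\mu_k\phi_k$. By injectivity and positivity of $T$ each $\mu_k$ is a strictly positive real number, and since $T$ is compact with infinite-dimensional range (as $H_{W,V}(\mathbb{T})$ is dense, hence infinite-dimensional, in $L^2_V(\mathbb{T})$ by Theorem \ref{thm:1}) the sequence $(\mu_k)$ accumulates only at $0$; reorder so that $\mu_1\ge\mu_2\ge\cdots>0$ with $\mu_k\to 0$. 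Set $\lambda_k:=\mu_k^{-1}$. Applying $L_{W,V}$ to the identity $L_{W,V}^{-1}\phi_k=\mu_k\phi_k$ gives $\phi_k=\mu_k L_{W,V}\phi_k$, so on one hand $\phi_k$ lies in the range of $T=L_{W,V}^{-1}$, namely $H_{W,V}(\mathbb{T})$, and on the other hand $L_{W,V}\phi_k=\lambda_k\phi_k$ weakly. Thus $\{\phi_k\}$ is an orthonormal basis of $L^2_V(\mathbb{T})$ made of eigenvectors of $L_{W,V}$ lying in $H_{W,V}(\mathbb{T})$, with real eigenvalues satisfying $0<\lambda_1\le\lambda_2\le\cdots\to\infty$; this is assertion 2. and the estimate \eqref{eig:1}. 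For the reality and countability in assertion 1., note conversely that if $L_{W,V}\phi=\lambda\phi$ for some $\phi\neq 0$ in $H_{W,V}(\mathbb{T})$, then $\lambda\neq 0$ (otherwise $\phi\in\ker L_{W,V}=\{0\}$) and $T\phi=\lambda^{-1}\phi$, so $\lambda^{-1}$ is an eigenvalue of $T$ and $\lambda$ appears in the list $(\lambda_k)$; hence the eigenvalues of $L_{W,V}$ are exactly the $\lambda_k$, which are real and countable.

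I do not expect a genuine obstacle here: the argument runs entirely on machinery already in place — the compact embedding (Theorem \ref{compactemb}), Lax--Milgram with the a priori bound \eqref{ine:10} (Proposition \ref{prop:40}), and the symmetry and coercivity of $B_{W,V}$ (Proposition \ref{energyest} and Remark \ref{eng:1}). The step needing the most care is the bookkeeping identifying eigenpairs of $T=L_{W,V}^{-1}$ with eigenpairs of $L_{W,V}$: one must verify that the eigenvectors produced by the spectral theorem genuinely belong to $H_{W,V}(\mathbb{T})$, so that $L_{W,V}\phi_k=\lambda_k\phi_k$ is meaningful, and that $\mu\mapsto\mu^{-1}$ is a bijection between the two spectra — both of which rely on the injectivity of $L_{W,V}$ furnished by $\kappa$ being bounded away from zero.
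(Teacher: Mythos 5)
Your proposal is correct and follows essentially the same route as the paper: both pass to the inverse operator $L_{W,V}^{-1}$, establish its compactness via the a priori bound \eqref{ine:10} composed with the compact embedding of Theorem \ref{compactemb}, its self-adjointness via the symmetry of $B_{W,V}$, and its positivity via coercivity, and then apply the spectral theorem for compact self-adjoint operators on a separable Hilbert space, recovering the eigenvalues of $L_{W,V}$ as the reciprocals $\lambda_k=\mu_k^{-1}\to\infty$. The only cosmetic difference is that the paper derives the strict lower bound directly from $\kappa_0\|u\|_V^2\le B_{W,V}[u,u]=\mu\|u\|_V^2$, giving $\lambda_k\ge\kappa_0$, while you argue positivity through the coercivity constant $\beta$; both are valid.
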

	\begin{proof} 
		We will prove 1. and 2. simultaneously. First of all, the linear operator $L_{W,V}:H_{W,V}(\mathbb{T})\to L^{2}_{V}(\mathbb{T})$ defined by
		$$L_{W,V}u=f \Leftrightarrow B_{W,V}[u,v]=(f,v), \forall v\in H_{W,V}(\mathbb{T})$$
		is well defined and is a bijection. Indeed, $L^{-1}_{W,V}: L^{2}_{V}(\mathbb{T})\to L^{2}_{V}(\mathbb{T})$ is linear operator, which is compact and symmetric. Moreover, by (\ref{ine:10}), if $L_{W,V}u=f$, there exists $C>0$ such that
		$$\|u\|_{V}\le\|u\|_{W,V}\le C \|f\|_{V} \Rightarrow \|L^{-1}_{W,V}f\|_{V}\le \|f\|_{V}.$$
		As we know, the immersion of $H_{W,V}(\mathbb{T})$ in $L^{2}(\mathbb{T})$ is compact, therefore $L^{-1}_{W,V}$ is compact. The symmetry easily follows from the symmetry of $B_{W,V}$. Clearly $0$ is not an eigenvalue of $L_{W,V}$ and, if $\mu\neq 0$ and $u\neq 0$ we have $$L_{W,V}u=\mu u \Leftrightarrow L^{-1}_{W,V}u=\frac{1}{\mu}u.$$ Therefore, since the eigenvectors of a symmetric operator belongs to $\mathbb{R}$, the eigenvectors of $L_{W,V}$ belongs to $\mathbb{R}.$ If $u$ and $\mu$ are taken as above, then the inequality
		$$\kappa_{0}\|u\|^{2}\le B_{W,V}[u,u]=\mu\|u\|^{2}$$
		implies that $0<\kappa_{0}\le\mu$. Hence, we can apply the spectral theorem for self-adjoint and compact operators on a separable Hilbert space to the operator $L_{W,V}^{-1}$. Therefore, there exists a complete orthonormal set $\{\phi_{k}\}_{k\in\mathbb{N}}$ of $L^{2}_{V}(\mathbb{T})$ constituted by eigenvectors of $L^{-1}_{W,V}$ where, each $\phi_{k}$ is associated to $\frac{1}{\lambda_{k}}$ with \begin{equation*}
			\frac{1}{\lambda_{1}}\geq \frac{1}{\lambda_{2}}\geq \ldots \geq \frac{1}{\lambda_{k}} \to 0.
		\end{equation*}
		This last fact is equivalent to (\ref{eig:1}).
	\end{proof}

\begin{remark} It noteworthy that the
operator $L_{W,V}$ was introduced in a very natural manner. Further, the existence of weak solutions as well as the results related to the eigenproblem
associated to $L_{W,V}$ followed very easily by using a modern machinery for that. Therefore, so far our approach do not look exhaustive
nor artificial. However, in \cite{feller} Feller did not want to consider this problem directly in that form. One possible reason for that may be due for the fact almost at the same time that Feller introduced its generalized second order operator, the theory of Sobolev spaces was gaining space as a suitable tool to study elliptic problems.
\end{remark}

\section{$W$-Brownian bridge and $W$-Brownian motion}\label{sect7}

In this section we will consider the Sobolev space $H_{W,V}(\mathbb{T})$ augmented with a Dirichlet-type condition, which we will show is a reproducing kernel Hilbert space, to define a generalization of the Brownian bridge as the mean zero Gaussian process with covariance function given by the reproducing kernel of that space. Then, we will use the $W$-Brownian bridge to define a generalization of the well-known Brownian motion, which we will denote by $B_W(t)$, such that has the following \emph{simultaneous} interesting features:

\begin{enumerate}
	\item The finite-dimensional distributions of $B_W$ are Gaussian;
	\item The sample paths of $B_W$ are c\`adl\`ag and may have jumps.
\end{enumerate}

We have a well-known family of processes that generalize the standard Brownian motion and satisfies condition 1, indeed, the fractional Brownian motion is such an example. However, the sample paths of the fractional Brownian motion are continuous, in fact, $\gamma$-H\"older continuous, for $\gamma<H$, where $H$ is the Hurst parameter of the fractional Brownian motion. We also have a well-known family of stochastic processes that
generalize the Brownian motion and that satisfies 2, namely the L\'evy processes. However, by Lévy–Khintchine characterization, its finite-dimensional distributions are Gaussian if, and only if, it is the Brownian motion. Therefore, it is unusual to have a process that generalizes the Brownian motion having Gaussian finite-dimensional distributions and that has jumps. In what follows, to keep notation simple, we will assume in this Section that $W(0) = 0$. 

We start by augmenting the space $H_{W,V}(\mathbb{T})$ with the Dirichlet-type condition. First, take any point in $\mathbb{T}$ and ``tag" it by identifying it as $zero$. Then, consider the standard identification between the torus and the interval $[0,1)$. Second, define
\begin{equation*}\label{eq:dirichletSob}
	H_{W,V,\mathcal{D}}(\mathbb{T}) = \{f\in H_{W,V}(\mathbb{T}): f(0) = 0\}
\end{equation*}
and endow it with the ``Dirichlet'' inner product:
\begin{equation*}\label{eq:dirichletinnerproduct}
\<f,g\>_{W,V,\mathcal{D}} := \int_{\mathbb{T}} D_W^- f D_W^- g dW.
\end{equation*}
It is immediate that $(H_{W,V,\mathcal{D}}(\mathbb{T}), \<\cdot,\cdot\>_{W,V,\mathcal{D}})$ is a Hilbert space. Further, for $s,t \in [0,1)$, let
$$\varrho_{W,0}(t,s) = W(t\land s) - \frac{W(t)W(s)}{W(1)}.$$
We have, from a straightforward computation, that for $t\in\mathbb{T}$, ${\varrho_{W,0}(\cdot, t)\in H_{W,V,\mathcal{D}}(\mathbb{T})}$ and that, for every $f\in H_{W,V,\mathcal{D}}(\mathbb{T})$ and every $t\in [0,1)$,
$$\<f,\rho_{W,0}(\cdot,t)\>_{W,V,\mathcal{D}} = f(t).$$
That is, $\varrho_{W,0}(\cdot,\cdot)$ is a reproducing kernel for $(H_{W,V,\mathcal{D}}(\mathbb{T}), \<\cdot,\cdot\>_{W,V,\mathcal{D}})$. In particular, the Sobolev-type space $(H_{W,V,\mathcal{D}}, \<\cdot,\cdot\>_{W,V,\mathcal{D}})$ is a reproducing kernel Hilbert space, e.g., \cite[Theorem 10.2]{wendland2004scattered}.

We are now in a position to define the $W$-Brownian bridge:

\begin{definition}\label{def:wbrownianbridge}
	Fix some point in $\mathbb{T}$ and ``tag'' it as zero. We define the $W$-Brownian bridge on $[0,1)\cong\mathbb{T}$, denoted by $B_{W,0}(\cdot)$, as the centered Gaussian process (that is Gaussian process with expected value zero) with covariance function given by $\varrho_{W,0}(\cdot,\cdot)$. 
\end{definition}

\begin{remark}
    By \cite[Theorem 10.4]{wendland2004scattered}, since $\varrho_{W,0}(\cdot,\cdot)$ is a reproducing kernel, it is positive semi-definite and, therefore, is a valid covariance function.
\end{remark}

We first show that $B_{W,0}(\cdot)$ indeed satisfies a bridge property. To this end, we will now work on the interval $[0,1]$. %Observe that by the same arguments in the previous sections one can easily introduce the corresponding definition of the $W$-$V$-Sobolev space $H_{W,V}(\mathbb{T})$. 
Let $C_{\mathcal{D}}([0,1])$ denote the space of càdlàg functions defined on $[0,1]$ that vanishes at $x=0$ and $x=1$ and define
$$H_{W,\mathcal{D}}([0,1]) =\left\{f\in C_{\mathcal{D}}([0,1]): \exists F\in L^{2}_{W,0}([0,1]); f(x)=\int_{0}^{x} F(s) dW(s)\right\}$$
endowed with the inner product 
\begin{equation*}\label{eq:dir_inner_prod}
    \<f,g\>_{W,\mathcal{D}} := \int_{[0,1]} D_W^- f D_W^- g dW.
\end{equation*}
It is clear that if we take any strictly increasing function $V:\mathbb{R}\to\mathbb{R}$ satisfying \eqref{periodic}, %and define the space 
%$$H_{W,V,\mathcal{D}}([0,1]) = \left\{f\in H_{W,V}(\mathbb{T}): f(0)=f(1)=0\right\},$$
%endowed with the inner product \eqref{eq:dir_inner_prod}, 
then, we have a natural identification 
\begin{equation}\label{eq:identification_dir_spaces}
    H_{W,\mathcal{D}}([0,1]) \cong H_{W,V,\mathcal{D}}(\mathbb{T}).
\end{equation}
This means from the left-hand side of \eqref{eq:identification_dir_spaces} that the underlying space is intrinsic relative to $V$ and, from the right-hand side, the underlying space might be embedded on any $L^{2}_{V}(\mathbb{T})$. In particular, we also point out that, this identification implies that $H_{W,\mathcal{D}}([0,1])$ is also a reproducing kernel Hilbert space with the same kernel $\varrho_{W,0}(\cdot,\cdot)$.

\begin{remark}\label{rem:wbbinterval}
	By the above considerations, it is clear that we can consider $\varrho_{W,0}:[0,1]\times[0,1]\to\mathbb{R}$ and define the $W$-Brownian bridge $B_{W,0}(\cdot)$ as the centered Gaussian process with covariance function $\varrho_{W,0}(\cdot,\cdot)$ on the interval $[0,1]$.
\end{remark}

We now prove the bridge property:

\begin{proposition}\label{prp:bridgeproperty}
	The $W$-Brownian bridge satisfies the following bridge process: $B_{W,0}(0) = B_{W,0}(1) = 0$ almost surely. 
\end{proposition}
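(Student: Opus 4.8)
The plan is to exploit the fact that the covariance function $\varrho_{W,0}(t,s)=W(t\wedge s)-W(t)W(s)/W(1)$ vanishes whenever one of its arguments is $0$ or $1$. Since $B_{W,0}$ is by definition a centered Gaussian process with this covariance, the random variable $B_{W,0}(0)$ is Gaussian with mean zero and variance $\varrho_{W,0}(0,0)=W(0)-W(0)^2/W(1)=0$ (recalling the standing assumption $W(0)=0$ in this section). A mean-zero Gaussian random variable with zero variance is almost surely equal to $0$, hence $B_{W,0}(0)=0$ a.s. The same computation with $t=s=1$ gives $\varrho_{W,0}(1,1)=W(1)-W(1)^2/W(1)=0$, so $B_{W,0}(1)=0$ a.s. as well.

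Concretely, I would write: by Remark \ref{rem:wbbinterval} the process $B_{W,0}(\cdot)$ is defined on $[0,1]$ with covariance $\varrho_{W,0}$. Then
\begin{equation*}
	\mathrm{Var}(B_{W,0}(0)) = \varrho_{W,0}(0,0) = W(0) - \frac{W(0)^2}{W(1)} = 0,
\end{equation*}
using $W(0)=0$, and likewise
\begin{equation*}
	\mathrm{Var}(B_{W,0}(1)) = \varrho_{W,0}(1,1) = W(1) - \frac{W(1)^2}{W(1)} = 0.
\end{equation*}
Since $B_{W,0}(0)$ and $B_{W,0}(1)$ are centered Gaussian random variables with zero variance, the Chebyshev inequality (or directly the fact that the law of such a variable is the Dirac mass at $0$) gives $B_{W,0}(0)=0$ and $B_{W,0}(1)=0$ almost surely, which is the claimed bridge property.

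There is essentially no real obstacle here — the statement is an immediate consequence of the definition of the covariance kernel together with the normalization $W(0)=0$. The only thing one must be mildly careful about is the bookkeeping: one should make sure that the process is being considered on $[0,1]$ (so that both endpoints $0$ and $1$ are genuine points of the index set rather than identified on the torus), which is precisely what Remark \ref{rem:wbbinterval} permits, and one should invoke the standing assumption $W(0)=0$ made at the start of the section so that $\varrho_{W,0}(0,0)=0$. Beyond that, the argument is a one-line variance computation.
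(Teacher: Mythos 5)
Your proof is correct and follows essentially the same route as the paper: both compute $\mathrm{Var}(B_{W,0}(0)) = \varrho_{W,0}(0,0) = 0 = \varrho_{W,0}(1,1) = \mathrm{Var}(B_{W,0}(1))$ using the normalization $W(0)=0$ and Remark \ref{rem:wbbinterval}, and conclude that a centered Gaussian variable with zero variance is almost surely zero. No gaps.
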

\begin{proof}
By Definition \ref{def:wbrownianbridge} and Remark \ref{rem:wbbinterval}, we have that
$$\textrm{Var}(B_{W,0}(0)) = \varrho_{W,0}(0,0) = 0 = \varrho_{W,0}(1,1) = \textrm{Var}(B_{W,0}(1)),$$
since we are assuming that $W(0) = 0$. This implies that $B_{W,0}(0)$ and $B_{W,0}(1)$ are, almost surely, constant equal to their expected values. Since $B_{W,0}$ has mean zero, the result follows.
\end{proof}

\begin{remark}
	There are two reasons we call the process $B_{W,0}(\cdot)$ in Definition \ref{def:wbrownianbridge} as the $W$-Brownian bridge. The first one is Proposition \ref{prp:bridgeproperty}, whereas the second one is 
	that if we take $W(x)=x$, we recover the covariance function of the standard Brownian bridge on $[0,1]$, e.g., \cite[Definition 40.2]{rogers2000diffusions}.
\end{remark}

The first consequence of this construction is that we directly have the Cameron-Martin space associated to $B_{W,0}(\cdot)$:

\begin{proposition}\label{prp:cmwbrownianbridge}
	Let $B_{W,0}(\cdot)$ be the $W$-Brownian bridge on $[0,1]$. Then, the Cameron--Martin space associated to $B_{W,0}(\cdot)$ is $(H_{W,\mathcal{D}}, \<\cdot,\cdot\>_{W,\mathcal{D}})$.
\end{proposition}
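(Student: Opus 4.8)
The plan is to invoke the standard characterization of the Cameron--Martin space of a centered Gaussian process in terms of its covariance kernel, and then identify that characterization with the space $(H_{W,\mathcal{D}}([0,1]),\langle\cdot,\cdot\rangle_{W,\mathcal{D}})$ via the reproducing kernel property already established. Recall the classical fact (see, e.g., any standard reference on Gaussian measures): if $X=\{X_t\}_{t\in[0,1]}$ is a centered Gaussian process with covariance kernel $K(s,t)=\mathrm{Cov}(X_s,X_t)$, then its Cameron--Martin space is precisely the reproducing kernel Hilbert space (RKHS) $\mathcal{H}_K$ associated with $K$, i.e.\ the completion of $\mathrm{span}\{K(\cdot,t):t\in[0,1]\}$ under the inner product determined by $\langle K(\cdot,s),K(\cdot,t)\rangle_{\mathcal{H}_K}=K(s,t)$. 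Thus it suffices to show that $\mathcal{H}_{\varrho_{W,0}}=(H_{W,\mathcal{D}}([0,1]),\langle\cdot,\cdot\rangle_{W,\mathcal{D}})$ as Hilbert spaces.

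First I would recall from the discussion preceding Remark~\ref{rem:wbbinterval} that $(H_{W,\mathcal{D}}([0,1]),\langle\cdot,\cdot\rangle_{W,\mathcal{D}})$ has already been shown to be a reproducing kernel Hilbert space with reproducing kernel $\varrho_{W,0}(\cdot,\cdot)$; this follows from the identification \eqref{eq:identification_dir_spaces} together with the reproducing property $\langle f,\varrho_{W,0}(\cdot,t)\rangle_{W,V,\mathcal{D}}=f(t)$ valid for all $f\in H_{W,V,\mathcal{D}}(\mathbb{T})$ and $t\in[0,1)$ (and the endpoint $t=1$ being handled since $\varrho_{W,0}(1,1)=0$, so $\varrho_{W,0}(\cdot,1)\equiv 0$ as an element of the space). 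Since the RKHS associated to a given positive semi-definite kernel is unique (the Moore--Aronszajn theorem), we conclude $\mathcal{H}_{\varrho_{W,0}}=(H_{W,\mathcal{D}}([0,1]),\langle\cdot,\cdot\rangle_{W,\mathcal{D}})$. Combining this with the covariance-kernel characterization of the Cameron--Martin space of a centered Gaussian process yields the claim. I would also remark that $\varrho_{W,0}$ is indeed a valid (positive semi-definite) kernel, which is needed both to make sense of $B_{W,0}(\cdot)$ and to apply Moore--Aronszajn; this was already noted in the Remark following Definition~\ref{def:wbrownianbridge}.

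The main obstacle is not conceptual but one of bookkeeping: one must be careful that the version of the ``covariance kernel $=$ Cameron--Martin space'' theorem being invoked is stated for a process with index set $[0,1]$ and that the RKHS structure transported through the identification \eqref{eq:identification_dir_spaces} (which a priori was set up over $\mathbb{T}$, hence over $[0,1)$) genuinely matches the space $H_{W,\mathcal{D}}([0,1])$ defined directly on the closed interval, including the boundary behaviour at $t=1$. Concretely, the only subtlety is that functions in $H_{W,\mathcal{D}}([0,1])$ vanish at both endpoints, which is exactly what the kernel $\varrho_{W,0}$ enforces ($\varrho_{W,0}(0,t)=\varrho_{W,0}(1,t)=0$ for all $t$), so the closed-interval and torus formulations coincide. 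Once this identification is pinned down, the proof is immediate, so I would keep it short: state the Gaussian-process/RKHS correspondence, cite the already-established RKHS structure of $(H_{W,\mathcal{D}}([0,1]),\langle\cdot,\cdot\rangle_{W,\mathcal{D}})$, and invoke uniqueness of the RKHS.
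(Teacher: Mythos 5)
Your proposal is correct and follows essentially the same route as the paper: the paper likewise invokes the standard result (citing Janson) that the Cameron--Martin space of a centered Gaussian process is the RKHS whose reproducing kernel is the covariance function, and then identifies that RKHS with $(H_{W,\mathcal{D}},\langle\cdot,\cdot\rangle_{W,\mathcal{D}})$ using the already-established reproducing property of $\varrho_{W,0}$. Your additional remarks on Moore--Aronszajn uniqueness and the endpoint $t=1$ are sound but not needed beyond what the paper records.
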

\begin{proof}
It follows from \cite[Theorem 8.15 and Corollary 8.16]{janson_gaussian} that the Cameron-Martin space associated to the centered Gaussian process $B_{W,0}(\cdot)$ is the reproducing kernel Hilbert space, whose reproducing kernel is given by its covariance function, $\varrho_{W,0}(\cdot,\cdot)$, that is, the Cameron-Martin space is $(H_{W,\mathcal{D}}, \<\cdot,\cdot\>_{W,\mathcal{D}})$.
\end{proof}

Observe that Proposition \ref{prp:cmwbrownianbridge} establishes a deep and strong connection between the $W$-Brownian bridge, and the Sobolev-type spaces developed in this paper. The Cameron-Martin
spaces appear naturally on Malliavin calculus as the set of ``directions'' in which one can differentiate (with respect to the Malliavin derivative). Recently Bolin and Kirchner \cite{bolinkirchner} showed that the Cameron-Martin
spaces play a key role in find the best linear predictor with the ``wrong'' covariance structure. More generally, the Cameron-Martin space uniquely determines the Gaussian measure (see \cite[Theorem 2.9]{daprato}) on Banach spaces. Furthermore, the Cameron-Martin space is, in a sense, independent of the Banach space in which the Gaussian measure is defined (see \cite[Proposition 2.10]{daprato}). This means that one can specify a Gaussian measure by specifing the Cameron-Martin space. This means that we can take any strictly increasing $V:\mathbb{R}\to\mathbb{R}$ satisfying \eqref{periodic} and, by computing the Cameron-Martin space of $B_W$, we are showing that the distribution of $W$-Brownian motion in $L^2_V(\mathbb{T})$ is the unique Gaussian measure defined on the Borel sets of $L^2_V(\mathbb{T})$ associated to the Sobolev space $H_{W,V,\mathcal{D}}(\mathbb{T})$, which is identified with $H_{W,\mathcal{D}}([0,1])$, which means that the $W$-Brownian motion takes values in $L^2_V(\mathbb{T})$. Finally, several applications of Cameron-Martin spaces to statistics and probability can be found, e.g., in \cite{berlinet2009reproducing,gine2021mathematical}.

\begin{remark}\label{remark_L2_v}
    The fact that we can take any $V$ so that the $W$-Brownian motion takes values on $L^2_V(\mathbb{T})$ will be important in the next section, where it will play a key role in obtaining existence and uniqueness of solutions of second-order linear stochastic differential equations.
\end{remark}

We will now provide another important application of the identification of the Cameron-Martin space of $B_{W,0}(\cdot)$ in Proposition \ref{prp:cmwbrownianbridge}. More precisely, we will use Proposition \ref{prp:cmwbrownianbridge} together with the results in \cite{kunsch1979gaussian} to show that the $W$-Brownian motion is a Gaussian Markov random field.

This is a non-trivial application of Proposition \ref{prp:cmwbrownianbridge}, since the standard way of proving that the standard Brownian bridge satisfies a Markov property requires stochastic calculus, which we have not yet developed for such generalized differential operators. Indeed, the standard way is by showing that the Brownian bridge is an Itô process, and then use that Itô processes are Markov processes, e.g., \cite[Exercise 5.4 an
d Theorem 7.1.2]{oksendal2003stochastic}. 

We start with the following result:

\begin{proposition}\label{prp:markovfield}
	The $W$-Brownian bridge on $[0,1]$ is a Gaussian Markov random field in the sense of \cite{kunsch1979gaussian}.
\end{proposition}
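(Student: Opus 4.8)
The plan is to verify the hypotheses of the characterization of Gaussian Markov random fields given in \cite{kunsch1979gaussian}, which states that a centered Gaussian process is a Markov random field precisely when its associated reproducing kernel Hilbert space (the Cameron--Martin space) is, roughly speaking, \emph{local}: the inner product splits as an integral of a local Dirichlet-type form, so that functions with disjoint supports (away from boundary points) are orthogonal, and a function supported in a region is determined by its boundary values together with the local data. By Proposition \ref{prp:cmwbrownianbridge}, the Cameron--Martin space of $B_{W,0}(\cdot)$ is $(H_{W,\mathcal{D}}([0,1]),\<\cdot,\cdot\>_{W,\mathcal{D}})$, and its reproducing kernel is $\varrho_{W,0}(\cdot,\cdot)$. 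The key structural fact I would exploit is that the inner product has the purely local form
$$\<f,g\>_{W,\mathcal{D}} = \int_{[0,1]} D_W^- f\, D_W^- g\, dW,$$
which is an integral of a ``first-order energy'' density; this is exactly the type of local energy form for which \cite{kunsch1979gaussian} guarantees the Markov property.

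First I would make precise the identification of $H_{W,\mathcal{D}}([0,1])$ with a space on which the Künsch criterion is stated, namely recalling from Theorem \ref{sobchar} that every $f\in H_{W,\mathcal{D}}([0,1])$ is of the form $f(x)=\int_{0}^{x}F(s)\,dW(s)$ with $F=D_W^- f\in L^2_{W,0}([0,1])$, and noting that $f\mapsto D_W^- f$ is an isometry onto a closed subspace of $L^2_W([0,1])$. Second, I would verify the \emph{splitting/locality} property: given $0\le a<b\le 1$, decompose $[0,1]=[0,a]\cup[a,b]\cup[b,1]$ and check that the energy $\int_{[0,1]}(D_W^- f)^2\,dW$ decomposes additively over these subintervals, so that conditioning $B_{W,0}$ on its values at $a$ and $b$ makes the restrictions to $[0,a]\cup[b,1]$ and to $[a,b]$ independent; concretely, I would compute that for a fixed pair of boundary values the conditional process on $[a,b]$ is again a (rescaled) $W$-Brownian bridge, using the explicit kernel $\varrho_{W,0}$ and the Gaussian conditioning formulas. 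Third, I would check the boundary/continuity hypotheses of \cite{kunsch1979gaussian} — essentially that the kernel $\varrho_{W,0}(\cdot,\cdot)$ is continuous on $[0,1]^2$ and that point evaluations are continuous on $H_{W,\mathcal{D}}$, which is immediate since it is a reproducing kernel Hilbert space with the explicit continuous kernel above — and then invoke the theorem in \cite{kunsch1979gaussian} to conclude that $B_{W,0}(\cdot)$ is a Gaussian Markov random field.

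The main obstacle I anticipate is matching the precise technical framework of \cite{kunsch1979gaussian} to the present setting: Künsch's criterion is typically phrased for Gaussian fields indexed by an open set with a specified notion of ``boundary'' and requires identifying the reproducing kernel space with a concrete function space carrying a local Dirichlet form, and one must be careful that the atoms of $W$ (which may be dense) do not spoil the locality of the energy form or the continuity of the kernel. I expect this to be manageable because the energy density $(D_W^- f)^2\,dW$ is genuinely local even in the presence of atoms — an atom of $W$ at a point $x_0$ contributes energy ``at $x_0$'' only — so the additive decomposition of the energy over subintervals, and hence the conditional independence, goes through; the verification amounts to the Gaussian conditioning computation with the explicit covariance $\varrho_{W,0}$, which is routine once the decomposition is set up.
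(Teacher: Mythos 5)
Your proposal is correct and follows essentially the same route as the paper: identify the Cameron--Martin space via Proposition \ref{prp:cmwbrownianbridge}, observe that the inner product $\<f,g\>_{W,\mathcal{D}}=\int_{[0,1]} D_W^- f\, D_W^- g\, dW$ is a local (Dirichlet-type) energy form, and invoke K\"unsch's criterion. The paper states this in one line, citing \cite[Theorem 5.1]{kunsch1979gaussian}; your additional verifications (additive splitting of the energy over subintervals, continuity of the kernel) are just the details behind the word ``local'' that the paper leaves implicit.
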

\begin{proof}
	Since the space $(H_{W,\mathcal{D}}([0,1]), \<\cdot,\cdot\>_{W,\mathcal{D}})$ is local, it is, thus, an immediate consequence of Proposition \ref{prp:cmwbrownianbridge} and \cite[Theorem 5.1]{kunsch1979gaussian}.
\end{proof}

For a Borel set $S\subset [0,1]$, let
$$\mathcal{F}_+^{B_{W,0}}(S) = \bigcap_{\epsilon>0} \sigma(B_{W,0}(s): s\in S_\epsilon),$$
where $S_\epsilon = \{t\in [0,1]: d(t,S)<\epsilon\}$ is the $\epsilon$-neighborhood of $S$. Proposition \ref{prp:markovfield} implies, in particular, that for every $s\in [0,1]$ and every integrable (with respect to the probability measure) $U$ that is $\mathcal{F}_+^{B_{W,0}}([t,1])$-measurable, we have
\begin{equation}\label{eq:markovidentity}
	E(U|\mathcal{F}_+^{B_{W,0}}([0,s])) = E(U|\mathcal{F}_+^{B_{W,0}}(\{s\})).
\end{equation}

In particular, by taking $U = f(B_{W,0}(t))$ in \eqref{eq:markovidentity}, where $f$ is any bounded measurable function and $0\leq s<t\leq 1$, we obtain that 
$$	E(f(B_{W,0}(t))|\mathcal{F}_+^{B_{W,0}}([0,s])) = E(f(B_{W,0}(t))|\mathcal{F}_+^{B_{W,0}}(\{s\})).$$

We are now in a position to use the $W$-Brownian bridge to define the $W$-Brownian motion on $[0,1]$:

\begin{definition}\label{def:wbrownianmotion}
	Let $B_{W,0}(\cdot)$ be a $W$-Brownian bridge on $[0,1]$ and let $B_1\sim N(0,W(1))$ be an Gaussian random variable that is independent of $\sigma(B_{W,0}(t): t\in [0,1])$. We define the $W$-Brownian motion (in law) as the process
	$$B_W(t) = B_{W,0}(t) + \frac{W(t)}{W(1)} B_1.$$
\end{definition}

We have some immediate consequences:

\begin{proposition}\label{prp:wbrowniancovfunc}
	Let $B_W(\cdot)$ be a $W$-Brownian motion (in law) on $[0,1]$. Its covariance function is given by 
	$$\varrho_W(t,s) = W(t\land s), \quad t,s\in [0,1].$$
	Furthermore, $B_W(\cdot)$ is a $W$-Brownian motion (in law) if, and only if, 
	$$S(t) = B_W(t) - \frac{W(t)}{W(1)} B_W(1)$$
	is a $W$-Brownian bridge. In particular, the $W$-Brownian bridge can be obtained from $W$-Brownian motion (in law) $B_W(\cdot)$ by conditioning $B_W(\cdot)$ on $B_W(1) = 0$.
\end{proposition}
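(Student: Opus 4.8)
The plan is to establish the three assertions in turn, all via elementary computations with Gaussian covariances together with the bridge property of Proposition \ref{prp:bridgeproperty}. First I would compute the covariance of $B_W$. By Definition \ref{def:wbrownianmotion}, $B_W(t) = B_{W,0}(t) + \frac{W(t)}{W(1)}B_1$ with $B_{W,0}$ centered and independent of $B_1\sim N(0,W(1))$; hence $B_W$ is centered, and expanding the covariance bilinearly and discarding the cross terms by independence gives
$$\varrho_W(t,s) = \varrho_{W,0}(t,s) + \frac{W(t)W(s)}{W(1)^2}\,\mathrm{Var}(B_1) = \Bigl(W(t\land s) - \frac{W(t)W(s)}{W(1)}\Bigr) + \frac{W(t)W(s)}{W(1)} = W(t\land s),$$
using $\mathrm{Var}(B_1)=W(1)$ and that $t,s\in[0,1]$ so $t\land 1 = t$.

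For the equivalence, the forward implication is immediate from the construction: realizing $B_W$ in law as $B_{W,0}(\cdot) + \frac{W(\cdot)}{W(1)}B_1$ and using $B_{W,0}(1)=0$ almost surely (Proposition \ref{prp:bridgeproperty}), one has $B_W(1) = B_1$ (jointly, a.s.), whence $S(t) = B_{W,0}(t) + \frac{W(t)}{W(1)}B_1 - \frac{W(t)}{W(1)}B_1 = B_{W,0}(t)$, which is a $W$-Brownian bridge. Conversely, from $S$ one recovers $B_W(t) = S(t) + \frac{W(t)}{W(1)}B_W(1)$; since $S(1)=0$ a.s., the residual randomness $B_W(1)$ is the only remaining degree of freedom, and under the natural Gaussian structure in force here — namely that $B_W$ is centered Gaussian with $B_W(1)\sim N(0,W(1))$ independent of $S$ — this representation is precisely the one in Definition \ref{def:wbrownianmotion}, so $B_W$ is a $W$-Brownian motion in law. (Equivalently, and without invoking that structure, one checks directly that if $B_W$ is centered Gaussian with $S$ of covariance $\varrho_{W,0}$, reversing the bilinear expansion of the first step yields $\varrho_W(t,s)=W(t\land s)$.)

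Finally, for the conditioning statement I would use the decomposition once more: in the representation $B_W(t) = B_{W,0}(t) + \frac{W(t)}{W(1)}B_1$ one has $B_W(1)=B_1$ with $B_{W,0}$ independent of $B_1$, so a regular conditional distribution of $(B_W(t))_{t\in[0,1]}$ given $B_W(1)=0$ coincides with the law of $(B_{W,0}(t) + \frac{W(t)}{W(1)}\cdot 0)_{t} = (B_{W,0}(t))_{t}$, i.e.\ a $W$-Brownian bridge. The same conclusion follows from the Gaussian regression formula: the conditional process is centered Gaussian with covariance $W(t\land s) - \frac{W(t\land 1)W(s\land 1)}{W(1)} = \varrho_{W,0}(t,s)$.

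I do not expect a genuine obstacle: the content is entirely Gaussian covariance algebra plus the bridge property. The only points requiring care are (i) making precise, for the converse of the equivalence, what ``$W$-Brownian motion in law'' entails — in particular that the time-$1$ value is $N(0,W(1))$ and independent of $S$ — and (ii) justifying the use of regular conditional distributions for a Gaussian process in the last assertion, which is standard since one may reduce everything to finite-dimensional distributions.
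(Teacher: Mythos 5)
Your proposal is correct and is essentially the paper's own argument with the details filled in: the paper's proof consists of the single line that both claims ``follow directly from Definitions \ref{def:wbrownianbridge} and \ref{def:wbrownianmotion}'' together with a citation to \cite[Remark~9.10]{janson_gaussian} for the conditioning statement, and your covariance expansion, the identification $B_W(1)=B_1$ via $B_{W,0}(1)=0$ a.s., and the regular-conditional-distribution (Gaussian regression) computation are exactly what that unpacks to. Your caveat about the converse of the equivalence is well taken and in fact sharper than the paper: as literally stated the ``if'' direction fails without assuming $B_W$ is centered Gaussian with $B_W(1)\sim N(0,W(1))$ independent of $S$ (e.g.\ $B_W=B_{W,0}$ itself yields $S=B_{W,0}$, a $W$-Brownian bridge, while $B_W$ is not a $W$-Brownian motion in law), so making that structure explicit, as you do, is the right fix.
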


\begin{proof}
	Both claims follow directly from Definitions \ref{def:wbrownianbridge} and \ref{def:wbrownianmotion}. The last statement follows from \cite[Remark 9.10]{janson_gaussian}.
 \end{proof}

% Let, now, 
% $$H_{W,V,0}([0,1]) = \{f\in H_{W,V}(\mathbb{T}): f(0) = 0\}$$
% endowed with the inner product 
% $$\<f,g\>_{W,V,0,[0,1]} := \int_{[0,1]} D_W^- f D_W^- g dW.$$

% A simple calculation shows that, for every $t\in [0,1]$, $\varrho_W(\cdot,t) \in H_{W,V,0}([0,1])$ and that for every $f\in H_{W,V,0}([0,1])$, we have
% $$\<f,\varrho_W(\cdot,t)\>_{W,V,0,[0,1]} = f(t), \quad t\in [0,1].$$

% Thus, by same reasoning as before, we have that:

% \begin{proposition}\label{prp:cmWBM}
% 	Let $B_W(\cdot)$ be a $W$-Brownian motion (in law) on $[0,1]$. The Cameron--Martin space associated to $B_W(\cdot)$ is $(H_{W,V,0}([0,1]), \<\cdot,\cdot\>_{W,V,0,[0,1]})$.
% \end{proposition}
% \begin{proof}
% By the above considerations, $(H_{W,V,0}([0,1]), \<\cdot,\cdot\>_{W,V,0,[0,1]})$ is a reproducing kernel Hilbert space whose reproducing kernel is given by the covariance function of $B_W(\cdot)$. Thus, the result follows by applying \cite[Theorem 8.15 and Corollary 8.16]{janson_gaussian}.
% \end{proof}

% The above Proposition shows that the $W$-Brownian motion is also deeply connected to the $W$-$V$-Sobolev spaces introduced in this work.

The expression for the covariance function $\varrho_W(\cdot,\cdot)$ allows us to also obtain a characterization of the $W$-Brownian motion that is very reminiscent of the classical one:
\begin{proposition}\label{W-brownian-char}
	 $B_W(t)$ is a $W$-Brownian motion (in law) if, and only if, it satisfies the following conditions:
	\begin{enumerate}
		\item $B_W(0) = 0$ almost surely;
		\item If $t>s$, then $B_W(t)-B_W(s)$ is independent of $\sigma(B_W(u); u\leq s)$;
		\item If $t>s$, then $B_W(t)-B_W(s)$ has $N(0,W(t)-W(s))$ distribution.
	\end{enumerate}
\end{proposition}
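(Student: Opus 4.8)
The plan is to prove the equivalence by showing both implications, using the covariance function $\varrho_W(t,s) = W(t\land s)$ established in Proposition \ref{prp:wbrowniancovfunc}, together with the fact that $W$-Brownian motion (in law) is, by Definition \ref{def:wbrownianmotion}, a centered Gaussian process (being the sum of the centered Gaussian process $B_{W,0}$ and an independent centered Gaussian variable scaled deterministically). The key observation is that, for jointly Gaussian processes, the three listed conditions are statements purely about means and covariances, so everything reduces to algebraic identities involving $W$.

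First I would prove the forward implication. Assume $B_W(\cdot)$ is a $W$-Brownian motion in law. Condition 1 follows since $B_W(0) = B_{W,0}(0) + \frac{W(0)}{W(1)}B_1 = 0$ almost surely, using $W(0)=0$ and Proposition \ref{prp:bridgeproperty}. For condition 3, note that $B_W(t)-B_W(s)$ is Gaussian with mean zero and, using $\varrho_W$, variance $\textrm{Var}(B_W(t)) - 2\textrm{Cov}(B_W(t),B_W(s)) + \textrm{Var}(B_W(s)) = W(t) - 2W(s) + W(s) = W(t)-W(s)$ when $t>s$. For condition 2, since the whole process is jointly Gaussian, it suffices to check that $B_W(t)-B_W(s)$ is uncorrelated with $B_W(u)$ for every $u\le s$; indeed $\textrm{Cov}(B_W(t)-B_W(s), B_W(u)) = W(t\land u) - W(s\land u) = W(u) - W(u) = 0$. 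Uncorrelatedness plus joint Gaussianity gives the required independence of $B_W(t)-B_W(s)$ from $\sigma(B_W(u); u\le s)$.

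For the converse, assume $B_W(\cdot)$ satisfies conditions 1--3. The subtle point is that conditions 1--3 alone do not a priori say the finite-dimensional distributions are jointly Gaussian; one must bootstrap this. By condition 1, $B_W(0)=0$; then for $0 = t_0 < t_1 < \cdots < t_n$, write $B_W(t_k) = \sum_{j=1}^{k}(B_W(t_j)-B_W(t_{j-1}))$, and by conditions 2 and 3 the increments $B_W(t_j)-B_W(t_{j-1})$ are independent Gaussian variables with distribution $N(0, W(t_j)-W(t_{j-1}))$. Hence $(B_W(t_1),\dots,B_W(t_n))$ is a linear image of an independent Gaussian vector, thus jointly Gaussian; so $B_W(\cdot)$ is a centered Gaussian process. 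Its covariance is computed, for $s\le t$, by $\textrm{Cov}(B_W(t),B_W(s)) = \textrm{Cov}(B_W(s) + (B_W(t)-B_W(s)), B_W(s)) = \textrm{Var}(B_W(s)) + 0 = W(s) = W(t\land s)$, using condition 2 (independence of the increment from $\sigma(B_W(u); u\le s)$, in particular from $B_W(s)$) and condition 3 with $s=t_1$, $B_W(0)=0$. Therefore $B_W(\cdot)$ is a centered Gaussian process with covariance $\varrho_W(t,s) = W(t\land s)$, which by Proposition \ref{prp:wbrowniancovfunc} characterizes $W$-Brownian motion in law (since $S(t) = B_W(t) - \frac{W(t)}{W(1)}B_W(1)$ is then centered Gaussian with the covariance $\varrho_{W,0}$, hence a $W$-Brownian bridge, and the decomposition in Definition \ref{def:wbrownianmotion} holds with $B_1 = B_W(1)$, independent of $\sigma(S(t): t\in[0,1])$ by the same covariance computation).

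The main obstacle, and the only place requiring genuine care, is the converse direction: extracting joint Gaussianity of the finite-dimensional distributions from the increment conditions, and then verifying that the triple $(S(\cdot), B_W(1))$ has the independence structure demanded by Definition \ref{def:wbrownianmotion} — this again reduces to a covariance computation $\textrm{Cov}(S(t), B_W(1)) = W(t\land 1) - \frac{W(t)}{W(1)}W(1) = W(t) - W(t) = 0$ together with joint Gaussianity, but one must be careful to state that uncorrelated jointly Gaussian vectors are independent. Everything else is routine bookkeeping with the covariance function and the hypotheses $W(0)=0$ and the periodicity/monotonicity of $W$.
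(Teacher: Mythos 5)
Your proof is correct and follows essentially the same route as the paper's: both directions reduce to computations with the covariance function $\varrho_W(t,s)=W(t\land s)$ from Proposition \ref{prp:wbrowniancovfunc}. The one place where you go beyond the paper is the converse: the paper only computes $\mathrm{Cov}(B_W(t),B_W(s))=W(t\land s)$ from conditions 1--3, which by itself does not identify the law of the process unless one already knows the finite-dimensional distributions are jointly Gaussian. Your bootstrapping step --- writing $(B_W(t_1),\dots,B_W(t_n))$ as a linear image of the independent Gaussian increments supplied by conditions 2 and 3 --- supplies exactly that missing joint Gaussianity, and your final check that $S(t)=B_W(t)-\tfrac{W(t)}{W(1)}B_W(1)$ is a bridge independent of $B_W(1)$ correctly closes the loop with Definition \ref{def:wbrownianmotion}. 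So your argument is the same in spirit but is the more complete of the two.
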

\begin{proof}
Condition 1. follows from the assumption that $W(0)=0$ and that $B_W(\cdot)$ has mean zero. Conditions 2. and 3. are immediate consequences of the fact that $B_W(\cdot)$ is a Gaussian process whose covariance function is $\varrho_W(t,s) = W(t\land s), t,s\in [0,1]$. Conversely, if conditions 1-3 hold, then, for $t>s$,
\begin{align*}
	\textrm{Cov}(B_W(t),B_W(s)) &= E(B_W(t)B_W(s)) \\
	&= E((B_W(t)-B_W(s))B_W(s)) + E(B_W(s)B_W(s))\\
	&= E(B_W(s)B_W(s))=W(s),
\end{align*}
since $B_W(s) = B_W(s) - B_W(0)\sim N(0,W(s))$.
\end{proof}

Figure \ref{fig:1}, below, contains a realization of a $W$-Brownian bridge and of a $W$-Brownian motion for a function $W(x)$ such that for $0\leq x < 0.25$, $W(x) = x/2$, for $0.25\leq x < 0.6$, $W(x) = 1+2x$ and for $0.6\leq x\leq 1$, $W(x) = 2+\exp(2x)$.

\begin{figure}[h]
    \centering
    \includegraphics[scale=0.30]{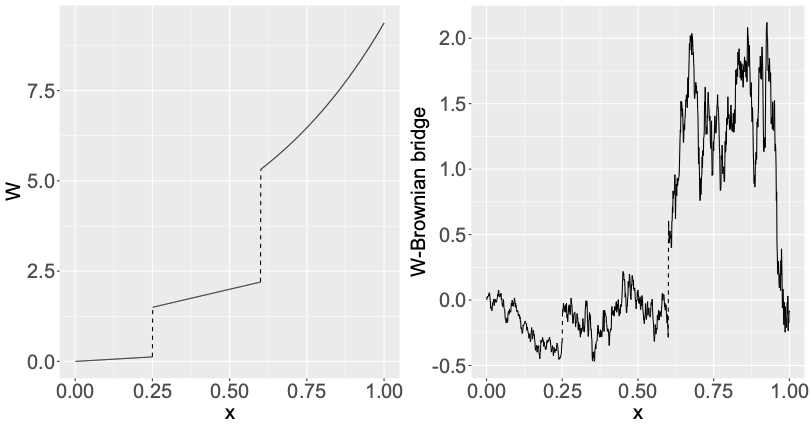}
        \includegraphics[scale=0.30]{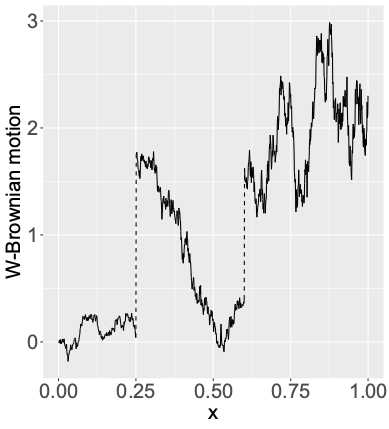}
    \caption{An example of a realization of a $W$-Brownian bridge (middle) and of a $W$-Brownian motion (right) for a specific function $W$ (left).}
    \label{fig:1}
\end{figure}

We now define the $W$-Brownian motion on $[0,T], T>0$, by additionally requiring that the sample paths are c\`adl\`ag:

\begin{definition}\label{W-brownian}
	We say that $B_W(t)$ is a $W$-Brownian motion if it satisfies the following conditions:
	\begin{enumerate}
		\item $B_W(0) = 0$ almost surely;
		\item If $t>s$, then $B_W(t)-B_W(s)$ is independent of $\sigma(B_W(u); u\leq s)$;
		\item If $t>s$, then $B_W(t)-B_W(s)$ has $N(0,W(t)-W(s))$ distribution;
		\item $B_W(s)$ has c\`adl\`ag sample paths.
	\end{enumerate}
\end{definition}

\begin{remark}
	Notice that if $t^\ast$ is a discontinuity point of $W$, then there exists some $\varepsilon>0$ such that for every $h>0$,
	$$P(|B_W(t^\ast-h) - B_W(t^\ast)|>0) \geq \varepsilon.$$
	Indeed, it is enough to take any $\varepsilon>0$ such that $P(|N(0,\Delta W(t^\ast)|>0) > \varepsilon$, where $\Delta W(t^\ast) = W(t^\ast) - W(t^\ast-)$.
\end{remark}

\begin{remark}
	The previous remark then implies that $B_W$ is not continuous in probability so, in particular, $B_W$ is not an additive process (see, for instance, \cite{sato} for a definition of additive processes). Indeed, it is easy to see that the $W$-Brownian motion is an additive process if, and only if, $W$ is continuous.
	Furthermore, it is also straightforward that the $W$-Brownian motion is a Lévy process if, and only if, $W(x) = ax$ for some $a>0$.
\end{remark}

Our next goal is to show that $B_W(\cdot)$ admits, almost surely, a c\`adl\`ag modification. This will be obtained as a particular case of the fact (that we are going to show) that $B_W(\cdot)$ is actually a Feller process.

We, thus, start by showing that $B_W(\cdot)$ is a Markov process. Observe that, contrary to the $W$-Brownian bridge, it is actually trivial to directly check the Markov property:

\begin{proposition}\label{prp:markovWB}
	Let $B_W(\cdot)$ be a $W$-Brownian motion in law and let $\mathcal{F}_t:=\sigma(B_W(s):s\leq t)$ be its natural filtration. Then, $B_W(\cdot)$ is a martingale and a Markov process with respect to $(\mathcal{F}_t)_{t\in [0,1]}$.
\end{proposition}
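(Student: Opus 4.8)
The plan is to verify the martingale and Markov properties directly from the characterization of the $W$-Brownian motion (in law) given in Proposition \ref{W-brownian-char}, using only the independence and Gaussian-increment structure listed there, together with elementary properties of conditional expectation. Since $B_W(t)-B_W(s)\sim N(0,W(t)-W(s))$ has mean zero, and since by condition 2 of Proposition \ref{W-brownian-char} the increment $B_W(t)-B_W(s)$ is independent of $\mathcal{F}_s = \sigma(B_W(u):u\leq s)$, we immediately get for $t>s$ that
\begin{equation*}
E(B_W(t)\mid\mathcal{F}_s) = E(B_W(t)-B_W(s)\mid\mathcal{F}_s) + B_W(s) = E(B_W(t)-B_W(s)) + B_W(s) = B_W(s),
\end{equation*}
which is the martingale property. (One should also note $B_W(t)\in L^1$, indeed $L^2$, since it is Gaussian with variance $W(t)$.)

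For the Markov property, the key step is to show that for any bounded measurable $f:\mathbb{R}\to\mathbb{R}$ and any $0\leq s<t$,
\begin{equation*}
E\bigl(f(B_W(t))\mid\mathcal{F}_s\bigr) = E\bigl(f(B_W(t))\mid\sigma(B_W(s))\bigr).
\end{equation*}
I would write $B_W(t) = B_W(s) + (B_W(t)-B_W(s))$, set $Y := B_W(t)-B_W(s)$ and $X := B_W(s)$, and use that $Y$ is independent of $\mathcal{F}_s$ while $X$ is $\mathcal{F}_s$-measurable. By the standard ``freezing'' lemma for conditional expectations (if $X$ is $\mathcal{G}$-measurable and $Y$ is independent of $\mathcal{G}$, then $E(h(X,Y)\mid\mathcal{G}) = g(X)$ where $g(x) = E(h(x,Y))$), applied with $h(x,y) = f(x+y)$, we obtain $E(f(B_W(t))\mid\mathcal{F}_s) = g(B_W(s))$ with $g(x) = E(f(x+Y)) = \int_{\mathbb{R}} f(x+y)\,\mathcal{N}_{0,W(t)-W(s)}(dy)$. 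The same computation with $\mathcal{G} = \sigma(B_W(s))$ (noting $Y$ is still independent of this smaller $\sigma$-algebra) gives $E(f(B_W(t))\mid\sigma(B_W(s))) = g(B_W(s))$ as well, and the two agree. This extends from bounded measurable $f$ to the general Markov property in the usual way.

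I do not expect a genuine obstacle here: the proposition is essentially the observation that Proposition \ref{W-brownian-char} encodes exactly an ``independent increments relative to the past'' structure, which is the defining feature of a (possibly time-inhomogeneous) Markov process, and the martingale property is just centeredness of the increments. The only points requiring a little care are integrability (handled by Gaussianity) and being explicit that the freezing lemma applies because condition 2 gives independence of the \emph{whole} past $\sigma$-algebra, not merely of $B_W(s)$; this is precisely what makes the conditional expectation collapse onto $\sigma(B_W(s))$. If a fully self-contained argument is desired, one can instead verify the finite-dimensional version directly: for $0\leq s_1<\cdots<s_n = s<t$, the vector $(B_W(s_1),\ldots,B_W(s_n),B_W(t)-B_W(s))$ has independent components in the last slot relative to the first $n$, so the conditional law of $B_W(t)$ given $(B_W(s_1),\ldots,B_W(s_n))$ depends only on $B_W(s_n)$, and a monotone-class argument promotes this to $\mathcal{F}_s$.
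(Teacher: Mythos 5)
Your proposal is correct and follows essentially the same route as the paper: the martingale property via the decomposition $B_W(t)=B_W(s)+(B_W(t)-B_W(s))$ with centered increments independent of $\mathcal{F}_s$, and the Markov property via the independence/freezing lemma (which the paper simply cites from Karatzas--Shreve rather than spelling out as you do). No gaps.
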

\begin{proof}
	By Condition 3 of Definition \ref{W-brownian}, $E(B_W(t) - B_W(s)) = 0$, and by Condition 2 of \ref{W-brownian}, we have that, for $t>s$,
	\begin{eqnarray*}
		E(B_W(t)|\sigma(B_W(u), u\leq s)) &=&  E(B_W(t)- B_W(s)|\sigma(B_W(u), u\leq s)) + B_W(s)\\
		&=& E(B_W(t)- B_W(s)) + B_W(s)= B_W(s).
	\end{eqnarray*}
This shows that $B_W(\cdot)$ is a martingale. Let us now show that $B_W(\cdot)$ is a Markov process. To this end, let $A$ be a Borel set in $[0,1]$, $0\leq s<t\leq 1$ and observe that since $B_W(t)-B_W(s)$ is independent of $\mathcal{F}_s$, it follows that, e.g. from \cite[Problem 5.9]{karatzas2012brownian}, that
\begin{align*}
	E(\textbf{1}_{B_W(t) \in A}|\mathcal{F}_s) &= E(\textbf{1}_{B_W(t) - B_W(s) + B_W(s) \in A}|\mathcal{F}_s)\\
	&= E(\textbf{1}_{B_W(t) - B_W(s) + B_W(s) \in A}|B_W(s)) = E(\textbf{1}_{B_W(t) \in A}|B_W(s)).
\end{align*}
This proves the Markov property.
\end{proof}

Consider the following definition that has an important role in the preceding discussion. 

\begin{definition}\label{def:markov_semi}
    A two parameter Markov semigroup consists in a family $\{T_{s,t}\}_{s,t\in\mathbb{R}}$, of bounded linear operators from $L^{\infty}(\mathbb{R})$ to $L^{\infty}(\mathbb{R})$ if the following conditions are satisfied for every $0\leq s\leq u\leq t$:

    \begin{enumerate}
        \item $T_{s,s}=I$, for every $t\in \mathbb{R};$
        \item $T_{s,t} 1 = 1$;
        \item if $f\in L^\infty(\mathbb{R})$ and $f\geq 0$, then $T_{s,t} f \geq 0$;
        \item $T_{s,t} = T_{s,u}T_{u,t}$ for every $s\le u\le t$.
    \end{enumerate}
    
\end{definition}

\begin{corollary}\label{cor:markovsemigroup}
	Let $B_W(\cdot)$ be a $W$-Brownian motion (in law) and let, for $x\in \mathbb{R}$ and a measurable function $f$,
	$$E^{s,x}(f(B_W(t))) := E(f(B_W(t+s) + x)).$$
	Let, also, for each $s,t\in [0,1]$, with $s\leq t$, $T_{s,t}: L^\infty(\mathbb{R})\to L^\infty(\mathbb{R})$ be given by
	$$(T_{s,t}f)(x) := E^{s,x}(f(B_W(t) - B_W(s))).$$
	Then, the family $(T_{s,t})_{s\leq t\in [0,1]}$ is a two-parameter Markov semigroup on $[0,1]$.
\end{corollary}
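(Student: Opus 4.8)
The plan is to verify the four defining properties of Definition \ref{def:markov_semi} directly from the definition of $T_{s,t}$ and the characterization of the $W$-Brownian motion in Proposition \ref{W-brownian-char}. First I would unwind the notation: by the definition of $E^{s,x}$, for $f\in L^\infty(\mathbb{R})$ and $s\le t$ in $[0,1]$,
$$(T_{s,t}f)(x) = E^{s,x}\bigl(f(B_W(t)-B_W(s))\bigr) = E\bigl(f(B_W(t)-B_W(s)+x)\bigr),$$
where the last equality uses that shifting the time origin by $s$ does not change the law of the increment $B_W(t)-B_W(s)$ when we only care about this single increment (more precisely, one should note $(T_{s,t}f)(x)=E(f(B_W((t-s)+s)-B_W(0+s)+x))$, and since by Proposition \ref{W-brownian-char} the increment $B_W(t)-B_W(s)$ has law $N(0,W(t)-W(s))$, the quantity $(T_{s,t}f)(x)$ equals $E(f(Z+x))$ with $Z\sim N(0,W(t)-W(s))$). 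Recording this Gaussian-convolution formula up front makes all four checks essentially immediate.

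Next I would dispatch properties 1--3. For property 1, when $s=t$ the increment $B_W(s)-B_W(s)=0$ identically, so $(T_{s,s}f)(x)=f(x)$, i.e. $T_{s,s}=I$. For property 2, with $f\equiv 1$ we get $(T_{s,t}1)(x)=E(1)=1$. For property 3, if $f\ge 0$ then $f(Z+x)\ge 0$ pointwise, hence its expectation is $\ge 0$, so $T_{s,t}f\ge 0$; boundedness of $T_{s,t}$ as an operator on $L^\infty(\mathbb{R})$ is clear since $\|T_{s,t}f\|_\infty\le\|f\|_\infty$.

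The substantive point is property 4, the Chapman--Kolmogorov/semigroup identity $T_{s,t}=T_{s,u}T_{u,t}$ for $s\le u\le t$. Here I would use the independence of increments (Condition 2 of Definition \ref{W-brownian}) together with the Gaussian-convolution formula. Writing $Z_1 = B_W(u)-B_W(s)$ and $Z_2 = B_W(t)-B_W(u)$, these are independent with $Z_1\sim N(0,W(u)-W(s))$ and $Z_2\sim N(0,W(t)-W(u))$, and $B_W(t)-B_W(s)=Z_1+Z_2\sim N(0,W(t)-W(s))$ — consistent with the telescoping of $W$. Then
$$\bigl(T_{s,u}(T_{u,t}f)\bigr)(x) = E\bigl((T_{u,t}f)(Z_1+x)\bigr) = E\bigl(E\bigl(f(Z_2+Z_1+x)\bigm|Z_1\bigr)\bigr) = E\bigl(f(Z_1+Z_2+x)\bigr) = (T_{s,t}f)(x),$$
where the second equality is the definition of $T_{u,t}$ applied at the point $Z_1+x$ (legitimate since $Z_2$ is independent of $Z_1$, so conditionally on $Z_1=z$ the inner expectation is exactly $(T_{u,t}f)(z+x)$), and the third is the tower property. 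The only mild subtlety — and the step I expect to require the most care — is justifying the conditional-expectation manipulation rigorously, i.e. that $E(f(Z_2+Z_1+x)\mid Z_1) = (T_{u,t}f)(Z_1+x)$ almost surely; this is a standard consequence of the independence of $Z_1$ and $Z_2$ (e.g.\ via the substitution lemma, \cite[Problem 5.9]{karatzas2012brownian} or \cite[Lemma ...]{} as already invoked in the proof of Proposition \ref{prp:markovWB}), but it is worth stating explicitly since the whole semigroup structure hinges on it. Having verified all four properties, the family $(T_{s,t})_{s\le t\in[0,1]}$ is a two-parameter Markov semigroup, completing the proof.
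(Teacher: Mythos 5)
Your proof is correct, and it verifies everything the paper's (very terse) proof leaves implicit. The one substantive difference is in how you obtain the semigroup identity $T_{s,t}=T_{s,u}T_{u,t}$: the paper says this "follows directly from Proposition \ref{prp:markovWB}" (the Markov/martingale property of $B_W$), whereas you bypass that proposition entirely and derive the identity from the independence of the increments $Z_1=B_W(u)-B_W(s)$ and $Z_2=B_W(t)-B_W(u)$ together with the additivity of the Gaussian variances, $\bigl(W(u)-W(s)\bigr)+\bigl(W(t)-W(u)\bigr)=W(t)-W(s)$ — in effect, the convolution identity $N(0,a)\ast N(0,b)=N(0,a+b)$ applied to the kernels $p_{s,t}$. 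Since the Markov property in Proposition \ref{prp:markovWB} is itself proved from independent increments, the two routes rest on the same underlying fact; your version is more self-contained and makes explicit the conditional-expectation substitution $E(f(Z_2+Z_1+x)\mid Z_1)=(T_{u,t}f)(Z_1+x)$ that the paper's citation glosses over. Your reading of the (admittedly ambiguous) notation $E^{s,x}(f(B_W(t)-B_W(s)))$ as a Gaussian convolution with variance $W(t)-W(s)$ is the right one — it is confirmed by the explicit kernel $p_{s,t}(x,y)$ displayed later in the proof of Theorem \ref{thm:WBFeller}. No gaps.
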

\begin{proof}
We need to check the conditions in Definition \ref{def:markov_semi}. However, all of them are straightforward, except for the semigroup property 
$T_{s,t} = T_{s,u}T_{u,t},$
with $s\leq u\leq t$, which follows directly from Proposition \ref{prp:markovWB}.
\end{proof}

Recall the following definition

\begin{definition}\label{two_par_feller}
    A two parameter Markov semigroup (in the sense of Definition \ref{def:markov_semi} is said to be a Feller semigroup if the following conditions are satisfied for all $0\leq s\leq t$:
    \begin{enumerate}
        \item $T_{s,t}$ is a bounded operator from $C_0(\mathbb{R})$ into $C_0(\mathbb{R})$;
        \item for every $f\in C_0(\mathbb{R})$, $\|T_{s,t} f\|_{\infty} \leq \|f\|_{\infty}$;
        \item for all $0\leq v\leq w$, we have 
        $$\lim_{\substack{(s,t)\to (v,w)\\ s\leq t}} \left\|T_{v,w}-T_{s,t} \right\|=0.$$
    \end{enumerate}
\end{definition}

We are now in a position to prove that $B_W(\cdot)$ is a Feller process in the sense of Definition \ref{two_par_feller}.

\begin{theorem}\label{thm:WBFeller}
	Let $B_W(\cdot)$ be a $W$-Brownian motion (in law) on $[0,1]$, then $B_W(\cdot)$ is a Feller process.
\end{theorem}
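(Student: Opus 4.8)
The plan is to pass to the explicit, classical form of the transition operators via the time‑change representation of $B_W$. By Proposition~\ref{prp:wbrowniancovfunc}, $B_W(\cdot)$ is a centered Gaussian process with covariance $\varrho_W(t,s)=W(t\land s)$, which is exactly the statement that $B_W(t)$ has the law of $\beta(W(t))$ for a standard Brownian motion $\beta$; in particular $B_W(t)-B_W(s)\sim N(0,W(t)-W(s))$ for $s\le t$. Unwinding the definition in Corollary~\ref{cor:markovsemigroup} then gives
$$(T_{s,t}f)(x)=E\bigl(f(x+B_W(t)-B_W(s))\bigr)=(P_{W(t)-W(s)}f)(x),$$
where $(P_\tau)_{\tau\ge0}$ is the classical heat semigroup on $\mathbb{R}$ (convolution against $N(0,\tau)$, with $P_0=I$). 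It remains to verify the three conditions of Definition~\ref{two_par_feller} for $(T_{s,t})$, and by the displayed identity each reduces to a property of $(P_\tau)_{\tau\ge0}$.

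Conditions~1 and~2 are immediate from classical facts about the heat semigroup: each $P_\tau$ maps $C_0(\mathbb{R})$ into $C_0(\mathbb{R})$ (continuity of $P_\tau f$ and decay at infinity both follow from dominated convergence, using the integrable majorant $\|f\|_\infty$ times the Gaussian density), and $\|P_\tau f\|_\infty\le\|f\|_\infty$ because $N(0,\tau)$ is a probability measure; hence $T_{s,t}=P_{W(t)-W(s)}$ inherits both.

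The real content is Condition~3. Fix $0\le v\le w$ and $f\in C_0(\mathbb{R})$, which is uniformly continuous with modulus $\omega_f$ (so $\omega_f(\delta)\to0$ as $\delta\downarrow0$ and $\omega_f\le2\|f\|_\infty$). Coupling all the Gaussians through a single $Z\sim N(0,1)$ and using $|f(a)-f(b)|\le\omega_f(|a-b|)$, one gets, for every $x$,
$$\bigl|(T_{s,t}f)(x)-(T_{v,w}f)(x)\bigr|\le E\Bigl(\omega_f\bigl(\bigl|\sqrt{W(t)-W(s)}-\sqrt{W(w)-W(v)}\bigr|\,|Z|\bigr)\Bigr),$$
and the right‑hand side does not depend on $x$, so it also bounds $\|T_{s,t}f-T_{v,w}f\|_\infty$; by dominated convergence it tends to $0$ provided $W(t)-W(s)\to W(w)-W(v)$. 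This single estimate also covers the degenerate case $W(v)=W(w)$, i.e., $T_{v,w}=I$, where it expresses that Gaussian kernels of vanishing variance form an approximate identity on the uniformly continuous $f$. Thus Condition~3 reduces entirely to the scalar statement that $W(t)-W(s)\to W(w)-W(v)$ as $(s,t)\to(v,w)$ with $s\le t$.

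I expect this last convergence to be the main obstacle, precisely because $W$ is only assumed càdlàg: at continuity points of $W$ it is automatic, but near a jump point of $W$ one must use monotonicity together with right‑continuity (and the existence of left limits) of $W$ to control $W(s)$ and $W(t)$ along every admissible approach $s\le t$ — this is exactly the point at which the jump structure of $W$, which produces the jumps of $B_W$, interfaces with the Feller property, and it is the only step that is not purely formal. Once Condition~3 is established, $(T_{s,t})_{s\le t}$ is a Feller semigroup in the sense of Definition~\ref{two_par_feller}, and being the transition semigroup of $B_W(\cdot)$ it exhibits $B_W(\cdot)$ as a Feller process; this in turn is what yields the càdlàg modification discussed before the statement.
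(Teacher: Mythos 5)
Your reduction $T_{s,t}=P_{W(t)-W(s)}$, where $(P_\tau)_{\tau\ge 0}$ is the classical heat semigroup, is exactly the starting point of the paper's proof, and your verification of Conditions 1 and 2 of Definition \ref{two_par_feller} is fine. The gap is Condition 3, which you yourself flag as ``the main obstacle'' and ``the only step that is not purely formal'' --- and then do not prove. Worse, the scalar statement you reduce it to, namely $W(t)-W(s)\to W(w)-W(v)$ as $(s,t)\to(v,w)$ with $s\le t$, is \emph{false} in general: if $v$ is a discontinuity point of $W$ and $s\uparrow v$, then $W(s)\to W(v-)\neq W(v)$, so along that approach $W(t)-W(s)\to W(w)-W(v-)\neq W(w)-W(v)$. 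Concretely, at a jump point $v=w=t^{\ast}$, taking $t=t^{\ast}$ and $s\uparrow t^{\ast}$ gives $T_{s,t}f\to P_{\Delta W(t^{\ast})}f\neq f=T_{v,w}f$. No combination of monotonicity, right-continuity and left limits rescues a genuinely two-sided limit at a jump of $W$; the jumps of $B_W$ are precisely the failure of two-sided stochastic continuity, as the paper's own remark after Definition \ref{W-brownian} points out. (A secondary mismatch: your bound involves the modulus $\omega_f$ and therefore yields only strong convergence $\|T_{s,t}f-T_{v,w}f\|_\infty\to 0$ for each fixed $f$, not the operator-norm convergence that Condition 3 literally asks for.)

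The paper avoids this trap by checking only the one-sided condition that the cited results actually require: for fixed $s$ and every $\delta>0$,
$$\sup_{x\in\mathbb{R}}\int_{\{y:\,|x-y|\geq \delta\}} p_{s,t}(x,y)\, dy \longrightarrow 0 \quad\text{as } t\to s+,$$
which holds because $W$ is right-continuous, and then delegates the remaining work to \cite[Theorem 3.2]{bottcher2014feller} and \cite[Proposition 2.4.1]{khoshnevisan2002multiparameter}. The repair of your argument is therefore to restrict the limit in Condition 3 to the one-sided approaches $t\to s+$ (equivalently, to the uniform stochastic continuity from the right of the transition kernels); there your coupling estimate does close, by right-continuity and monotonicity of $W$. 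As written, however, your proof stops at its acknowledged crux and, taken at face value, asserts a convergence that fails at every discontinuity point of $W$.
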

\begin{proof}
Observe that since $B_W(t)\sim N(0,W(t))$, so that $B_W(t)+x\sim N(x,W(t))$, it follows that for $f\in L^\infty(\mathbb{R})$ (and, in particular, for $f\in C_0(\mathbb{R})$),
$$(T_{s,t}f)(x) = \int_{\mathbb{R}} p_{s,t}(x,y) f(y) dy,$$
where $p_{s,t}(x,y) = (2\pi (W(t)-W(s))^{-1/2} \exp\left\{ -(y-x)^2/(2(W(t)-W(s)))\right\}.$
Now, in view of \cite[Theorem 3.2]{bottcher2014feller} and \cite[Proposition 2.4.1]{khoshnevisan2002multiparameter}, it is enough to observe that for fixed $s>0$ and any $\delta>0$,
$$\sup_{x\in\mathbb{R}}\int_{y\in\mathbb{R}: |x-y|\geq \delta} p_{s,t}(x,y) dy = \sqrt{\frac{2}{\pi}} \int_{\delta (W(t)-W(s))^{-1/2}}^\infty e^{-z^2/2} dz \stackrel{t\to s+}{\longrightarrow} 0,$$
since $W$ is right-continuous. 
\end{proof}

\begin{remark}
	The explicit expression for $T_{s,t}f$ in the proof of Theorem \ref{thm:WBFeller} allows us to show that $u_{f,s}(t,x) := (T_{s,t}f)(x)$ solves the following Kolmogorov-type equation:
	$$\begin{cases}
		\partial_{W,t}^+ u_{f,s}(t,x) = \frac{\partial^2}{\partial x^2} u_{f,s}(t,x),\\
	u_{f,s}(s,x) = f(x),
	\end{cases}$$
where $\partial_{W,t}^+$ stands for the $W$-right-derivative operator with respect to ``time'' $t$, and $\frac{\partial^2}{\partial x^2}$ is the standard second-order partial derivative with respect to $x$.
\end{remark}

Observe that Theorem \ref{thm:WBFeller} directly gives us several good properties for the $W$-Brownian motion in law. First, that it admits, almost surely, a c\`adl\`ag modification. This shows, in particular, that the $W$-Brownian motion given in Definition \ref{W-brownian} exists. Furthermore, let $\mathcal{F}_{s,t} := \sigma(B_W(t): t\geq s)$ (note that $\mathcal{F}_t = \mathcal{F}_{0,t})$, and also let
$$\overline{\mathcal{F}}_{s,t}^\ast := \bigcap_{u>t} \overline{\mathcal{F}}_{s,u}, \quad t\geq s,$$
be the complete augmented filtration induced by $B_W(\cdot)$, where $\overline{\mathcal{F}}_{s,t}$ is the $P$-completion of $\mathcal{F}_{s,t}$, which consists of adding the $P$-null sets to $\overline{\mathcal{F}}_{s,t}$. 
%Let $\overline{\mathcal{F}}_{t} := \overline{\mathcal{F}}_{0,t}$ and $\overline{\mathcal{F}}_{t}^\ast := \overline{\mathcal{F}}_{0,t}^\ast$, for $t\in [0,1]$. 
The direct consequences of Theorem \ref{thm:WBFeller} are summarized in the following corollary:

\begin{corollary}\label{cor:conseFeller}
	Let $B_W(\cdot)$ be a $W$-Brownian motion (in law) on $[0,1]$. Then,
	\begin{enumerate}
		\item $B_W(\cdot)$ admits a c\`adl\`ag modification that is itself a Feller process with the same transition function as $B_W(\cdot)$;
		\item The $W$-Brownian motion given in Definition \ref{W-brownian} exists, in particular the $W$-Brownian bridge has a modification with c\`adl\`ag sample paths;
		\item Fix any $s \in [0,1)$ and let $B_W(\cdot)$ be a $W$-Brownian motion. Then, $B_W(\cdot) - B_W(s)$ is also a Feller process with respect to its complete augmented filtration $(\overline{\mathcal{F}}_{s,t}^\ast)_{t\in [0,1]}$;
		\item Let $B_W(\cdot)$ be a $W$-Brownian motion (as in Definition \ref{W-brownian}), then $B_W(\cdot)$ is a strong Markov process;
		\item $B_W(\cdot)$ satisfies a Blumenthal's 0-1 law: for any $s \in [0,1]$ and $A\in \overline{\mathcal{F}}^\ast_{s,s}$, we have either $P(A) =0$ or $P(A)=1$.
	\end{enumerate}
\end{corollary}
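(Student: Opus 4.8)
The plan is to obtain all five items as essentially formal consequences of the Feller property established in Theorem~\ref{thm:WBFeller}, invoking the standard machinery for Feller processes; the only genuine work is the bookkeeping needed to pass from the two-parameter (time-inhomogeneous) setting to the time-homogeneous one in which the classical theorems are stated. The device I would use throughout is the \emph{space--time process} $\widetilde{X}_t := (s+t,\, B_W(s+t))$ on the state space $[0,1]\times\mathbb{R}$: using right-continuity of $W$ exactly as in the proof of Theorem~\ref{thm:WBFeller}, one checks that $\widetilde X$ is a time-homogeneous Feller process, and then each conclusion below transfers back to $B_W$.

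For items~1 and~2, I would first apply the classical existence theorem for c\`adl\`ag modifications of Feller processes (e.g.\ \cite{rogers2000diffusions}, or the two-parameter version underlying \cite{bottcher2014feller}) to produce a modification $\widetilde B_W$ of $B_W$ with c\`adl\`ag sample paths. Being a modification, $\widetilde B_W$ has the same finite-dimensional distributions as $B_W$, hence the same transition function $(T_{s,t})$, and is therefore itself a Feller process, which gives item~1. For item~2, conditions 1--3 of Definition~\ref{W-brownian} are purely distributional and so are inherited by $\widetilde B_W$, which additionally has c\`adl\`ag sample paths; thus $\widetilde B_W$ is a $W$-Brownian motion in the sense of Definition~\ref{W-brownian}, establishing its existence. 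The bridge statement then follows from Proposition~\ref{prp:wbrowniancovfunc}: the process $S(t) = \widetilde B_W(t) - \frac{W(t)}{W(1)}\widetilde B_W(1)$ is a $W$-Brownian bridge, and since $t\mapsto W(t)$ is c\`adl\`ag and $\widetilde B_W$ has c\`adl\`ag paths, $S$ has c\`adl\`ag paths.

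For item~3, fix $s\in[0,1)$ and consider the increment process $B_W(\cdot)-B_W(s)$. By condition~2 of Definition~\ref{W-brownian} its increments past $s$ are independent of the past, and by condition~3 they are Gaussian with the stated variances; as in the proof of Theorem~\ref{thm:WBFeller} (together with Proposition~\ref{W-brownian-char}) this shows that its transition semigroup is $(T_{u,v})_{s\le u\le v\le 1}$, which is Feller. Hence $B_W(\cdot)-B_W(s)$ is a Feller process for its natural filtration, and to upgrade to the completed, right-continuous augmented filtration $(\overline{\mathcal{F}}_{s,t}^{\ast})$ I would invoke the standard fact that the usual augmentation of the natural filtration of a Feller process preserves the (simple, and in fact strong) Markov property; concretely, the independence in condition~2 extends from $\mathcal{F}_{s,t}$ to $\overline{\mathcal{F}}_{s,t}^{\ast}$ by a monotone-class argument together with the observation that a $\sigma$-field consisting of $P$-null sets and their complements is independent of every $\sigma$-field. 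Item~4 is then the classical theorem that a Feller process is strongly Markov, applied to the space--time process and read back to $B_W$, for which I would cite \cite{bottcher2014feller} (or \cite{rogers2000diffusions}). Finally, for item~5 I would apply item~3: the Feller process $B_W(\cdot)-B_W(s)$ starts deterministically at $0$ at ``time'' $s$, so conditioning on $\overline{\mathcal{F}}_{s,s}^{\ast}$ and using right-continuity of the paths together with the Feller property to identify the conditional law at $s$ as $\delta_0$ yields $\textbf{1}_A = P(A\mid\overline{\mathcal{F}}_{s,s}^{\ast}) = P(A)$ almost surely for every $A\in\overline{\mathcal{F}}_{s,s}^{\ast}$; since $B_W(\cdot)$ and $B_W(\cdot)-B_W(s)$ differ by the $\overline{\mathcal{F}}_{s,s}^{\ast}$-measurable random variable $B_W(s)$, they generate the same germ $\sigma$-field at $s$, so the dichotomy holds for $B_W$ itself.

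The main obstacle, as indicated above, is not a hard estimate but the time-inhomogeneity: the textbook statements of the c\`adl\`ag-modification theorem, the strong Markov property, and Blumenthal's $0$--$1$ law are all phrased for one-parameter Feller semigroups, so the cleanest route is the space--time reduction, whose one recurring verification is that right-continuity of $W$ makes the space--time semigroup strongly continuous (equivalently, that $p_{s,t}(x,\cdot)\to\delta_x$ as $t\downarrow s$), which is exactly what was checked in the proof of Theorem~\ref{thm:WBFeller}; a secondary routine point is the compatibility of all conclusions with the ``usual conditions'' on the filtration.
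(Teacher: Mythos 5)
Your route is essentially the paper's: the published proof is a list of citations (\cite[Theorem 3.2]{bottcher2014feller}, \cite[Theorems 4.1.1, 4.1.2 and 4.2.2]{khoshnevisan2002multiparameter}, \cite[Theorem (2.7), p.91]{revuz2013continuous}) whose common mechanism is exactly the space--time reduction you spell out; items 1--4, and the derivation of item 2 from item 1 together with Proposition \ref{prp:wbrowniancovfunc}, match the intended argument with more of the bookkeeping made explicit.

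The one step that fails is the final transfer in item 5. The claim that $B_W(\cdot)$ and $B_W(\cdot)-B_W(s)$ ``generate the same germ $\sigma$-field at $s$'' is false for $s>0$: the germ field of $B_W$ at $s$ contains $\sigma(B_W(s))$, since $B_W(s)$ is measurable with respect to $\sigma(B_W(v)\colon s\le v\le u)$ for every $u>s$, and $B_W(s)\sim N(0,W(s))$ is non-degenerate because $W$ is strictly increasing with $W(0)=0$; hence that germ field contains events such as $\{B_W(s)>0\}$ of probability $1/2$, and no $0$--$1$ law can hold for it. The germ field of the increment process $B_W(\cdot)-B_W(s)$, by contrast, is independent of $\sigma(B_W(s))$ and is indeed trivial by the first part of your argument. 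So the dichotomy holds for the germ $\sigma$-field of the increment process --- which is the filtration introduced in item 3, and is the reading under which the paper's ``item 5 is an immediate consequence of item 3'' is coherent --- but it does not extend to the germ field of $B_W$ itself. Deleting the last sentence of your item-5 argument leaves a correct proof of the statement as intended.
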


\begin{proof}
Item 1 follows from straightforward adaptations of \cite[Theorem 3.2]{bottcher2014feller} and \cite[Theorem 4.1.1]{khoshnevisan2002multiparameter} and \cite[Theorem (2.7), p.91]{revuz2013continuous}, item 2 is an immediate consequence of 1 and Proposition \ref{prp:wbrowniancovfunc}. Item 3 follows from straightforward adaptations of \cite[Theorem 3.2]{bottcher2014feller} and  \cite[Theorem 4.1.2]{khoshnevisan2002multiparameter}, whereas item 4 follows from adaptations of \cite[Theorem 3.2]{bottcher2014feller} and \cite[Theorem 4.2.2]{khoshnevisan2002multiparameter}. Finally, item 5 is an immediate consequence of item 3. 
\end{proof}

If $W$ has finitely many discontinuity points and $W$ is H\"older continuous on each continuous subinterval of the form $[d_i, d_{i+1})$, where $\{d_i; i=1,\ldots, N\}$ is the set of discontinuity points, where $N\in\mathbb{N}$, then the set of discontinuity points of the sample paths will be contained in the set of discontinuity points of $W$:
\begin{proposition}\label{kolmchen}
	If the set of discontinuity points of $W$ is finite, say $0\leq d_1 < d_2 <\ldots < d_N$, for some $n\in\mathbb{N}$, and if we let $I_1 = [0,d_1), I_2 = [d_1,d_2), \ldots, I_{N} = [d_{N-1},d_N), I_{N+1} = [d_N,1)$, and for each interval $I_k$, there exists some $\gamma_k\in (0,1]$, such that the restriction of $W$ to $I_k$ is $\gamma_k$-H\"older continuous, then $B_W$ has a modification whose sample paths are continuous on each interval $I_k, k=1,\ldots, N+1$.
\end{proposition}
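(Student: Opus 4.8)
The plan is to apply the Kolmogorov--Chentsov continuity theorem on each interval $I_k$ individually, and then to patch the resulting continuous modifications together by routing through the c\`adl\`ag modification already obtained in Corollary \ref{cor:conseFeller}.

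First I would record a moment bound for the increments. Fix $k\in\{1,\dots,N+1\}$ (with $d_0 = 0$, $d_{N+1}=1$); for $s,t\in I_k$, Proposition \ref{prp:wbrowniancovfunc} together with the Gaussianity of $B_W(\cdot)$ gives that $B_W(t)-B_W(s)$ is centered Gaussian with variance $W(t)-2W(s\wedge t)+W(s)=|W(t)-W(s)|$, so that for every $p\in\mathbb{N}$,
$$E\big|B_W(t)-B_W(s)\big|^{2p}=(2p-1)!!\,|W(t)-W(s)|^{p}\le (2p-1)!!\,L_k^{p}\,|t-s|^{p\gamma_k},$$
where $L_k$ is a H\"older constant of the restriction of $W$ to $I_k$.

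Next, choosing $p=p_k$ with $p_k\gamma_k>1$, the right-hand side above is bounded by $C_k|t-s|^{1+(p_k\gamma_k-1)}$ with a strictly positive excess exponent, so the Kolmogorov--Chentsov criterion (e.g., \cite{karatzas2012brownian}) applies: exhausting $I_k=[d_{k-1},d_k)$ by the compact subintervals $[d_{k-1},d_k-1/n]$ and letting $n\to\infty$ produces a modification $Y^{(k)}=(Y^{(k)}_t)_{t\in I_k}$ of $(B_W(t))_{t\in I_k}$ with, almost surely, (locally H\"older) continuous sample paths on $I_k$. Finally, to obtain a single global process I would take the c\`adl\`ag modification $X$ from item 1 of Corollary \ref{cor:conseFeller}: for each $k$, $X$ and $Y^{(k)}$ are both modifications of $B_W(\cdot)$ on $I_k$, hence coincide almost surely on a fixed countable dense subset of $I_k$, and since both are right-continuous on $I_k$ they are indistinguishable there; thus the restriction of $X$ to $I_k$ is almost surely continuous, and intersecting the corresponding full-probability events over the finitely many indices $k=1,\dots,N+1$ yields the claim.

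The only step I expect to require some care is this last gluing argument, namely ensuring that the continuous modifications constructed separately on the pieces $I_k$ can be realized simultaneously as restrictions of one and the same process. This is precisely why I would pass through the already-established c\`adl\`ag modification $X$: right-continuity forces $X$ and each $Y^{(k)}$ to be indistinguishable on $I_k$, so no genuine compatibility issue remains and the finitely many exceptional null sets can be discarded at once. Everything else (the Gaussian moment identity, the choice of $p_k$, and the exhaustion of the half-open interval $I_k$ by compact subintervals) is routine.
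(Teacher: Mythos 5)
Your proof is correct and takes essentially the same route as the paper: the explicit Gaussian moment bound $E|B_W(t)-B_W(s)|^{2p}=(2p-1)!!\,|W(t)-W(s)|^p\le (2p-1)!!\,L_k^p|t-s|^{p\gamma_k}$ with $p_k\gamma_k>1$ is exactly the input to Kolmogorov--Chentsov on each $I_k$ that the paper invokes (though the paper leaves it implicit). The only divergence is the patching step, where the paper simply defines the glued modification piecewise as $\widetilde{B}_W(s)=B_{W,k}(s)$ for $s\in I_k$ (using that the $I_k$ are disjoint), whereas you route through the c\`adl\`ag modification of Corollary \ref{cor:conseFeller} and use right-continuity to upgrade almost-sure equality on a countable dense set to indistinguishability; both are valid, and yours has the mild advantage of exhibiting the continuity on each $I_k$ as a property of the single already-constructed c\`adl\`ag version.
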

\begin{proof}
	In this case, we can directly apply Kolmogorov-Chentsov's continuity theorem to the restriction of $B_W$ to each interval $I_k$ to obtain that the restriction of $B_W$ to each $I_k$ has a modification, say $B_{W,k}$, which has continuous sample paths, for $k=1,\ldots, N+1$. Notice that from the very construction, the modifications belong to the same probability space. Therefore, we can simply define the modification $\widetilde{B}_W$ on $[0,1]$ as $\widetilde{B}_W(s) = B_{W,k}(s)$ is $s\in I_k$, $k=1,\ldots, N+1$, and since the intervals $I_k$ are disjoint, there are no overlaps.
\end{proof}

\begin{remark}
	It is important to notice that if we only assume $W$ to have finitely many discontinuity points, and impose no assumptions regarding H\"older continuity, we would only have that the restriction of $B_W$ to each continuity subinterval is continuous in probability, which implies that the restriction of $B_W$ to each subinterval is an additive process in law, and thus admits a version which is right-continuous. However it is not strong enough to ensure the existence of a modification with continuous sample paths. Thus, we would not be able to conclude that $B_W$ would be continuous in each interval such that $W$ is continuous. 
\end{remark}

Notice that, almost surely, the sample paths of $B_W(\cdot)$ are bounded on $[0,1]$, since it is c\`adl\`ag on $[0,1]$. Therefore,
it is an $L_W^2(\mathbb{T})$-process (see Remark \ref{remark_L2_v}). Furthermore, $B_W(\cdot)$ has orthogonal increments and is $L^2$-right-continuous, that is $$\lim_{t\to s+} \mathbb{E}((B_W(t)-B_W(s))^2) =0.$$ Therefore, the definition of the stochastic integral of any function $f\in L^2_W(\mathbb{T})$ with respect to $B_W(\cdot)$ follows from the standard theory on $L^2$-processes (see \cite{ash}). Namely, the stochastic integral is linear, 
\begin{equation}
	\int_{[0,1)} \textbf{1}_{(a,b]} dB_W = B_W(b) - B_W(a),
	\label{intsimplefunctions}
\end{equation}
and, for any $f\in L^2_W(\mathbb{T})$, we have the following isometry:
\begin{equation}
	\mathbb{E}\left[ \left( \int_{[0,1)} f dB_W\right)^2\right] = \int_{[0,1)} f^2 dW.
	\label{isometry}
\end{equation}
Now, observe that, by \eqref{intsimplefunctions}, the integral of each simple function follows a normal distribution. By \eqref{isometry}, the stochastic integral with respect to $B_W$ is an $L^2$-limit of Gaussian random variables, and thus, it is a Gaussian random variable (see \cite[Theorem 1.4.2 ]{ash}). 

Since, for each simple function $\varphi$, we have $\mathbb{E}\left( \int_{[0,1)} \varphi dB_W \right) =0$, it follows from \eqref{isometry} that the stochastic integral with respect to any deterministic function $f\in L^2_W(\mathbb{T})$ is zero. Furthermore, since the stochastic integral with respect to deterministic functions is a mean zero Gaussian random variable, it follows from \eqref{isometry}, again, that for any $f\in L^2_W(\mathbb{T})$,
\begin{equation}
	\int_{[0,1)} f dB_W \sim N\left(0, \int_{[0,1)} f^2 dW\right).
	\label{diststochint}
\end{equation}

We can now show that the integration by parts formula can be applied for any function in $H_{W,V}(\mathbb{T})\cap L_W^2(\mathbb{T})$ and with the derivative, being the weak derivative $D_W^-$:

\begin{proposition}\label{pathintegral}
	Let $B_W(\cdot)$ be a $W$-Brownian motion. For any function $g\in H_{W,V}(\mathbb{T})\cap L^2_W(\mathbb{T})$ we have the following integration by parts formula:
	\begin{equation}\label{stochintpath}
		\int_{(0,t]} g dB_W = B_W(t)g(t) - \int_{(0,t]} B_W(s-) D_W^-{g}(s) dW(s).
	\end{equation}
\end{proposition}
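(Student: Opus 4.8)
The plan is to establish the integration-by-parts formula \eqref{stochintpath} first for simple functions, where it reduces to a telescoping (Abel summation) identity using the defining property \eqref{intsimplefunctions}, and then pass to the limit using the isometry \eqref{isometry} together with the fact that $B_W(\cdot)$ has càdlàg (hence bounded, hence $dW$-integrable) sample paths on $[0,1]$. Recall from Theorem \ref{sobchar} that $g\in H_{W,V}(\mathbb{T})$ means $g(x) = c + \int_{(0,x]} D_W^- g\, dW$ with $D_W^- g \in L^2_{W,0}(\mathbb{T})$, so $D_W^- g$ is the natural integrand on the right-hand side.

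First I would prove the formula for $g = \mathbf{1}_{(a,b]}$ with $0 < a < b \le t$. Here $D_W^- g$ is (a.e. $dW$) a signed combination of point masses at $a$ and $b$; more precisely one checks directly from \eqref{intsimplefunctions} that $\int_{(0,t]} \mathbf{1}_{(a,b]} dB_W = B_W(b) - B_W(a)$, while the right-hand side of \eqref{stochintpath} must be computed by interpreting $\int_{(0,t]} B_W(s-)\, d[\mathbf{1}_{(a,b]}](s)$ appropriately — and the cleaner route is to verify \eqref{stochintpath} for step functions $g = \sum_{k} c_k \mathbf{1}_{(t_{k-1},t_k]}$ adapted to a partition $0 = t_0 < t_1 < \cdots < t_n = t$, where Abel summation gives
$$\sum_{k=1}^n c_k\bigl(B_W(t_k) - B_W(t_{k-1})\bigr) = c_n B_W(t_n) - \sum_{k=1}^{n-1} B_W(t_k)(c_{k+1} - c_k),$$
and the sum on the right is exactly $-\int_{(0,t]} B_W(s-)\, dg(s)$ while $c_n B_W(t_n) = B_W(t) g(t)$ when $g$ is left-continuous with $g(t) = c_n$. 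For such piecewise-constant $g$ one then rewrites $\int_{(0,t]} B_W(s-)\, dg(s)$ as $\int_{(0,t]} B_W(s-) D_W^- g(s)\, dW(s)$ using the fact that $g(x) = g(0) + \int_{(0,x]} D_W^- g\, dW$; note $g(0)$ contributes nothing since $dg$ has no mass there, and the telescoping structure of a step function means its $D_W^-$ with respect to $W$ is a sum of the appropriate "slopes" supported on the atoms of $W$ at the jump points — so the two expressions agree.

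Next I would take a general $g \in H_{W,V}(\mathbb{T}) \cap L^2_W(\mathbb{T})$ and approximate. By Corollary \ref{cr:1} (density of $\{D_W^- f : f \in \mathfrak{D}_{W,V}\}$ in $L^2_{W,0}(\mathbb{T})$) together with the fact that step functions with mean zero are dense in $L^2_{W,0}(\mathbb{T})$, I can choose step functions $G_n \to D_W^- g$ in $L^2_W(\mathbb{T})$ with $\int_{\mathbb{T}} G_n\, dW = 0$, and set $g_n(x) = g(0) + \int_{(0,x]} G_n\, dW$, so that $g_n \to g$ pointwise (indeed uniformly, by Cauchy–Schwarz) and $D_W^- g_n = G_n \to D_W^- g$ in $L^2_W(\mathbb{T})$. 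Applying the isometry \eqref{isometry} to $g_n - g$ shows $\int_{(0,t]} g_n\, dB_W \to \int_{(0,t]} g\, dB_W$ in $L^2$. For the boundary term, $g_n(t) \to g(t)$ and $B_W(t)$ is a fixed random variable, so $B_W(t) g_n(t) \to B_W(t) g(t)$ almost surely. For the remaining integral, on a fixed sample path $B_W(\cdot)$ is càdlàg hence bounded on $[0,1]$, say by $M(\omega)$, so
$$\left| \int_{(0,t]} B_W(s-)\bigl(D_W^- g_n(s) - D_W^- g(s)\bigr)\, dW(s) \right| \le M(\omega)\, \|G_n - D_W^- g\|_{L^1_W(\mathbb{T})} \le M(\omega)\sqrt{W(1)}\, \|G_n - D_W^- g\|_W \to 0$$
almost surely. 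Passing to a subsequence along which the $L^2$ convergence of the stochastic integrals holds almost surely, all three terms converge and \eqref{stochintpath} is preserved in the limit.

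The main obstacle I anticipate is the bookkeeping in the step-function case: one must be careful that $\int_{(0,t]} B_W(s-)\, dg(s)$, the Lebesgue–Stieltjes integral against the (signed, atomic) measure $dg$, genuinely equals $\int_{(0,t]} B_W(s-) D_W^- g(s)\, dW(s)$ — this hinges on the identity $g(x) = g(0) + \int_{(0,x]} D_W^- g\, dW$ from Theorem \ref{sobchar}, which says precisely that $dg = (D_W^- g)\, dW$ as measures on $(0,1]$, so the substitution is legitimate; and on the left-continuity conventions (the integrand $B_W(s-)$ versus $B_W(s)$, and whether the jump of $g$ at an atom of $W$ is "seen" by $D_W^-$ at that point). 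A secondary technical point is ensuring the approximating $g_n$ lie in $L^2_W(\mathbb{T})$ and that $g_n(0) = g(0)$, both of which are automatic from the construction above. Once the discrete identity is pinned down, the limiting argument is routine.
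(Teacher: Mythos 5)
Your overall strategy (an Abel-summation base case followed by a density-plus-isometry limit) is genuinely different from the paper's proof, which simply substitutes $g(s)=g(0)+\int_{(0,s]}D_W^-g\,dW$ from Theorem \ref{sobchar} into the stochastic integral and applies the stochastic Fubini theorem to swap $dW$ and $dB_W$ --- a two-line argument. Your limiting step (isometry for the stochastic integrals, pathwise boundedness of the c\`adl\`ag sample paths for the correction term) is sound. But the discrete part has two genuine gaps. First, the class for which you prove the base case does not match the class you approximate with: your approximants are $g_n=g(0)+\int_{(0,\cdot]}G_n\,dW$ with $G_n$ a step function, i.e.\ piecewise $W$-affine functions, \emph{not} step functions. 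The identity you verify for $g=\sum_k c_k\mathbf{1}_{(t_{k-1},t_k]}$ is therefore never used, and the identity you actually need for the $g_n$ (which amounts to a discrete Fubini interchange) is not proved. Note also that a step function whose jumps occur at continuity points of $W$ does not lie in $H_{W,V}(\mathbb{T})$ at all (no $F\in L^2_{W,0}(\mathbb{T})$ makes $\int_{(0,\cdot]}F\,dW$ jump there), so $D_W^-g$ is undefined for generic step functions and the right-hand side of \eqref{stochintpath} is meaningless for them.

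Second, the convention issue you defer as ``bookkeeping'' is exactly where the computation goes wrong. With your c\`agl\`ad blocks $\mathbf{1}_{(t_{k-1},t_k]}$, Abel summation produces the remainder $\sum_{k}(c_{k+1}-c_k)B_W(t_k)$, whereas $\int_{(0,t]}B_W(s-)\,D_W^-g\,dW$, computed for the c\`adl\`ag representative (whose jumps must sit on atoms of $W$), equals $\sum_k (c_{k+1}-c_k)B_W(t_k-)$. These differ by $\sum_k(c_{k+1}-c_k)\bigl(B_W(t_k)-B_W(t_k-)\bigr)$, which is almost surely nonzero whenever $W$ has an atom at a jump point of $g$ --- precisely the atomic setting this paper is built for. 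The source of the mismatch is that the c\`agl\`ad and c\`adl\`ag representatives of $g$ differ on $W$-atoms, which have positive $W$-measure, so they have \emph{different} stochastic integrals; one must fix the c\`adl\`ag representative $c+\int_{(0,\cdot]}D_W^-g\,dW$ throughout and redo the summation with $\mathbf{1}_{[t_{k-1},t_k)}$-type blocks, using $\int_{(0,t]}\mathbf{1}_{[a,t]}\,dB_W=B_W(t)-B_W(a-)$. Until that is pinned down, your base case establishes a different (and, at atoms of $W$, incorrect) identity. The paper's stochastic-Fubini route avoids all of this.
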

\begin{proof}
	Let $g\in H_{W,V}(\mathbb{T})$. From (an immediate adaptation of) Theorem \ref{sobchar} and stochastic Fubini's theorem (see \cite[Theorem 64, p.210]{protter}), we have
	\begin{align*}
		\int_{(0,t]} g dB_W &= \int_{(0,t]} \left( g(0) + \int_{(0,s]} D_W^-g(u)dW(u)\right) dB_W(s)\\
%		&= g(0) B_W(t) + \int_{(0,t]} D_W^-g(u)\left( \int_{[u,t]} dB_W(s)\right) dW(u)\\
		% &= g(0) B_W(t) + \int_{(0,t]} D_W^-g(u) (B_W(t) - B_W(u-)) dW(u)\\
		&= g(0) B_W(t) + B_W(t) (g(t) - g(0)) - \int_{(0,t]} D_W^-g(u) B_W(u-)dW(u)\\
		&= B_W(t)g(t) - \int_{(0,t]} D_W^-g(u) B_W(u-)dW(u).
	\end{align*}
\end{proof}

Notice that since $B_W(\cdot)$ has c\`adl\`ag sample paths, $B_W(\cdot)$ is almost surely bounded on $[0,1]$, so the right-hand side of \eqref{stochintpath} is well-defined for any function $g\in H_{W,V}(\mathbb{T})$ (even if it is not in $L_W^2(\mathbb{T})$). Proposition \ref{pathintegral} thus motivates the following definition:

\begin{definition}\label{def:stochintHWV}
	Let $g\in H_{W,V}(\mathbb{T})$, the stochastic integral of $g$ with respect to $B_W(\cdot)$ is defined as
	$$\int_{(0,t]} g dB_W := B_W(t)g(t) - \int_{(0,t]} D_W^-g(u) B_W(u-)dW(u).$$
\end{definition}

\section{Applications to stochastic differential equations}\label{sect8}

We will now apply the theory developed in the previous sections of this paper to solve a class of stochastic partial differential
equations that generalize a non-fractional case of the well-known Matérn equations on a domain with periodic boundary conditions. 

For this section, we will work on $H_{W,V,\mathcal{D}}(\mathbb{T})$ with a tagged zero and, furthermore, recall that we assumed that $W$ is continuous at $x=0$. Further, for simplicity, we will assume that $W(0)=0$. Therefore, from Theorem \ref{sobchar}, we have that for any function $g\in H_{W,V}(\mathbb{T})$,
$$\lim_{x\to 1-} g(x) = g(0) = \lim_{x\to 0+} g(x).$$
This means that if $g\in H_{W,V,\mathcal{D}}(\mathbb{T}),$
$$\lim_{x\to 1-} g(x) = 0 = \lim_{x\to 0+} g(x).$$
Next, we identify $\mathbb{T} = [0,1)$, so that, from Definition \ref{W-brownian}, we can define the $W$-Brownian motion $B_W(\cdot)$ on $\mathbb{T}$ as its restriction to $[0,1)$. Therefore, it follows from Definition \ref{def:stochintHWV} that under these assumptions, for any $g\in H_{W,V,\mathcal{D}}(\mathbb{T}),$ we
have
	$$\int_{[0,1)} g(s) dB_W(s) = - \int_{[0,1)} B_W(s-) D_W^-{g}(s) dW(s).$$
	So that
	\begin{eqnarray}
		\left|\int_{[0,1)} g(s) dB_W(s)\right| &\leq& \int_{[0,1)} |B_W(s) D_W^-{g}(s)| dW(s)\nonumber \leq \|B_W\|_{L^2_W([0,1))} \|D_W^{-}g\|_{L^2_W(\mathbb{T})}\nonumber\\
		&\leq& \left(\sup_{t\in [0,1)}\|B_W(t)\|\right) W(1) \|g\|_{H_{W,V}(\mathbb{T})}.\label{functionalwhite}
	\end{eqnarray}

	So, the stochastic integral defines, almost surely, a bounded linear functional on $H_{W,V,\mathcal{D}}(\mathbb{T})$.
	We can now define the pathwise $W$-gaussian white noise on the Cameron-Martin space $H_{W,V,\mathcal{D}}(\mathbb{T})$.

\begin{definition}\label{pathwhitenoise}
	We define the pathwise $W$-gaussian white noise as the functional 
    $\dot{B}_W \in H^{-1}_{W,V,\mathcal{D}}(\mathbb{T}):=(H_{W,V,\mathcal{D}}(\mathbb{T}))^\ast$ given by
	\begin{equation}\label{bponto}
		\dot{B}_W(g) = -\int_{\mathbb{T}} B_W(s-) D_W^-g(s) dW(s).
	\end{equation}
\end{definition}

\begin{remark}
	Observe that the expression in the right-hand side of \eqref{bponto} is, almost surely, well-defined in the pathwise sense, as it is an integral with respect to the measure induced by $W$, and $B_W(\cdot)$, almost surely, has c\`adl\`ag sample paths.
\end{remark}

We will now study the existence and uniqueness of weak solutions of the following stochastic partial differential equation:
\begin{equation}\label{elliptic}
	\kappa^2 u - D_V^+ (H D_W^- u) = \dot{B}_W
\end{equation}
on the space $H_{W,V,\mathcal{D}}(\mathbb{T})$.

It is important to notice that equation \eqref{elliptic} can be seen as $W$-generalized counterpart of a special non-fractional case of the well-known Whittle--Matérn stochastic partial differential equation, e.g., \cite{lindgren2011explicit}.

\begin{remark}
    The fact that we can construct $\dot{B}_W$ relative to $L_V^2(\mathbb{T})$ for any strictly increasing function $V:\mathbb{R}\to\mathbb{R}$ satisfying \eqref{periodic} is essential due to the fact that $V$ is completely determined by the original problem in which the differential operator $\kappa^2 I - D_V^+ H D_W^-$ was modeled upon. See, for instance, \cite[Theorem 15.7]{dynkin1965markov} as an example in which some particular cases of $V$ and $W$ arise naturally.
\end{remark}

Let us now, provide the definition of weak solution of the above equations:

\begin{definition}\label{weaksolmatern}
	We say that $u\in H_{W,V,\mathcal{D}}(\mathbb{T})$ is a weak solution of equation \eqref{elliptic} if for every $g\in H_{W,V,\mathcal{D}}(\mathbb{T})$, the following identity is true:
	$$\int_{\mathbb{T}} \kappa^2 u g dV + \int_{\mathbb{T}} HD_W^- u D_W^-g dW = \int_{[0,1)} gdB_W,$$
	or, equivalently, in terms of functionals, if the following identity is true:
	$$B_{L_{W,V}}(u, g) = \dot{B}_W(g),$$
	where $B_{L_{W,V}}$ is the bilinear functional given by $$\int_{\mathbb{T}} \kappa^2 u g dV + \int_{\mathbb{T}} HD_W^- u D_W^-g dW$$
	for $u,g\in H_{W,V,\mathcal{D}}(\mathbb{T})$ and $\dot{B}_W$ is the pathwise $W$-Gaussian white noise defined in (\ref{bponto}).
\end{definition}

By comparing with the solutions of the standard Whittle--Mat\'ern equation, we can readily see that equation \eqref{pathwhitenoise} can be suitable for modelling situations in which the process finds barriers, as well as, having eventually non-diffusive behavior throughout the domain.

We, then, have the following theorem on existence and uniqueness of solutions of \eqref{elliptic}:

\begin{theorem}\label{exisuniqellipt}
	Let $\kappa$ and $H$ be bounded functions that are also bounded away from zero, where $H$ is positive. Then, equation \eqref{elliptic} has, almost surely, a unique solution with c\`adl\`ag sample paths.
\end{theorem}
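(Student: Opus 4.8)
The plan is to solve \eqref{elliptic} pathwise by a direct application of the Lax--Milgram theorem on the Hilbert space $(H_{W,V,\mathcal{D}}(\mathbb{T}),\<\cdot,\cdot\>_{W,V,\mathcal{D}})$, and then to check that the resulting object is a genuine measurable random field whose sample paths are c\`adl\`ag.

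First I would verify that the bilinear form $B_{L_{W,V}}$ of Definition \ref{weaksolmatern} is bounded and coercive on $H_{W,V,\mathcal{D}}(\mathbb{T})$. For boundedness, argue as in Proposition \ref{energyest}: by H\"older's inequality, $0<H\le\sup_{\mathbb{T}}H$ and $|\kappa|\le\sup_{\mathbb{T}}|\kappa|$, one has $|B_{L_{W,V}}(u,g)|\le (\sup_{\mathbb{T}}H)\|D_W^-u\|_W\|D_W^-g\|_W+(\sup_{\mathbb{T}}\kappa^2)\|u\|_V\|g\|_V$, and since every $g\in H_{W,V,\mathcal{D}}(\mathbb{T})$ satisfies $|g(x)|=|\int_{(0,x]}D_W^-g\,dW|\le W(1)^{1/2}\|D_W^-g\|_W$ (equivalently, one may invoke \eqref{i1} directly), we get $\|g\|_V\le (V(1)W(1))^{1/2}\|g\|_{W,V,\mathcal{D}}$; hence $B_{L_{W,V}}$ is bounded by some $\alpha>0$ in the Dirichlet norm. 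For coercivity, use that $H$ is bounded away from zero, say $H\ge H_0>0$, together with $\kappa^2\ge 0$, to obtain
$$B_{L_{W,V}}(u,u)=\int_{\mathbb{T}}\kappa^2 u^2\,dV+\int_{\mathbb{T}}H(D_W^-u)^2\,dW\ \ge\ H_0\int_{\mathbb{T}}(D_W^-u)^2\,dW\ =\ H_0\,\|u\|_{W,V,\mathcal{D}}^2,$$
so coercivity on the Dirichlet space in fact uses only $\kappa^2\ge0$ and the lower bound on $H$.

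Next, by the estimate \eqref{functionalwhite}, on the almost sure event that $B_W(\cdot)$ has c\`adl\`ag (hence bounded) sample paths on $[0,1]$, the pathwise $W$-Gaussian white noise $\dot B_W$ of Definition \ref{pathwhitenoise} is a bounded linear functional on $H_{W,V,\mathcal{D}}(\mathbb{T})$, with norm at most $\big(\sup_{t\in[0,1]}\|B_W(t)\|\big)W(1)<\infty$. Thus, on that event, the Lax--Milgram theorem produces a unique $u=u(\omega)\in H_{W,V,\mathcal{D}}(\mathbb{T})$ with $B_{L_{W,V}}(u,g)=\dot B_W(g)$ for all $g\in H_{W,V,\mathcal{D}}(\mathbb{T})$, i.e.\ a unique weak solution in the sense of Definition \ref{weaksolmatern}, satisfying moreover $\|u(\omega)\|_{W,V,\mathcal{D}}\le H_0^{-1}\|\dot B_W(\omega)\|$.

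Finally, to see that $\omega\mapsto u(\omega)$ is a bona fide random field with c\`adl\`ag paths: the Lax--Milgram solution map $T:(H_{W,V,\mathcal{D}}(\mathbb{T}))^\ast\to H_{W,V,\mathcal{D}}(\mathbb{T})$, $\dot B_W\mapsto u$, is a bounded linear operator (with $\|T\|\le H_0^{-1}$), and $\omega\mapsto\dot B_W(\omega)$ is measurable into $H^{-1}_{W,V,\mathcal{D}}(\mathbb{T})$ since for each fixed $g$ the map $\omega\mapsto-\int_{\mathbb{T}}B_W(s-)D_W^-g(s)\,dW(s)$ is measurable and $H_{W,V,\mathcal{D}}(\mathbb{T})$ is separable (Pettis); hence $u=T\dot B_W$ is a measurable $H_{W,V,\mathcal{D}}(\mathbb{T})$-valued random variable. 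Since $u(\omega)\in H_{W,V,\mathcal{D}}(\mathbb{T})\subset H_{W,V}(\mathbb{T})$, Theorem \ref{sobchar} yields $u(\omega,x)=c(\omega)+\int_{(0,x]}D_W^-u(\omega,s)\,dW(s)$ for $V$-a.e.\ $x$, and the right-hand side is c\`adl\`ag in $x$; taking this canonical representative shows $u$ has c\`adl\`ag sample paths, completing the proof. I expect the main obstacle to be not the existence/uniqueness itself, which is an immediate Lax--Milgram argument once coercivity and \eqref{functionalwhite} are in hand, but the bookkeeping of passing from the pathwise construction to a jointly measurable random field and selecting the c\`adl\`ag modification consistently, which is precisely what the boundedness and linearity of $T$ deliver.
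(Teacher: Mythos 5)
Your proposal is correct and follows essentially the same route as the paper: the estimate \eqref{functionalwhite} puts $\dot B_W$ almost surely in $H^{-1}_{W,V,\mathcal{D}}(\mathbb{T})$, Lax--Milgram gives pathwise existence and uniqueness, and Theorem \ref{sobchar} supplies the c\`adl\`ag representative. Your write-up is in fact more careful than the paper's three-line proof, since you explicitly verify coercivity of $B_{L_{W,V}}$ in the Dirichlet norm (using only $H\ge H_0$ and $\kappa^2\ge 0$) and address measurability of $\omega\mapsto u(\omega)$ via the boundedness and linearity of the solution operator, points the paper leaves implicit.
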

\begin{proof}
	From \eqref{functionalwhite}, $\dot{B}_W$ is, almost surely, an element of $H^{-1}_{W,V,\mathcal{D}}(\mathbb{T})$. Therefore, from Lax-Milgram theorem, \eqref{elliptic} has, almost surely, a unique solution $u\in H_{W,V,\mathcal{D}}(\mathbb{T})$. Finally, from Theorem \ref{sobchar}, $u$ has, almost surely, c\`adl\`ag sample paths. 
\end{proof}

% \section*{Acknowledgments}
% We would like to thank to reviewers for their valuable contribution that helped us improve the overall quality of the paper.
\section*{Declarations}

\begin{itemize}
	\item \textbf{Founding}: Research Supported by CAPES (grant No. 88882.440725/2019-01)
	\item \textbf{Availability of Technical Matherials} : Not applicable
	\item \textbf{Ethical approval}: Not applicable
\end{itemize}

%%===========================================================================================%%
%% If you are submitting to one of the Nature Portfolio journals, using the eJP submission   %%
%% system, please include the references within the manuscript file itself. You may do this  %%
%% by copying the reference list from your .bbl file, paste it into the main manuscript .tex %%
%% file, and delete the associated \verb+\bibliography+ commands.                            %%
%%===========================================================================================%%
%\bibliographystyle{sn-apa}
%\bibliographystyle{sn-basic}
\bibliography{sn-bibliography}% common bib file
%% if required, the content of .bbl file can be included here once bbl is generated
%%\input sn-article.bbl

\end{document}